\newtheorem{ass}{Assumption}
\newtheorem{theo}{Theorem}
\newtheorem{proposition}{Proposition}
\newtheorem{lemma}{Lemma}
\newcommand \cqfd{$\qquad \square$}
\numberwithin{equation}{section}
\numberwithin{ass}{section}
\numberwithin{theo}{section} \numberwithin{proposition}{section}
\numberwithin{lemma}{section}
\newcommand{\re}{\Re\mathrm{e}} % partie réelle
\def\R{\mathbb{ R}}
\def\Z{\mathbb{ Z}}
\newcommand{\PP}{\mathbb {P}}
\newcommand{\E}{\mathbb{E}}
\newcommand{\btau}{\boldsymbol{\tau}}
\newcommand{\N}{\mathbb{N}}
\newcommand{\var}{\text{Var}}
\newcommand{\opO}{\ensuremath{\mathcal O}}
\newcommand{\thefont}[2]{\fontsize{#1}{#2}\fontshape{n}\selectfont}
\newcommand{\1}{\rlap{\thefont{10pt}{12pt}1}\kern.16em\rlap{\thefont{11pt}{13.2pt}1}\kern.4em}
\title{Intensity estimation of non-homogeneous Poisson processes from shifted trajectories}
\author{Jérémie Bigot, Sébastien Gadat, Thierry Klein and Clément Marteau \vspace{0.2cm}  \\
Institut de Math\'ematiques de Toulouse\\
Universit\'e de Toulouse et CNRS (UMR 5219)\\
31062 Toulouse, Cedex 9, France\\
{\small {\tt \{Jeremie.Bigot, Sebastien.Gadat, Thierry.Klein, Clement.Marteau\}@math.univ-toulouse.fr }}
 \vspace{0.2cm}}
\begin{document}
\maketitle

\begin{abstract}
This paper considers the problem of adaptive estimation of a non-homogeneous intensity function from the observation of $n$ independent Poisson processes having a common intensity that is randomly shifted for each observed trajectory. We show that estimating this intensity  is a deconvolution problem for which the density of the random shifts plays the role of the convolution operator. In an asymptotic setting where the number $n$ of observed trajectories tends to infinity, we derive upper and lower bounds for the minimax quadratic risk over Besov balls. Non-linear thresholding in a Meyer wavelet basis is used to derive an adaptive estimator of the   intensity. The proposed estimator is shown to achieve a near-minimax rate of convergence.  This rate depends both on the smoothness of the intensity function and the density of the random shifts, which makes a connection between the classical deconvolution problem in nonparametric statistics and the estimation of a mean intensity from the observations of independent Poisson processes.
\end{abstract}

\noindent \emph{Keywords:} Poisson processes, Random shifts, Intensity estimation, Deconvolution, Meyer wavelets, Adaptive estimation, Besov space, Minimax rate.

\noindent\emph{AMS classifications:} Primary 62G08; secondary 42C40

\subsubsection*{Acknowledgements}

The authors acknowledge the support of the French Agence Nationale de la Recherche (ANR) under reference ANR-JCJC-SIMI1 DEMOS.

%%%%%%%%%%%%%
%\input{intro.tex}
%%%%%%%%%%%%%

\section{Introduction}

Poisson processes became intensively studied in the statistical theory during the last decades. Such processes are well suited to model a large amount of phenomena. In particular, they are used in various applied fields including genomics, biology and imaging. In this paper, we consider the problem of estimating nonparametrically a mean pattern intensity $\lambda$ from the observation of $n$ independent and non-homogeneous Poisson processes $N^1,\dots,N^n$ on the interval $[0,1]$. This problem arises when data (counts) are collected independently from $n$ individuals according to similar Poisson processes. In many applications, such data can be modeled as independent Poisson processes whose non-homogeneous intensities have a common shape. A simple model, that is well studied for genomics applications  \cite{superman}, is to assume that the intensity functions $\lambda_{1},\ldots,\lambda_{n}$ of the Poisson processes $N^1,\dots,N^n$ are randomly shifted versions $\lambda_{i}(\cdot) = \lambda(\cdot - \btau_{i})$ of a common intensity $\lambda$, where $\btau_{1},\ldots,\btau_{n}$ are i.i.d.\ random variables. The intensity $\lambda$ that we want to estimate is thus the same for all the observed processes up to random translations.  Basically, such a model corresponds to the assumption that the recording of counts does not start at the same time (or location) from one individual to another, e.g.\ when reading DNA sequences from different subjects in genomics   \cite{bioman}. 

% By the same way, some neurobiologist study the response of the human brain to a collection of signals and then count the number of such response. This number, modeled by a Poisson process, is considered as the same for every sane human except that the time of reaction is not always the same due to some foreign factors (age, blood alcohol level, ...).\\

In more rigorous terms, let  $\btau_{1},\ldots,\btau_{n}$ be i.i.d.\ random variables  with known density $g$ with respect to the Lebesgue measure on $\R$. Let $\lambda : [0,1] \to \R_{+}$ a real-valued function. Throughout the paper, it is assumed that $\lambda$  can be extended outside $[0,1]$ by periodization i.e.\ by taking $\lambda(t) = \lambda(t \mod 1)$ for $t \notin [0,1]$, where $t \mod 1$ denotes the modulo operation. We suppose that, conditionally to $\btau_{1},\ldots,\btau_{n}$, the point processes $N^1,\dots,N^n$ are independent Poisson processes on the measure space $([0,1],\mathcal{B}([0,1]),dt)$ with intensities $\lambda_{i}(t) = \lambda(t - \btau_{i})$ for $t \in [0,1]$, where $dt$ is the Lebesgue measure. Hence, conditionally to $\btau_{i}$, $N^i$ is a random countable set of points in $[0,1]$, and we denote by $d N^{i}_{t} = d N^{i}(t)$ the discrete random measure $ \sum_{T \in N^{i}}  \delta_{T}(t)$ for $t \in [0,1]$, where $\delta_{T}$ is the Dirac measure at point $T$. In other terms, conditionally to $\btau_{1},\ldots,\btau_{n}$, one has that for any  set $A \in \mathcal{B}([0,1])$ and for each $1 \leq i \leq n$, the number of points of $N^i$ lying in $A$ is a random variable $N^i(A) = \int_A d N^i_{t} = \int_A d N^i(t) $ which is Poisson distributed with parameter $\int_A \lambda(t - \btau_{i}) dt$. Moreover, for all finite family of disjoint measurable sets $A_1,\dots, A_p$ of $\mathcal{B}([0,1])$, the random variables $N^i(A_1), \dots, N^i(A_p), i =1\ldots,n$ are independent. For an introduction to non-homogeneous Poisson processes we refer to \cite{poisson-procs}. The objective of this paper is to study the estimation of $\lambda$ from a minimax point of view as the number $n$ of observed Poisson processes tends to infinity.

Denote by $\| \lambda \|^{2}_{2} = \int_{0}^{1} | \lambda(t)|^{2}dt$ the squared norm of a function $ \lambda $ belonging to the space $L^{2}([0,1])$ of squared integrable functions on $[0,1]$ with respect to $dt$. Let $\Lambda \subset L^{2}([0,1])$ be some smoothness class of functions, and let $\hat{\lambda}_{n} \in L^{2}([0,1])$ denote an estimator of the intensity function $\lambda \in \Lambda$, i.e a measurable mapping of the random processes $N^{i}, \; i=1,\ldots,n$ taking its value in $L^{2}([0,1])$.   Define the quadratic risk of the estimator $\hat{\lambda}_{n}$ as
$$
\mathcal{R}(\hat{\lambda}_{n},\lambda) = \E \|\hat{\lambda}_{n} - \lambda\|^{2}_{2},
$$
and introduce the following minimax risk
$$
\mathcal{R}_{n}(\Lambda) = \inf_{\hat{\lambda}_{n}} \sup_{\lambda \in \Lambda} \mathcal{R}(\hat{\lambda}_{n}, \lambda),
$$
where the above infimum is taken over the set of all possible estimators constructed from $N^1,\dots,N^n$. In order to investigate the optimality of an estimator, the main contribution of this paper is to derive upper and lower bounds for $\mathcal{R}_{n}(\Lambda)$ when $\Lambda $ is a Besov ball, and the construction of an adaptive estimator that achieves a near-minimax rate of convergence.

The estimation of the intensity of non-homogeneous Poisson process  has recently attracted a lot of attention in nonparametric statistics. In particular the problem of estimating a Poisson intensity from a single trajectory has been studied using model selection techniques \cite{Reynaud} and non-linear wavelet thresholding \cite{MR1268002},  \cite{MR1678884}, \cite{Reynaud2}, \cite{MR2417680}. Adopting an inverse problem point of view, estimating the intensity function of an indirectly observed non-homogeneous Poisson process has been considered by   \cite{MR2291495}, \cite{MR1930348}, \cite{MR1790322}. Poisson noise removal has also been considered  by \cite{MR2649451}, \cite{MR2516691} for image processing applications. Deriving optimal estimators of a Poisson intensity using a minimax point of view has been considered in \cite{MR1930348},   \cite{Reynaud}, \cite{Reynaud2} \cite{MR2417680}, in the setting where the intensity of the observed process $\lambda(t) = \kappa \lambda_{0}(t)$ such that the function to estimate is the scaled intensity $\lambda_{0}$ and $\kappa$ is a positive real, representing an ``observation time'', that is let going to infinity to study asymptotic properties.

In this paper, since we observe $n$ independent Poisson processes, we adopt a different asymptotic setting where $n$ tends to infinity. In this framework, our main result is that estimating $\lambda$ corresponds to a deconvolution problem where the density $g$ of the random shifts $\btau_{1},\ldots,\btau_{n}$ is a convolution operator that has to be inverted. Hence, estimating $\lambda$ falls into the category of Poisson inverse problems. A related model of randomly shifted curves observed with Gaussian noise has been considered by \cite{MR2676894} and \cite{MR2727451}. The results in \cite{MR2676894} show that  estimating a mean shape curve in such models is a deconvolution problem. However, to the best of our knowledge, the case of estimating a mean intensity from randomly shifted trajectories in the case of a Poisson noise has not been considered before. The presence of the random shifts significantly complicates the construction of upper and lower bounds for the minimax risk. In particular, to derive a lower bound, standard methods such as Assouad's cube technique  that is widely used for standard deconvolution problems in a white noise model (see e.g.\ \cite{MR2488345} and references therein) have to be carefully adapted to  take into account the effect of the random shifts.

The rest of the paper is organized as follows. In Section \ref{sec:nonadapt}, we describe an inverse problem formulation for the estimation of $\lambda$, and a linear but nonadaptive estimator of the intensity is proposed. Section \ref{sec:adapt} is devoted to adaptive estimation using non-linear Meyer wavelet thresholding, and to the construction of an upper bound on the minimax risk over Besov balls. In Section \ref{sec:lowerbound} a lower bound on the minimax risk is derived. % The proofs and technical lemmas are all gathered in Section 5.

%%%%%%%%%%%%%
%\input{bornesup.tex}
%%%%%%%%%%%%%

\section{Linear estimation} \label{sec:nonadapt}

\subsection{Inverse problem formulation}
 For each observed counting process,   the presence of a random shift complicates the estimation of the intensity $\lambda$. Indeed, for all $i \in \lbrace 1,\dots, n \rbrace$ and any $f \in L^{2}([0,1])$ we have
\begin{equation}
\mathbb{E} \left[ \int_0^1 f(t)dN_t^i \big| \btau_i \right]= \int_0^1 f(t)\lambda(t-\btau_i)dt,
\label{eq:eps}
\end{equation}
where $\mathbb{E}[.|\btau_i]$ denotes the conditionnal expectation with respect to the variable $\btau_i$. Thus
$$ \mathbb{E} \int_0^1 f(t)dN_t^i = \int_0^1 f(t) \int_{\R} \lambda(t-\tau)g(\tau)d\tau dt = \int_0^1 f(t) (\lambda\star g)(t) dt.$$
Hence, the mean intensity of each randomly shifted process is the convolution $\lambda\star g$ between $\lambda$ and the density of the shifts $g$.   This shows that  a parallel can be made with the classical statistical deconvolution problem which is known to be an inverse problem. This parallel is highlighted by taking a Fourier transformation of the data. Let $(e_{\ell})_{\ell\in \Z}$ the complex Fourier basis on $[0,1]$, i.e. $e_{\ell}(t)=e^{i2\pi \ell t}$ for all $\ell\in \Z$ and $t\in [0,1]$. For $\ell\in \Z$, define
$$ \theta_{\ell}= \int_0^1 \lambda(t) e_{\ell}(t) dt \ \mathrm{and} \ \gamma_{\ell} = \int_0^1 g(t) e_{\ell}(t) dt,$$
as the Fourier coefficients of the intensity $\lambda$ and the density $g$ of the shifts. Then, for $\ell\in \Z$, define $y_{\ell}$ as
\begin{equation} 
y_{\ell} = \frac{1}{n} \sum_{i=1}^n \int_{0}^1 e_{\ell}(t) dN^i_t.
\label{eq:y_k}
\end{equation}
Using (\ref{eq:eps}) with $f=e_{\ell}$, we obtain that 
$$ \mathbb{E} \left[ y_{\ell} \big| \btau_1,\dots,\btau_n \right] = \frac{1}{n} \sum_{i=1}^n  \int_{0}^1 e_{\ell}(t) \lambda(t-\btau_i) dt = \frac{1}{n} \sum_{i=1}^n e^{-i 2 \pi \ell \btau_i} \theta_{\ell} = \tilde\gamma_{\ell} \theta_{\ell},$$
where we have used the notation 
\begin{equation} \label{eq:gammatilde}
\tilde \gamma_{\ell} = \frac{1}{n} \sum_{i=1}^n e^{i 2 \pi \ell \btau_i}, \ \forall \ell \in \Z.
\end{equation}
Hence, the estimation of the intensity $\lambda$ can be formulated as follows: we want to estimate the sequence $(\theta_{\ell})_{\ell \in\Z}$ of Fourier coefficients of $\lambda$ from the sequence space model
\begin{equation}
y_{\ell} = \tilde \gamma_{\ell} \theta_{\ell} + \xi_{\ell,n},
\label{eq:ssm}
\end{equation}
where the $\xi_{\ell,n}$ are centered random variables defined as
$$ \xi_{\ell,n}=\frac{1}{n} \sum_{i=1}^n \left[ \int_{0}^1 e_{\ell}(t) dN_t^{i} - \int_{0}^1 e_{\ell}(t)  \lambda(t-\btau_i) d t \right]  \mbox{ for all } \ell\in \Z. $$
The model (\ref{eq:ssm}) is very close to the standard formulation of statistical linear inverse problems. Indeed, using the singular value decomposition of the considered operator, the standard sequence space model of an ill-posed statistical inverse problem is (see \cite{cavgopitsy} and the references therein)
\begin{equation} \label{eqinvpb}
c_{\ell} = \theta_{\ell} \gamma_{\ell} +  z_{\ell},
\end{equation}
where the $\gamma_{\ell}$'s are eigenvalues of a known linear operator, and the $z_{\ell}$'s represent an additive random noise. The issue in model \eqref{eqinvpb} is to recover the coefficients $\theta_{\ell}$ from the observations $c_{\ell}$. A large class of estimators in model (\ref{eqinvpb}) can be written as
$$
\hat{\theta}_{\ell} = \delta_{\ell}  \frac{c_{\ell}}{\gamma_{\ell}},
$$
where $\delta = (\delta_{\ell})_{\ell \in \Z}$ is a sequence of reals with values in $[0,1]$ called filter (see \cite{cavgopitsy} for further details). 

Equation (\ref{eq:ssm}) can be viewed as a linear inverse problem with a Poisson noise for which the operator to invert is stochastic with eigenvalues $\tilde{\gamma}_{\ell}$ \eqref{eq:gammatilde} that are unobserved random variables. Nevertheless, since the density $g$ of the shifts is assumed to be known and given that $$\mathbb{E}\tilde{\gamma}_{\ell} =  \gamma_{\ell}$$ and  $\tilde{\gamma}_{\ell} \approx  \gamma_{\ell}$  for $n$ sufficiently large (in a sense which will be made precise later on), an estimation of the Fourier coefficients of $f$ can be obtained by a deconvolution step of the form
\begin{equation}
\hat{\theta}_{\ell} = \delta_{\ell} \frac{y_{\ell}}{\gamma_{\ell}} \label{eq:deftheta},
\end{equation}
where $\delta = (\delta_{\ell})_{\ell \in \Z}$ is a filter whose choice has to be discussed.

In this paper, the following type of assumption on  $g$ is considered:

\begin{ass}\label{ass:g}
The Fourier coefficients of $g$ have a polynomial decay {\it i.e.} for some real $\nu > 0$, there exist two constants $C\geq C' > 0$ such that
$
C' |\ell|^{-\nu} \leq  |\gamma_{\ell}| \leq C |\ell|^{-\nu}
$
for all $\ell \in \Z$.
\end{ass}

In standard inverse problems such as deconvolution, the expected optimal rate of convergence from an arbitrary estimator typically depends on such smoothness assumptions for $g$. The parameter $\nu$ is usually referred to as the degree of ill-posedness of the inverse problem, and it quantifies the difficult of inverting the convolution operator.   We will also need the following technical assumption on the decay of the density $g$, which is not a very restrictive condition as $g$ is supposed to be an integrable function on $\R$.

\begin{ass} \label{ass:gdecay}
There exists a constant $C > 0$ and a real $\alpha > 1$  such that the density  $g$  satisfies
$
g(x) \leq \frac{C}{1+ |x|^{\alpha}} \mbox{ for all } x \in \R.
$
\end{ass}

\subsection{A linear estimator by spectral cut-off}

First, we propose a non-adaptive estimator in order to derive an upper bound on the minimax risk. This part allows us to shed light on the connexion between our model and a deconvolution problem. For a given filter $(\delta_{\ell})_{\ell\in\Z}$ and using \eqref{eq:deftheta}, a linear estimator of $\lambda$ is given by
\begin{equation}
\hat\lambda_{\delta}(t)= \sum_{\ell\in \Z} \hat{\theta}_{\ell} e_{\ell}(t)  = \sum_{\ell\in \Z} \delta_{\ell} \gamma_{\ell}^{-1} y_{\ell} e_{\ell}(t)  , \; t \in [0,1], \label{eq:lambdalin}  
\end{equation}
whose quadratic risk can be written in the Fourier domain as
$$ \mathcal{R}(\hat\lambda_{\delta},\lambda)  = \mathbb{E} \sum_{\ell\in \Z} (\hat\theta_{\ell} - \theta_{\ell})^2.$$
The following proposition illustrates how the quality of the estimator $\hat\lambda_{\delta}$ (in term of quadratic risk) is related to the choice of the  filter $\delta$.  
\begin{proposition} \label{prop:risklin}
For any given non-random filter $\delta$, the risk of $\hat{\lambda}^{\delta}$ can be decomposed as
\begin{equation}
\mathcal{R}(\hat{\lambda}^{\delta},\lambda) = \sum_{\ell \in \mathbb{Z} } |\theta_{\ell}|^2 (\delta_{\ell}-1)^2 +  \sum_{\ell\in \mathbb{Z}}  \frac{\delta_{\ell}^2}{n}  |\gamma_{\ell}|^{-2} \|\lambda\|_{1}   + \sum_{\ell\in \mathbb{Z}}  \frac{\delta_{\ell}^2}{n}   |\theta_{\ell}|^2 \left( |\gamma_{\ell}|^{-2} -1  \right) .
\label{eq:risque}
\end{equation}
where $\|\lambda\|_{1} = \int_{0}^{1} \lambda(t) dt$.
\end{proposition}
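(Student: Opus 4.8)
The plan is to work in the Fourier domain. By Parseval's identity, $\|\hat\lambda_\delta-\lambda\|_2^2=\sum_{\ell\in\Z}|\hat\theta_\ell-\theta_\ell|^2$, and by \eqref{eq:ssm} with $\hat\theta_\ell=\delta_\ell\gamma_\ell^{-1}y_\ell$ I would start from the decomposition
\begin{equation*}
\hat\theta_\ell-\theta_\ell=(\delta_\ell-1)\theta_\ell+\delta_\ell\gamma_\ell^{-1}(\tilde\gamma_\ell-\gamma_\ell)\theta_\ell+\delta_\ell\gamma_\ell^{-1}\xi_{\ell,n}=:D_\ell+U_\ell+V_\ell,
\end{equation*}
which isolates a deterministic term $D_\ell$, a term $U_\ell$ measurable with respect to the shifts $\btau=(\btau_1,\dots,\btau_n)$ and centered (since $\E\tilde\gamma_\ell=\gamma_\ell$), and the Poisson fluctuation $V_\ell$, which satisfies $\E[V_\ell\mid\btau]=0$ because, by \eqref{eq:eps}, $\E[\int_0^1 e_\ell\,dN^i\mid\btau_i]=\int_0^1 e_\ell(t)\lambda(t-\btau_i)\,dt$. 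Conditioning on $\btau$ and pulling out the $\sigma(\btau)$-measurable factors then shows that $D_\ell,U_\ell,V_\ell$ are pairwise orthogonal in $L^2$, so that $\E|\hat\theta_\ell-\theta_\ell|^2=|D_\ell|^2+\E|U_\ell|^2+\E|V_\ell|^2$.

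The term $|D_\ell|^2=(\delta_\ell-1)^2|\theta_\ell|^2$ contributes the first sum in \eqref{eq:risque}. For the contribution of $U_\ell$, I would use $\E|U_\ell|^2=\delta_\ell^2|\gamma_\ell|^{-2}|\theta_\ell|^2\,\var(\tilde\gamma_\ell)$ together with the i.i.d.\ structure of the shifts, which gives $\var(\tilde\gamma_\ell)=\tfrac1n\var(e^{i2\pi\ell\btau_1})=\tfrac1n(1-|\gamma_\ell|^2)$, hence $\E|U_\ell|^2=\tfrac{\delta_\ell^2}{n}|\theta_\ell|^2(|\gamma_\ell|^{-2}-1)$, i.e.\ the third sum in \eqref{eq:risque}.

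The crux is $\E|V_\ell|^2=\delta_\ell^2|\gamma_\ell|^{-2}\E|\xi_{\ell,n}|^2$. Conditioning on $\btau$, the $n$ summands defining $\xi_{\ell,n}$ are independent and centered, so $\E[|\xi_{\ell,n}|^2\mid\btau]=\tfrac1{n^2}\sum_{i=1}^n\var(\int_0^1 e_\ell\,dN^i\mid\btau_i)$. Splitting $e_\ell$ into real and imaginary parts and using the classical variance identity for a Poisson stochastic integral, $\var(\int_0^1 f\,dN^i\mid\btau_i)=\int_0^1 |f(t)|^2\lambda(t-\btau_i)\,dt$, together with $|e_\ell|\equiv 1$, each inner variance equals $\int_0^1\lambda(t-\btau_i)\,dt$; by the assumed $1$-periodicity of $\lambda$ this equals $\int_0^1\lambda(t)\,dt=\|\lambda\|_1$, independently of $\btau_i$. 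Therefore $\E|\xi_{\ell,n}|^2=\|\lambda\|_1/n$ and $\E|V_\ell|^2=\tfrac{\delta_\ell^2}{n}|\gamma_\ell|^{-2}\|\lambda\|_1$, the second sum in \eqref{eq:risque}. Summing the three contributions over $\ell\in\Z$ (permissible since all summands are nonnegative) yields \eqref{eq:risque}.

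The main obstacle is the two-level randomness in the last step: one must first freeze the shifts, exploit the conditional independence and conditional centering of the Poisson integrals to reduce $\E|\xi_{\ell,n}|^2$ to an average of conditional Poisson variances, and only then apply the Poisson variance formula and the periodicity of $\lambda$ to collapse the dependence on $\btau_i$. The orthogonality bookkeeping and the elementary computation of $\var(\tilde\gamma_\ell)$ are routine by comparison.
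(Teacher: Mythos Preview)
Your proof is correct and follows essentially the same approach as the paper: condition on the shifts, use the Poisson variance identity $\var\bigl(\int_0^1 f\,dN^i\mid\btau_i\bigr)=\int_0^1|f|^2\lambda(\cdot-\btau_i)$ together with $|e_\ell|\equiv1$ and periodicity of $\lambda$, and compute $\E|\tilde\gamma_\ell-\gamma_\ell|^2=\tfrac1n(1-|\gamma_\ell|^2)$. The only cosmetic difference is that the paper groups your $D_\ell$ and $U_\ell$ into a single term $\theta_\ell\bigl(\delta_\ell\tilde\gamma_\ell/\gamma_\ell-1\bigr)$ before expanding, whereas you split into three orthogonal pieces from the outset; the computations and ingredients are otherwise identical.
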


\noindent
\textsc{Proof}. Remark that
\begin{eqnarray}\label{eq:diff}
\hat{\theta}_{\ell} - \theta_{\ell}& = &\theta_{\ell} \left[\delta_{\ell} \frac{\tilde{\gamma}_{\ell}}{\gamma_{\ell}}-1\right] + \frac{\delta_{\ell}}{n}\sum_{i=1}^n\epsilon_{\ell,i},
\end{eqnarray}
where the $\epsilon_{\ell,i}$ are centered random variables defined as
$
\epsilon_{\ell,i}=\gamma_{\ell}^{-1}\int_{0}^1 e_{\ell}(t) \left( dN_t^{i} - \lambda(t-\btau_i)dt \right).
$
Now, to compute $ \mathbb{E} |\hat{\theta}_{\ell}-\theta_{\ell}|^2 $, remark first that
\begin{eqnarray*}
|\hat{\theta}_{\ell}-\theta_{\ell}|^2 &= &\left[ \theta_{\ell} \left[\delta_{\ell} \frac{\tilde{\gamma}_{\ell}}{\gamma_{\ell}}-1\right] + \frac{\delta_{\ell}}{n} \sum_{i=1}^n \epsilon_{\ell,i} \right]
\overline{
\left[ \theta_{\ell} \left[\delta_{\ell}\frac{\tilde{\gamma}_{\ell}}{\gamma_{\ell}}-1\right] + \frac{\delta_{\ell}}{n} \sum_{i=1}^n \epsilon_{\ell,i} \right]}\\
& = & \left[ |\theta_{\ell}|^2   \left| \delta_{\ell} \frac{\tilde{\gamma}_{\ell}}{\gamma_{\ell}}-1\right|^2
+ 2 \re\left(\theta_{\ell} \left[ \delta_{\ell} \frac{\tilde{\gamma}_{\ell}}{\gamma_{\ell}}-1\right] \overline{\frac{\delta_{\ell}}{n} \sum_{i=1}^n \epsilon_{\ell,i} }\right)
+ \frac{\delta_{\ell}^2}{n^2} \sum_{i,i'=1}^n \epsilon_{\ell,i} \overline{\epsilon_{\ell,i'}} \right]. \\
\end{eqnarray*}
Taking expectation in the above expression yields
\begin{eqnarray*}
\mathbb{E} |\hat{\theta}_{\ell}-\theta_{\ell}|^2 & = & \mathbb{E} \left[ \mathbb{E}  |\hat{\theta}_{\ell}-\theta_{\ell}|^2  \big| \btau_{1}, \ldots, \btau_{n} \right]\\
& = &  \mathbb{E} \left[ |\theta_{\ell}|^2 \left| \delta_{\ell} \frac{\tilde{\gamma}_{\ell}}{\gamma_{\ell}}-1\right|^2 +
2 \re\left(\theta_{\ell} \left[\delta_{\ell} \frac{\tilde{\gamma}_{\ell}}{\gamma_{\ell}}-1\right] \overline{\mathbb{E} \left[ \frac{\delta_{\ell}}{n} \sum_{i=1}^n \epsilon_{\ell,i} \right] } \right)    \big| \btau_{1}, \ldots, \btau_{n}  \right] \\
&  & + \mathbb{E} \left[  \frac{\delta_{\ell}^2}{n^2} \sum_{i,i'=1}^n \mathbb{E}\left[\epsilon_{\ell,i} \overline{\epsilon_{\ell,i'}}   \big| \btau_{1}, \ldots, \btau_{n}  \right]\right].
\end{eqnarray*}
Now, remark that given two integers $i \neq i'$ and the two shifts $\btau_{i}, \btau_{i'}$, $\epsilon_{\ell,i}$ and $\epsilon_{\ell,i'}$ are independent with zero mean. Therefore,   using the equality
$$ \mathbb{E} \left| \delta_{\ell} \frac{\tilde{\gamma}_{\ell}}{\gamma_{\ell}}-1\right|^2 = \delta_{\ell}^2 |\gamma_\ell|^{-2} \mathbb{E} |\tilde\gamma_\ell-\gamma_\ell|^2 + (\delta_\ell-1)^2 = (\delta_\ell-1)^2 + \frac{\delta_\ell^2}{n} (|\gamma_\ell|^{-2} -1),$$
one finally obtains
\begin{eqnarray*}
\mathbb{E} |\hat{\theta}_{\ell}-\theta_{\ell}|^2 & = & |\theta_{\ell}|^2 \mathbb{E} \left| \delta_{\ell} \frac{\tilde{\gamma}_{\ell}}{\gamma_{\ell}}-1\right|^2 + \mathbb{E} \left[   \frac{\delta_{\ell}^2}{n^2} \sum_{i=1}^n \mathbb{E} \left[|\epsilon_{\ell,i}|^2   \big| \btau_{1}, \ldots, \btau_{n}  \right]\right] \\
& = & |\theta_{\ell}|^2 (\delta_{\ell}-1)^2 +   \frac{\delta_{\ell}^2}{n} \left(   |\theta_{\ell}|^2  \left( |\gamma_{\ell}|^{-2}  -1 \right) + \mathbb{E} |\epsilon_{\ell,1}|^2 \right).
\end{eqnarray*}
Using in what follows the equality $\mathbb{E}|a+ib|^2 = \mathbb{E}[|a|^2+|b|^2]$
with
$
a=\int_{0}^1 \cos (2 \pi \ell t) \left( dN_t^1 - \lambda(t-\btau_1) dt \right)
$
and
$
b=\int_{0}^1 \sin (2 \pi \ell t) \left( dN_t^1 - \lambda(t-\btau_1) dt \right),
$
we obtain
\begin{eqnarray*}
\mathbb{E} |\epsilon_{\ell,1}|^2 &=& |\gamma_{\ell}|^{-2}\mathbb{E} \left[ \mathbb{E}  \left| \int_{0}^1 e_{\ell}(t) \left( dN_t^1 - \lambda(t-\btau_1)  dt \right) \right|^2  \big| \btau_{1} \right]\\
& = & |\gamma_{\ell}|^{-2} \mathbb{E}  \int_{0}^1 \left( | \cos (2 \pi \ell t) |^2 +  | \sin (2 \pi \ell t) |^2  \right) \lambda(t-\btau_{1}) dt = |\gamma_{\ell}|^{-2} \|\lambda\|_{1},
\end{eqnarray*}
where the last equality follows from the fact that $\lambda$ has been extended outside $[0,1]$ by periodization, which completes the proof.
\hfill $\Box$ \\

Note that the quadratic risk of any linear estimator in  model (\ref{eq:ssm}) is  composed of three  terms. The two first terms in the risk decomposition (\ref{eq:risque}) correspond to the classical bias and variance in statistical inverse problems. The third term corresponds to the error related to the fact that the inversion of the operator is done using $(\gamma_l)_{l\in\mathbb{Z}}$ instead of the (unobserved) random eigenvalues $(\tilde \gamma_l)_{l\in\mathbb{Z}}$.

\subsection{Upper bound of the minimax risk on Sobolev balls}

There exist different type of filters in the inverse problems literature (see e.g.\  \cite{cavgopitsy}). In this section, we consider the family of projection (or spectral cut-off) filters $\delta^{M} = \left( \delta_{\ell} \right)_{\ell \in \Z}=   \left(\1_{\lbrace |\ell|\leq M \rbrace} \right)_{\ell \in \Z}$ for some $M \in\N$. Using Proposition \ref{prop:risklin}, it follows that 
\begin{equation}
\mathcal{R}(\hat{\lambda}^{\delta^M},\lambda) = \sum_{\ell > M} |\theta_{\ell}|^2+ \frac{1}{n} \sum_{|\ell|<M}\left(  |\gamma_{\ell}|^{-2} \|\lambda\|_{1} + |\theta_{\ell}|^2 \left( |\gamma_{\ell}|^{-2} -1  \right)  \right) . \label{eq:risklinspec}
\end{equation}
Now, consider the following smoothness class of functions (a Sobolev ball of radius $A$)
$$
H_{s}(A) = \left\{\lambda \in L^{2}([0,1]) \; ; \sum_{\ell \in \Z}  (1+|\ell|^{2s}) |\theta_{\ell}|^{2} \leq A  \mbox{ and } \lambda(t) \geq 0    \mbox{ for all } t \in [0,1] \right\},
$$
and some smoothness parameter $s > 0$, where $\theta_{\ell} = \int_{0}^1 e^{- 2 i \ell \pi t} \lambda(t) dt$. For an appropriate choice of the spectral cut-off parameter $M$, the following proposition gives the asymptotic behavior of the risk of $\hat{\lambda}^{\delta^M}$, see equation \eqref{eq:lambdalin}.

\begin{proposition}\label{prop:risklinearpoly}
Assume that  $f$ belongs to $H_{s}(A)$ with $s > 1/2$ and $A > 0$, and  that $g$ satisfies Assumption (\ref{ass:g}). If $M = M_{n}$ is chosen as the largest integer such  $M_{n} \leq n^{\frac{1}{2s+ 2\nu+1}}$, then as $n \to + \infty$
$$
\sup_{\lambda \in H_{s}(A) } \mathcal{R}(\hat{\lambda}^{\delta^M},\lambda) = \opO \left(   n^{-\frac{2s}{2s + 2 \nu+1}} \right).
$$
\end{proposition}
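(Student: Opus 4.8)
The plan is to start from the exact risk identity \eqref{eq:risklinspec} for the spectral cut-off filter $\delta^{M}$, to control each of its three summands by a bound that is uniform over the ball $H_{s}(A)$, and then to optimise the resulting bound in the cut-off parameter $M$. Write the three terms as a bias term $\sum_{|\ell|>M}|\theta_{\ell}|^{2}$, a variance term $\frac{\|\lambda\|_{1}}{n}\sum_{|\ell|\leq M}|\gamma_{\ell}|^{-2}$, and a residual term $\frac1n\sum_{|\ell|\leq M}|\theta_{\ell}|^{2}(|\gamma_{\ell}|^{-2}-1)$.

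For the bias term, I would use that for $|\ell|>M$ one has $1+|\ell|^{2s}\geq M^{2s}$, so that
$$
\sum_{|\ell|>M}|\theta_{\ell}|^{2}\;\leq\;\frac{1}{M^{2s}}\sum_{\ell\in\Z}(1+|\ell|^{2s})|\theta_{\ell}|^{2}\;\leq\;\frac{A}{M^{2s}},
$$
uniformly over $H_{s}(A)$. For the variance term, I would first note that since $\lambda\geq 0$ one has $\|\lambda\|_{1}=\theta_{0}\leq\|\lambda\|_{2}\leq\sqrt A$ by Cauchy--Schwarz on $[0,1]$, so $\|\lambda\|_{1}$ is bounded uniformly over the ball; and by Assumption~\ref{ass:g} one has $|\gamma_{\ell}|^{-2}\leq (C')^{-2}|\ell|^{2\nu}$, so that the elementary comparison of the sum with an integral gives $\sum_{|\ell|\leq M}|\gamma_{\ell}|^{-2}=\opO(M^{2\nu+1})$, hence the variance term is $\opO(M^{2\nu+1}/n)$ uniformly. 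Finally, for the residual term I would crudely bound $|\gamma_{\ell}|^{-2}-1\leq (C')^{-2}M^{2\nu}$ for $|\ell|\leq M$ together with $\sum_{\ell}|\theta_{\ell}|^{2}\leq A$, which yields the bound $(C')^{-2}A\,M^{2\nu}/n$, of strictly smaller order than the variance term.

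Putting the three estimates together gives, uniformly in $\lambda\in H_{s}(A)$,
$$
\mathcal{R}(\hat{\lambda}^{\delta^{M}},\lambda)=\opO\!\left(M^{-2s}+\frac{M^{2\nu+1}}{n}\right),
$$
and the two terms are of the same order precisely when $M^{2s+2\nu+1}\asymp n$. With the prescribed choice of $M_{n}$ as the largest integer not exceeding $n^{1/(2s+2\nu+1)}$, the inequality $M_{n}\leq n^{1/(2s+2\nu+1)}$ makes the variance contribution at most $n^{(2\nu+1)/(2s+2\nu+1)-1}=n^{-2s/(2s+2\nu+1)}$, while $M_{n}\geq n^{1/(2s+2\nu+1)}-1\geq\tfrac12 n^{1/(2s+2\nu+1)}$ for $n$ large enough makes the bias contribution $\opO(n^{-2s/(2s+2\nu+1)})$; this gives the announced rate.

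The argument is essentially routine; the only points that deserve attention are the bookkeeping needed to check that every constant in the above bounds ($A$, the bounds on $\|\lambda\|_{1}$ and $\|\lambda\|_{2}$, and the constant $C'$ of Assumption~\ref{ass:g}) is independent of $\lambda$, so that the supremum over $H_{s}(A)$ may be taken termwise, together with the elementary estimate $\sum_{|\ell|\leq M}|\ell|^{2\nu}=\opO(M^{2\nu+1})$. The hypothesis $s>1/2$ is what guarantees that elements of $H_{s}(A)$ are bona fide bounded functions, so that $\|\lambda\|_{1}$ and the pointwise Fourier expansions used throughout are meaningful.
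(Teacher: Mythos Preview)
Your proposal is correct and follows exactly the approach the paper has in mind: the paper's own proof is the single sentence ``follows immediately from the decomposition \eqref{eq:risklinspec}, the definition of $H_{s}(A)$ and Assumption \ref{ass:g}'', and you have simply written out those three ingredients in full, bounding the bias by $A\,M^{-2s}$, the variance by $C\,M^{2\nu+1}/n$ via the lower bound on $|\gamma_\ell|$, and absorbing the residual term, then balancing. One small remark: your closing comment on the role of $s>1/2$ is slightly off --- the bound $\|\lambda\|_{1}\leq\|\lambda\|_{2}\leq\sqrt{A}$ holds for any $\lambda\in L^{2}([0,1])$ and your argument goes through for any $s>0$; the restriction $s>1/2$ is there so that the pointwise nonnegativity constraint in the definition of $H_{s}(A)$ is meaningful, not for the risk computation itself.
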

The proof follows immediately from the decomposition \eqref{eq:risklinspec}, the definition of $H_{s}(A)$ and Assumption (\ref{ass:g}).

Hence, Proposition \ref{prop:risklinearpoly} shows that under Assumption \ref{ass:g} the quadratic risk $\mathcal{R}(\hat{\lambda}^{\delta^M},\lambda)$ is of polynomial order of the sample size $n$, and that this rate deteriorates as the degree of ill-posedness $\nu$ increases. Such a behavior is a well known fact for standard deconvolution problems, see e.g.\  \cite{MR2488345}, \cite{JKPR}   and references therein.  Proposition \ref{prop:risklinearpoly} shows that a similar phenomenon holds for our linear estimator. Hence, there may exist a connection between  estimating a mean pattern intensity from a set of non-homogeneous Poisson processes and the statistical analysis of deconvolution problems. However, the choice of $M_{n}$ depends on the a priori unknown smoothness $s$ of the intensity $\lambda$. Such a spectral cut-off estimator is thus non-adaptive and is of limited interest for applications. Moreover, the result of Proposition \ref{prop:risklinearpoly}  is only suited for smooth functions since Sobolev balls  $H_{s}(A)$ for $s > 1/2$ are not well adapted to model intensities $\lambda$ which may have singularities. In the following section, we thus consider the problem of constructing an adaptive estimator and we study its minimax risk over Besov balls.

\section{Adaptive estimation in Besov spaces} \label{sec:adapt}

\subsection{Meyer wavelets}

We will  use Meyer wavelets to obtain   a non-linear and adaptive estimator. Let us denote by $\psi$ (resp. $\phi$) the periodic mother Meyer wavelet (resp. scaling function) on the interval $[0,1]$ (see e.g. \cite{MR2488345,JKPR} for a precise definition). The intensity  $\lambda \in L^2([0,1])$ can then be decomposed as follows
$$
\lambda(t) = \sum_{k=0}^{2^{j_0}-1} c_{j_0,k} \phi_{j_0,k}(t) + \sum_{j=j_0}^{+ \infty} \sum_{k=0}^{2^j-1} \beta_{j,k} \psi_{j,k}(t),
$$ 
where $\phi_{j_0,k}(t)  = 2^{j_{0}} \phi(2^{j_{0}}t - k)$, $\psi_{j,k}(t)  = 2^{j} \psi(2^{j}t - k)$, $j_0 \geq 0 $ denotes the usual coarse level of resolution, and
$$
c_{j_0,k} = \int_{0}^{1} \lambda(t) \phi_{j_0,k}(t) dt, \; \beta_{j,k} = \int_{0}^{1} \lambda(t) \psi_{j,k}(t) dt,
$$
are the scaling and wavelet coefficients of $\lambda$. It is well known that Besov spaces can be characterized in terms of wavelet coefficients (see e.g \cite{Meyer}). Let $s > 0$ denote the usual smoothness parameter, then  for the Meyer wavelet basis and for a Besov ball $B^{s}_{p,q}(A)$ of radius $A > 0$ with $1 \leq p,q \leq \infty$, one has that
$$
B^{s}_{p,q}(A) = \left\{ f  \in L^{2}([0,1]) :  \left(\sum_{k =0}^{2^{j_{0}}-1} |c_{j_{0},k}|^{p} \right)^{\frac{1}{p}} + \left( \sum_{j = j_{0}}^{+ \infty} 2^{j(s + 1/2-1/p)q} \left( \sum_{k=0}^{2^{j}-1} |\beta_{j,k}|^{p}\right)^{\frac{q}{p}} \right)^{\frac{1}{q}} \leq A \right\}
$$ 
with the respective above sums replaced by maximum if $p=\infty$ or $q=\infty$. The parameter $s$ is related to the smoothness of the function $f$. Note that if $p=q=2$, then a Besov ball is equivalent to a Sobolev ball if $s$ is not an integer. For $1 \leq p < 2$,  the space $B^{s}_{p,q}(A) $ contains functions with local irregularities.

Meyer wavelets satisfy the fundamental property of being band-limited function in the Fourier domain which make them well suited for deconvolution problems. More precisely, each $\phi_{j,k}$ and $\psi_{j,k}$ has a compact support  in the Fourier domain in the sense that
$$
\phi_{j_{0},k}=\sum_{\ell \in D_{j_{0}}} c_\ell(\psi_{j_{0},k}) e_\ell, \; \psi_{j,k}=\sum_{\ell \in \Omega_j} c_\ell(\psi_{j,k}) e_\ell,
$$
with
$$
c_\ell(\phi_{j_{0},k}) =  \int_{0}^1 e^{- 2 i \ell \pi t} \phi_{j_{0},k}(t) dt, \; c_\ell(\psi_{j,k}) =  \int_{0}^1 e^{- 2 i \ell \pi t} \psi_{j,k}(t) dt,
$$
and where $D_{j_{0}}$ and $\Omega_j$ are finite subsets of integers  such that $\# D_{j_{0}} \leq C 2^{j_{0}}$, $\# \Omega_{j} \leq C 2^{j}$ for some constant $C > 0$ independent of $j$ and
\begin{equation}
\Omega_j \subset [-2^{j+2}c_{0},-2^{j}c_{0}] \cup [2^{j}c_{0},2^{j+2}c_{0}] \label{eq:Omegaj}
\end{equation}
with $c_{0} = 2\pi/3$. Then, thanks to Parseval's relation
$
c_{j_{0},k} = \sum_{\ell \in D_{j_{0}}} c_{\ell}(\phi_{j_{0},k}) \theta_\ell , \; \beta_{j,k}  = \sum_{\ell \in \Omega_j} c_{\ell}(\psi_{j,k}) \theta_\ell.
$
and from the unfiltered estimator $\hat{\theta}_\ell = \gamma_{\ell}^{-1} y_\ell$ of each $\theta_\ell$,  see equation \eqref{eq:ssm},  one can build estimators of the scaling and wavelet coefficients by defining
\begin{equation}\label{eq:estimation}
\quad \hat{c}_{j_0,k} = \sum_{\ell \in \Omega_{j_0}} c_\ell(\psi_{j_0,k}) \hat{\theta}_\ell  \quad \text{and} \quad  \hat{\beta}_{j,k} = \sum_{\ell \in \Omega_j} c_\ell(\psi_{j,k}) \hat{\theta}_\ell .
\end{equation}

\subsection{Hard thresholding estimation}

We propose to use a non-linear hard thresholding estimator defined by
\begin{equation}\label{eq:threshold_estimate}
\hat{\lambda}_n^h = \sum_{k=0}^{2^{j_0(n)}-1} \hat{c}_{j_0,k} \phi_{j_0,k} + \sum_{j=j_0(n)}^{j_1(n)} \sum_{k=0}^{2^j-1} \hat{\beta}_{j,k} \1_{\{| \hat{\beta}_{j,k}  | \geqslant \hat{s}_{j}(n)\}}  \psi_{j,k}.
\end{equation}
In the above formula, $\hat{s}_{j}(n)$ refers to possibly random thresholds that depend on the resolution $j$, while $j_{0} = j_0(n)$ and $j_{1} = j_1(n)$ are the usual coarsest and highest resolution levels whose dependency on $n$ will be specified later on. Then, let us  introduce some notations. For all $j\in \mathbb{N}$, define
\begin{equation}
\sigma^{2}_{j} = 2^{-j}  \sum_{\ell \in \Omega_{j}} |\gamma_{\ell}|^{-2} \mbox{ and } \epsilon_{j} = 2^{-j/2}  \sum_{\ell \in \Omega_{j}} |\gamma_{\ell}|^{-1}, \label{eq:sigepsj}
\end{equation}
and for any $\gamma>0$, let
\begin{equation}
\tilde{K}_n (\gamma)=   \frac{1}{n}  \sum_{i=1}^{n} K_{i} + \frac{4 \gamma \log n }{3n} + \sqrt{\frac{2  \gamma \log n}{n^2}    \sum_{i=1}^{n} K_{i}  + \frac{5  \gamma^{2} (\log n )^{2}}{3n^2}  },
\label{eq:tildeK}
\end{equation}
where $K_{i} = \int_{0}^{1} dN^{i}_{t}$ is the number of points of the counting process $N^{i}$ for $i=1,\ldots,n$.
%For any $\epsilon > 0$, introduce the class of functions
%$$
%\Lambda_{\epsilon, g} = \left\{ \lambda \in L^{2}([0,1]); \;  \| \lambda \|_{\infty} < + \infty, \; \lambda(t) \geq 0   \mbox{ and } \lambda * g(t) \geq \epsilon \mbox{ for all } t \in [0,1] \right\},
%$$
%where $\| \lambda \|_{\infty} = \sup_{t \in [0,1]} \{ |\lambda(t)| \}$.
Introduce also the class of bounded intensity functions
$$
\Lambda_{\infty} = \left\{ \lambda \in L^{2}([0,1]); \;  \| \lambda \|_{\infty} < + \infty \mbox{ and } \lambda(t) \geq 0    \mbox{ for all } t \in [0,1] \right\},
$$
where $\| \lambda \|_{\infty} = \sup_{t \in [0,1]} \{ |\lambda(t)| \}$.

%$$
%\Delta^n_{jk}(\gamma) =   \sqrt{ \| \tilde{\psi}_{j,k} \|_2^2  \left(  \|g\|_{\infty}   \tilde{K}_n(\gamma) \frac{2 \gamma \log n}{n}     + u_{n}(\gamma)  \right) } +   \frac{\gamma\log n}{3 n}     \| \tilde{\psi}_{j,k}\|_{\infty},
%$$

\begin{theo} \label{theo:upperbound} Suppose that $g$ satisfies Assumption \ref{ass:g} and Assumption \ref{ass:gdecay}. Let $1 \leq p \leq \infty$, $1 \leq q \leq \infty$ and $A > 0$. Let $ p'=\min(2,p)$, and assume that
$
s > 1/p'  \mbox{ and } (s + 1/2 -1/p')p > \nu(2-p).
$
Let $\delta > 0$ and suppose that the non-linear  estimator $ \hat{\lambda}_{n}^{h}$ \eqref{eq:threshold_estimate} is computed using the random thresholds
 
$$
\hat{s}_{j}(n) =  4 \left( \sqrt{ \sigma^{2}_{j}  \frac{ 2 \gamma \log n}{n}   \left( \|g\|_{\infty}   \tilde{K}_n(\gamma) + \delta \right) } +  \frac{\gamma\log n}{3 n}  \epsilon_{j} \right), \;  \mbox{ for } \; j_{0}(n) \leq j \leq j_{1}(n),
$$
 with $\gamma\geq2$, and where $ \sigma^{2}_{j}$ and  $\epsilon_{j}$ are defined in \eqref{eq:sigepsj}. Define $j_{0}(n)$ as the largest integer such that $2^{j_{0}(n)} \leq \log n $ and  $j_{1}(n)$ as the largest integer such that $2^{j_{1}(n)} \leq \left(\frac{n}{\log n}\right)^{\frac{1}{2 \nu +1}}$. 
Then, as $n \to + \infty$,
$$
\sup_{\lambda \in  B_{p,q}^{s}(A)\bigcap   \Lambda_{\infty} } \mathcal{R}( \hat{\lambda}_{n}^{h}, \lambda) = \opO \left(  \left(\frac{ \log n}{n}\right)^{\frac{2s}{2s + 2\nu +1} } \right).
$$
\end{theo}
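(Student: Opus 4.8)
Since the periodic Meyer system is orthonormal in $L^{2}([0,1])$, the risk decomposes exactly as
$$
\mathcal{R}(\hat{\lambda}_n^h,\lambda)=\sum_{k=0}^{2^{j_0}-1}\E\bigl|\hat c_{j_0,k}-c_{j_0,k}\bigr|^2+\sum_{j=j_0}^{j_1}\sum_{k=0}^{2^{j}-1}\E\bigl|\hat\beta_{j,k}\1_{\{|\hat\beta_{j,k}|\geq\hat s_j(n)\}}-\beta_{j,k}\bigr|^2+\sum_{j>j_1}\sum_{k=0}^{2^{j}-1}|\beta_{j,k}|^2 ,
$$
with $j_0=j_0(n)$, $j_1=j_1(n)$. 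The tail term is controlled by the wavelet characterisation of $B^{s}_{p,q}(A)$: with $s'=s+1/2-1/p'$ one has $\sum_k|\beta_{j,k}|^2\leq CA^2 2^{-2js'}$, so this term is $\opO(2^{-2j_1 s'})=\opO\bigl((n/\log n)^{-2s'/(2\nu+1)}\bigr)$, and the hypotheses $s>1/p'$ and $(s+1/2-1/p')p>\nu(2-p)$ are exactly what guarantees $2s'/(2\nu+1)\geq 2s/(2s+2\nu+1)$, hence this is $\opO\bigl((\log n/n)^{2s/(2s+2\nu+1)}\bigr)$. For the coarse term I would introduce the ``deconvolved'' functions $\phi^\gamma_{j_0,k}=\sum_{\ell\in D_{j_0}}c_\ell(\phi_{j_0,k})\gamma_\ell^{-1}e_\ell$ and $\psi^\gamma_{j,k}=\sum_{\ell\in\Omega_j}c_\ell(\psi_{j,k})\gamma_\ell^{-1}e_\ell$, so that $\hat c_{j_0,k}=\frac1n\sum_{i=1}^n\int_0^1\phi^\gamma_{j_0,k}\,dN^i_t$ and $\hat\beta_{j,k}=\frac1n\sum_{i=1}^n\int_0^1\psi^\gamma_{j,k}\,dN^i_t$ are \emph{unconditionally} averages of $n$ i.i.d.\ terms; bounding $\var\bigl(\int_0^1\phi^\gamma_{j_0,k}dN^1\bigr)\leq C\|\lambda\|_\infty\|\phi^\gamma_{j_0,k}\|_2^2+(\text{a shift term of the same order})$, summing over $k$ with $\sum_k\|\phi^\gamma_{j_0,k}\|_2^2\leq C2^{j_0}\sigma_{j_0}^2$ and using Assumption~\ref{ass:g} gives a coarse term of order $(\log n)^{2\nu+1}/n=o\bigl((\log n/n)^{2s/(2s+2\nu+1)}\bigr)$ since $2s/(2s+2\nu+1)<1$.

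The heart of the proof is the thresholding term, and it rests on two deviation bounds. \emph{First}, a Bernstein inequality for the detail coefficients: writing $\hat\beta_{j,k}-\beta_{j,k}$ as an average of $n$ centred i.i.d.\ variables and splitting off the fluctuation due to the random shifts, $\sum_{\ell\in\Omega_j}c_\ell(\psi_{j,k})\theta_\ell(\tilde\gamma_\ell/\gamma_\ell-1)$ (an average of bounded variables with conditional variance $\leq\frac1n\|\lambda\|_2^2\|\psi^\gamma_{j,k}\|_2^2$, to which classical Bernstein applies), from the purely Poissonian fluctuation (to which a concentration inequality for integrals against a Poisson process applies, with variance proxy $\frac1n\|\lambda\star g\|_\infty\|\psi^\gamma_{j,k}\|_2^2$ and ``sup'' term governed by $\|\psi^\gamma_{j,k}\|_\infty$), and using the band-limited structure \eqref{eq:Omegaj} together with Assumption~\ref{ass:g} to compare $\|\psi^\gamma_{j,k}\|_2^2$ with $\sigma_j^2$ and $\|\psi^\gamma_{j,k}\|_\infty$ with $2^{j/2}\epsilon_j$, one obtains a \emph{deterministic} level $s_j\asymp\sqrt{\sigma_j^2\gamma\log n/n}+\gamma\log n\,\epsilon_j/n$ such that $\PP\bigl(|\hat\beta_{j,k}-\beta_{j,k}|\geq s_j/2\bigr)\leq 2n^{-\gamma'}$, with $\gamma'$ large thanks to $\gamma\geq 2$ and the numerical constants built into $\hat s_j(n)$. \emph{Second}, that the data-driven threshold brackets $s_j$: since $K_i=\int_0^1 dN^i_t$ is, by periodicity of $\lambda$, Poisson with mean $\|\lambda\|_1$, $\frac1n\sum_iK_i$ concentrates around $\|\lambda\|_1$ and $\tilde K_n(\gamma)$ is exactly the associated Bernstein upper confidence bound, so $\PP\bigl(\|g\|_\infty\tilde K_n(\gamma)+\delta<\|\lambda\star g\|_\infty\bigr)\leq n^{-c\gamma}$; on the complementary event, $s_j\leq\hat s_j(n)\leq C s_j$ for all $j_0\leq j\leq j_1$.

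Granting these, I would run the classical case analysis for hard thresholding at each $(j,k)$. On the favourable event $\{s_j\leq\hat s_j(n)\leq Cs_j\}\cap\{|\hat\beta_{j,k}-\beta_{j,k}|\leq s_j/2\}$, comparing $|\beta_{j,k}|$ with $s_j$ gives $\E\bigl|\hat\beta_{j,k}\1_{\{|\hat\beta_{j,k}|\geq\hat s_j(n)\}}-\beta_{j,k}\bigr|^2\leq C\bigl(|\beta_{j,k}|^2\wedge s_j^2\bigr)$, the true variance being absorbed since $\var(\hat\beta_{j,k})\leq C\sigma_j^2/n\leq Cs_j^2/\log n$. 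On the complementary events (threshold off, or $|\hat\beta_{j,k}-\beta_{j,k}|>s_j/2$), Cauchy--Schwarz together with the probability bounds $n^{-\gamma'}$, $n^{-c\gamma}$ and the crude polynomial bound $\E|\hat\beta_{j,k}-\beta_{j,k}|^4\leq\mathrm{poly}(2^{j_1})\leq\mathrm{poly}(n)$ (finite since Poisson integrals have moments of all orders) make the total contribution $o$ of the rate once summed over the $\opO(\mathrm{poly}(n))$ pairs $(j,k)$, provided $\gamma\geq 2$. It then remains to bound the deterministic ``oracle'' sum $\sum_{j=j_0}^{j_1}\sum_k(|\beta_{j,k}|^2\wedge s_j^2)$ over $B^{s}_{p,q}(A)$ with $s_j^2\asymp 2^{2j\nu}\log n/n$: splitting at the resolution $j^\star$ with $2^{j^\star}\asymp(n/\log n)^{1/(2s+2\nu+1)}$ and using the Besov constraints on $\sum_k|\beta_{j,k}|^p$ in the standard way yields exactly $\opO\bigl((\log n/n)^{2s/(2s+2\nu+1)}\bigr)$, again under the stated conditions on $(s,p,q,\nu)$. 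Summing the three contributions proves the claim.

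\textbf{Main obstacle.} The delicate point is the Bernstein bound for $\hat\beta_{j,k}-\beta_{j,k}$ with the \emph{correct} scalings $\sigma_j^2/n$ (variance) and $\epsilon_j/n$ (sup), and with the numerical constants matched so that the \emph{estimated} threshold $\hat s_j(n)$ --- built from the empirical counts $K_i$ rather than from the unknown $\|\lambda\star g\|_\infty$ --- dominates the fluctuations with probability at least $1-n^{-\gamma}$, $\gamma\geq 2$. This forces one to (a) separate cleanly the Poissonian fluctuation from the shift-induced fluctuation, each requiring its own exponential inequality; (b) relate $\|\psi^\gamma_{j,k}\|_2$ and $\|\psi^\gamma_{j,k}\|_\infty$ to $\sigma_j$ and $\epsilon_j$ via the band-limited structure \eqref{eq:Omegaj} and Assumption~\ref{ass:g}; and (c) combine the concentration of $\frac1n\sum_iK_i$ with the deviation bound through a union bound over the $\opO(\mathrm{poly}(n))$ pairs $(j,k)$. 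The remaining Besov-space bookkeeping is standard.
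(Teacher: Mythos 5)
Your proposal is correct and follows essentially the same route as the paper: the same risk decomposition into coarse, thresholded and tail terms, the same split of $\hat\beta_{j,k}-\beta_{j,k}$ into a shift-induced fluctuation (handled by Bernstein) and a Poissonian fluctuation (handled by a Bennett-type inequality for Poisson processes, with $\|\tilde\psi_{j,k}\|_2$ and $\|\tilde\psi_{j,k}\|_\infty$ compared to $\sigma_j$ and $\epsilon_j$), the same justification of the data-driven threshold via concentration of $\frac{1}{n}\sum_i K_i$ around $\|\lambda\|_1$, and the same Besov bookkeeping with a cut at $2^{j^\star}\asymp (n/\log n)^{1/(2s+2\nu+1)}$. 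The only cosmetic difference is that you phrase the thresholding term as an oracle bound $\sum_{j,k}(|\beta_{j,k}|^2\wedge s_j^2)$ while the paper works with the explicit four-term split $R_{21},R_{22},R_{31},R_{32}$ (handling the small-coefficient term via $\E\hat s_j(n)^{2-p}$ and H\"older rather than a two-sided bracket $\hat s_j(n)\leq Cs_j$); these are equivalent organizations of the same argument.
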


Hence, Theorem \ref{theo:upperbound} shows that under Assumption \ref{ass:g} the quadratic risk of the non-linear estimator  $\hat{\lambda}_{n}^{h}$ is of polynomial order of the sample size $n$, and that this rate deteriorates as $\nu$ increases. Again, this result illustrates the connection between estimating a mean intensity  from the observation of Poisson processes and the analysis of inverse problems in nonparametric statistics.  Note that the choices of the random thresholds $\hat{s}_{j}(n)$ and the highest resolution level $j_{1}$ do not depend on the smoothness parameter $s$.  Hence, contrary to the linear estimator proposed in Section \ref{sec:nonadapt}, the non-linear estimator $\hat{\lambda}_{n}^{h}$ is said to be adaptive with respect to the unknown smoothness $s$. Moreover, the Besov spaces $B^{s}_{p,q}(A)$ may contain functions with local irregularities. The above described non-linear estimator is thus suitable for the estimation of non-globally smooth functions.

In Section \ref{sec:lowerbound}, we show that the rate $n ^{-\frac{2s}{2s + 2\nu +1}}$ is a lower bound for the asymptotic decay of the minimax risk over a large scale of Besov balls. Hence, the wavelet estimator that we propose is almost optimal up to a logarithmic term which is usually called the price to be paid for adaptivity.

\subsection{Proof of the upper bound}

Following standard arguments in wavelet thresholding to derive the rate of convergence of such non-linear estimators (see e.g.\  \cite{MR2488345}), one needs to bound the centered moment of order 2 and 4 of  $\hat{c}_{j_{0},k}$ and $\hat{\beta}_{j,k}$ (see Proposition \ref{prop:moments}), as well as the deviation in probability between $\hat{\beta}_{j,k}$ and $\beta_{j,k}$ (see Proposition \ref{prop:dev}).  In the proof, $C$, $C'$, $C_{1}$, $C_{2}$ denote positive constants that are independent of $\lambda$ and $n$, and whose value may change from line to line. The proof requires technical results that are postponed and proved in Section \ref{technicalupper}. First, define the following quantities
$$
\tilde{\psi}_{j,k}(t) = \sum_{\ell \in \Omega_j} \gamma_\ell^{-1} c_\ell(\psi_{j,k}) e_{\ell}(t),  \; \ V_j^2 = \|g\|_{\infty} 2^{-j} \sum_{\ell \in \Omega_j} \frac{|\theta_\ell|^2}{|\gamma_\ell|^2}, \; 
\delta_j = 2^{-j/2} \sum_{\ell \in \Omega_j} \frac{| \theta_\ell|}{|\gamma_\ell|},
$$
and  
\begin{equation}
\Delta^n_{jk}(\gamma) =   \sqrt{ \| \tilde{\psi}_{j,k} \|_2^2  \left(  \|g\|_{\infty}   \tilde{K}_n(\gamma) \frac{2 \gamma \log n}{n}     + u_{n}(\gamma)  \right) } +   \frac{\gamma\log n}{3 n}     \| \tilde{\psi}_{j,k}\|_{\infty},
\label{eq:deltajkn}
\end{equation}
where $\tilde K_n(\gamma)$ is introduced in (\ref{eq:tildeK}), $u_{n}(\gamma)$ is a sequence of reals such that  $u_{n}(\gamma) = o\left(  \frac{ \gamma \log n}{n} \right)$ as $n \to + \infty$.

\subsubsection{Proof of Theorem \ref{theo:upperbound}}

As classically done in wavelet thresholding, use the following risk decomposition
$$
\E \| \hat{\lambda}_{n}^{h} - \lambda \|^{2}_{2} = R_1 + R_2 + R_3 + R_4,
$$
where
\begin{eqnarray*}
R_1 & = & \sum_{k =0}^{2^{j_{0}}-1} \E (\hat{c}_{j_{0},k} \ - c_{j_{0},k})^{2}, \quad
R_2  =   \sum_{j = j_{0}}^{j_{1}}  \sum_{k=0}^{2^{j}-1}   \E \left[ ( \hat{\beta}_{j,k} - \beta_{j,k})^2 \1_{\{ |\hat{\beta}_{j,k}| \geq \hat{s}_{j}(n)  \}} \right] \\
R_3 & = &  \sum_{j = j_{0}}^{j_{1}}  \sum_{k=0}^{2^{j}-1}   \E \left[  \beta_{j,k}^{2}  \1_{\{ |\hat{\beta}_{j,k}| < \hat{s}_{j}(n)  \}} \right], \quad
R_4  =  \sum_{j = j_{1} + 1}^{+ \infty} \sum_{k =0}^{2^{j}-1} \beta_{j,k}^{2}. 
\end{eqnarray*}
\underline{Bound on $R_4$}: first, recall that following our assumptions, Lemma 19.1 of \cite{johnstone} implies that
\begin{equation}
\sum_{k = 0}^{2^{j}-1} \beta_{jk}^{2} \leq C 2^{-2js^{\ast}}, \mbox{ with }  s^{\ast} = s + 1/2 - 1/p', \label{eq:john}
\end{equation}
where $C$ is a constant depending only on $p,q,s,A$. Since by definition $2^{-j_{1}} \leq 2 (\frac{\log n}{n})^{-\frac{1}{2 \nu +1}}$, equation (\ref{eq:john})  implies that
$
R_4 = \opO \left( 2^{-2 j_{1} s^{\ast}}\right) = \opO \left(  (\frac{\log n}{n})^{-\frac{2s^{\ast}}{2 \nu +1}} \right). 
$
Note that in the case $p \geq 2$, then $s^{\ast} = s$ and thus $\frac{2s}{2\nu+1} > \frac{2s}{2s+2\nu+1}$. In the case $1 \leq p < 2$, then $s^{\ast} = s + 1/2-1/p$, and one can check that the conditions $s > 1/p$ and $s^{\ast} p > \nu (2-p)$ imply that $\frac{2 s^{\ast}}{2 \nu +1} >  \frac{2s}{2s+2\nu+1}$.
Hence in both cases one has that
\begin{equation}
R_4 = \opO \left(  n^{-\frac{2s}{2s+2 \nu +1}} \right). \label{eq:R4}
\end{equation}
\underline{Bound on $R_1$}: using Proposition \ref{prop:moments} and the inequality $2^{j_{0}} \leq  \log n $ it follows that
\begin{equation}
R_1 \leq C  \frac{2^{j_{0} (2\nu+1)}}{n} \leq C  \frac{ (\log n)^{2\nu+1}}{n} = \opO \left(   n^{ -\frac{2s}{2s+2 \nu +1}} \right).  \label{eq:R1}
\end{equation}

\noindent 
\underline{Bound on $R_2$ and $R_3$}.
remark that
$
R_2 \leq R_{21} + R_{22} \mbox{ and } R_3 \leq R_{31} + R_{32}
$
with
$$
R_{21} =  \sum_{j = j_{0}}^{j_{1}}  \sum_{k=0}^{2^{j}-1}   \E \left[ ( \hat{\beta}_{j,k} - \beta_{j,k})^2 \1_{\{ |\hat{\beta}_{j,k} - \beta_{j,k}| \geq \hat{s}_{j}(n)/2  \}} \right] , R_{22} =  \sum_{j = j_{0}}^{j_{1}}  \sum_{k=0}^{2^{j}-1} \E \left[ ( \hat{\beta}_{j,k} - \beta_{j,k})^2 \1_{\{ |\beta_{j,k}| \geq \hat{s}_{j}(n)/2  \}} \right],
$$
$$
R_{31} =   \sum_{j = j_{0}}^{j_{1}}  \sum_{k=0}^{2^{j}-1}   \E \left[  \beta_{j,k}^{2}  \1_{\{ |\hat{\beta}_{j,k} - \beta_{j,k}| \geq \hat{s}_{j}(n)/2  \}} \right]  \mbox{ and } R_{32} =   \sum_{j = j_{0}}^{j_{1}}  \sum_{k=0}^{2^{j}-1}  \E   \left[   \beta_{j,k}^{2}  \1_{\{ |\beta_{j,k}|  < \frac{3}{2} \hat{s}_{j}(n)  \}}  \right] . 
$$
Now, applying twice the Cauchy-Schwarz inequality, we get that
\begin{eqnarray*}
R_{21} + R_{31}  &=& \sum_{j = j_{0}}^{j_{1}}  \sum_{k=0}^{2^{j}-1}   \E \left[ \left( ( \hat{\beta}_{j,k} - \beta_{j,k})^2 +  \beta_{j,k}^{2}  \right) \1_{\{ |\hat{\beta}_{j,k} - \beta_{j,k}| \geq \hat{s}_{j}(n)/2  \}} \right] \\
& \leq &   \sum_{j = j_{0}}^{j_{1}}  \sum_{k=0}^{2^{j}-1}   \left( \left(\E  ( \hat{\beta}_{j,k} - \beta_{j,k})^{4} \right)^{1/2}+  \beta_{j,k}^{2}  \right) \left( \PP( |\hat{\beta}_{j,k} - \beta_{j,k}| \geq \hat{s}_{j}(n)/2  ) \right)^{1/2}
\end{eqnarray*}
\underline{Bound on $\PP( |\hat{\beta}_{j,k} - \beta_{j,k}| \geq \hat{s}_{j}(n)/2  )$}: using that $|c_{\ell}( \psi_{j,k} )| \leq 2^{-j/2} $ one has that $ \| \tilde{\psi}_{j,k} \|_2^2 \leq \sigma_{j}^2$ and   $ \| \tilde{\psi}_{j,k} \|_{\infty} \leq \epsilon_{j}$. Thus,
by definition of $\hat{s}_{j}(n)$   it follows that 
\begin{equation}
2 \Delta_{jk}^n(\gamma) \leq \hat{s}_{j}(n)/2    \label{eq:hats1}
\end{equation}
for all sufficiently large $n$ where $\Delta_{jk}^n(\gamma)$  is defined in (\ref{eq:deltajkn}).   Moreover, by \eqref{eq:Omegaj} there exists two constants $C_{1},C_{2}$ such that for all $\ell \in \Omega_{j}$, $C_{1} 2^{j}  \leq | \ell | \leq C_{2} 2^{j}$. Since $\lim_{|\ell| \to + \infty} \theta_{\ell} = 0$ uniformly for $f \in  B_{p,q}^{s}(A)$ it follows that as $j \to + \infty$
$$
V^{2}_{j} =   \| g \|_{\infty}  2^{-j}  \sum_{\ell \in \Omega_{j}} \frac{|\theta_{\ell}|^{2}}{ |\gamma_{\ell}|^{2}} = o\left( 2^{-j}  \sum_{\ell \in \Omega_{j}} |\gamma_{\ell}|^{-2} \right) = o\left( \sigma^{2}_{j} \right) \mbox{ and }  \delta_{j} =  2^{-j/2} \sum_{\ell \in \Omega_{j}}  \frac{|\theta_{\ell}|}{ |\gamma_{\ell}| }   = o \left( \epsilon_{j} \right).
$$
%Also, if $f \in  B_{p,q}^{s}(A)$ then $\sum_{\ell \in \Omega_{j}} |\theta_{\ell}|^{2} = o(1)$ as $j \to + \infty$. Thus by Cauchy-Schwarz inequality,  we have that as $j \to + \infty$
%
%$$
%\delta_{j} =  2^{-j/2} \sum_{\ell \in \Omega_{j}}  \frac{|\theta_{\ell}|}{ |\gamma_{\ell}| }   = o \left( \epsilon_{j} \right).
%$$
Now, define the non-random threshold
\begin{equation} \label{eq:thrfixed}
s_{j}(n) =   4 \left( \sqrt{ \sigma^{2}_{j}  \frac{2 \gamma \log n}{n}  \left(  \|g\|_{\infty}   \| \lambda \|_{1} + \delta \right)  } +  \frac{\gamma\log n}{3 n}  \epsilon_{j}\right) , \; \mbox{ for } \; j_{0}(n) \leq j \leq j_{1}(n).
\end{equation}
Using that $V^{2}_{j} = o( \sigma^{2}_{j} )$ and $\delta_{j}  = o \left( \epsilon_{j} \right)$ as $j \to + \infty$, and that $j_{0}(n) \to + \infty$ as $n \to + \infty$ it follows that for all sufficiently large $n$ and $j_{0}(n) \leq j \leq j_{1}(n)$
\begin{equation}
2 \left( \sqrt{\frac{2 V_j^2 \gamma \log n}{n}} + \delta_j \frac{\gamma \log n}{3n} \right) \leq s_{j}(n)/2 \label{eq:hats2}
\end{equation}
From equation \eqref{eq:lambda1} (see below), one has that
$
\mathbb{P}\left(\| \lambda \|_{1} \geq  \tilde{K}_{n} \right) \leq 2 n^{-\gamma},
$
which implies that $s_{j}(n) \leq \hat{s}_{j}(n)$ with probability larger than $1 - 2 n^{-\gamma}$. Hence, by inequalities  \eqref{eq:hats1} and \eqref{eq:hats2}, it follows that for all sufficiently large $n$
\begin{equation} \label{eq:hats3}
2 \max \left(\Delta_{jk}^n(\gamma), \sqrt{\frac{2 V_j^2 \gamma \log n}{n}} + \delta_j \frac{\gamma \log n}{3n} \right) \leq \hat{s}_{j}(n)/2
\end{equation}
with probability larger than $1 - 2 n^{-\gamma}$. Therefore, for all sufficiently large $n$, Proposition \ref{prop:dev} and inequality  \eqref{eq:hats3} imply that
\begin{equation}
\mathbb{P} \left(|\hat{\beta}_{j,k} - \beta_{j,k}|>   \hat{s}_{j}(n)/2 \right)  \leq  C n^{-\gamma}, \label{eq:randthr}
\end{equation}
for all $j_{0}(n) \leq j \leq j_{1}(n)$. \\

\noindent \underline{Bound on $R_{21} + R_{31}$:} thus, using the assumption that $\gamma \geq 2$, inequality  (\ref{eq:john}) and Proposition  \ref{prop:moments}, one has that for all sufficiently large $n$
$$
R_{21} + R_{31} \leq C  \frac{1}{n}  \left[ \sum_{j = j_{0}}^{j_{1}}  2^{j} \left( \frac{ 2^{4 j \nu}}{n^{2 }} \left( 1 + \frac{ 2^{j}}{n} \right) \right)^{1/2}   +  \sum_{j = j_{0}}^{j_{1}}   2^{-2js^{\ast}}  \right].
$$
By definition of $j_{1}$ one has that $\frac{ 2^{j}}{n} \leq C$ for all $j \leq j_{1}$, which implies that (since $s^{\ast} > 0$) 
\begin{equation}
R_{21} + R_{31} \leq C \frac{1}{n}  \left[ \sum_{j = j_{0}}^{j_{1}}  \frac{2^{j(2\nu +1)}}{n}  +  \sum_{j = j_{0}}^{j_{1}}   2^{-2js^{\ast}} \right]  = \opO( n^{-\frac{2s}{2s + 2 \nu +1}}) \label{eq:R21R31},
\end{equation}
using the fact that $\frac{2^{j(2\nu +1)}}{n} \leq C$ for all $j \leq j_{1}(n) \leq  \frac{1}{2 \nu +1} \log_{2} n$.

Finally, it remains to bound the term $T_{2} = R_{22} + R_{32}$. For this purpose, let $j_{2}$ be the largest integer such that
$
2^{j_{2}} \leq n^{\frac{1}{2s+2\nu+1}} (\log n)^{\beta} \mbox { with } \beta = -\frac{1}{2s + 2\nu +1},
$
and partition $T_{2}$ as $T_{2} = T_{21} + T_{22}$ where the first component $T_{21}$ is calculated over the resolution levels $j_{0} \leq j \leq j_{2}$ and the second  component $T_{22}$ is calculated over the resolution levels $j_{2} +1 \leq j \leq j_{1}$ (note that given our assumptions then $j_{2} \leq j_{1}$ for all sufficiently large $n$).  Using the definition of the threshold $\hat{s}_{j}(n)$ it follows that
\begin{equation}
\hat{s}_{j}(n)^{2}  \leq C \left( \sigma_{j}^2 (\| g \|_{\infty} \tilde{K}_{n} + \delta) \frac{\log(n)}{n} + \frac{(\log n)^2}{n^2} \epsilon_{j}^2 \right). \label{eq:boundthr}
\end{equation}
From Assumption \ref{ass:g} on the $\gamma_{\ell}$'s and equation \eqref{eq:Omegaj} for $\Omega_{j}$  it follows that
$$
\sigma_{j}^2 \leq C 2^{2j\nu}  \mbox{ and } \epsilon_{j} \leq C 2^{j(\nu+1/2)}.
$$
Since, for $2^j\frac{\log n}{n} \leq \left( \frac{\log n}{n} \right)^{-\frac{2 \nu}{ 2 \nu +1 }} $ all $j \leq j_{1}$, it follows that $\frac{(\log n)^2}{n^2} \epsilon_{j}^2 \leq C 2^{2j\nu} \frac{\log(n)}{n}$ and thus
\begin{equation}
\hat{s}_{j}(n)^{2}  \leq C 2^{2j\nu}  (\| g \|_{\infty} \tilde{K}_{n} + \delta + 1) \frac{\log(n)}{n}. \label{eq:boundthr}
\end{equation}
Using Proposition \ref{prop:moments}, the bound \eqref{eq:boundthr}, the fact that
\begin{equation}
\E \tilde{K}_{n} \leq \| \lambda_{1} \|_{1} + \opO  \left( \left( \frac{\log n}{n} \right)^{1/2} \right)  \label{eq:boundK}
\end{equation}
and the definition of $j_{2}$ one obtains that
\begin{eqnarray}
T_{21} \leq   \sum_{j = j_{0}}^{j_{2}}  \sum_{k=0}^{2^{j}-1} \left( \E ( \hat{\beta}_{j,k} - \beta_{j,k})^2   +     \frac{9}{4}  \E \hat{s}_{j}(n)^{2} \right)  & = &  \opO \left(  \frac{2^{j_{2}(2\nu+1)}}{n}  \log(n) \right) \nonumber  \\
& = & \opO \left( n^{-\frac{2s}{2s+2\nu+1}} (\log n)^{\frac{2s}{2s + 2\nu+1}} \right). \label{eq:boundT21}
\end{eqnarray}
Then, it remains to obtain a bound for $T_{22}$. Recall that
$
\hat{s}_{j}(n) \geq s_{j}(n)
$
with probability larger that $1 - 2 n^{-\gamma}$, where $s_{j}(n)$ is defined in \eqref{eq:thrfixed}. Therefore, by using Cauchy-Schwarz inequality one obtains that 
\begin{eqnarray*}
\E \left[ ( \hat{\beta}_{j,k} - \beta_{j,k})^2 \1_{\{ |\beta_{j,k}| \geq \hat{s}_{j}(n)/2  \}} \right] & \leq &   \E  ( \hat{\beta}_{j,k} - \beta_{j,k})^2 \1_{\{ |\beta_{j,k}| \geq s_{j}(n)/2   \}} \\
& & + \left( \E ( \hat{\beta}_{j,k} - \beta_{j,k})^{4} \right)^{1/2} \left(  \PP ( \hat{s}_{j}(n) \leq  s_{j}(n)  ) \right)^{1/2}
\end{eqnarray*}
Then, by Assumption \ref{ass:g} one has that $\sigma^2_{j} \geq C 2^{2j\nu}$. Therefore, using Proposition \ref{prop:moments}  it follows  that $ \E  ( \hat{\beta}_{j,k} - \beta_{j,k})^2 \leq C s^2_{j}(n)$ and that $ \E ( \hat{\beta}_{j,k} - \beta_{j,k})^{4} \leq C \frac{2^{4j\nu}}{{n^2}}$ for all $j \leq j_{1}$. Finally, using that $\gamma \geq 2$ and the fact that $\PP ( \hat{s}_{j}(n) \leq  s_{j}(n)  ) \leq 2 n^{-\gamma}$, one finally obtains that for any $j \leq j_{1}$
\begin{equation}
\E \left[ ( \hat{\beta}_{j,k} - \beta_{j,k})^2 \1_{\{ |\beta_{j,k}| \geq \hat{s}_{j}(n)/2  \}} \right] \leq C \left( \frac{s^2_{j}(n) }{4}  \1_{\{ |\beta_{j,k}| \geq s_{j}(n) /2  \}} + \frac{2^{2j \nu}}{n^{2}} \right) \label{eq:boundbeta}
\end{equation}
Let us first consider the case $p \geq 2$. Using inequality (\ref{eq:boundbeta})  one has that
\begin{eqnarray*}
T_{22} &  \leq &  C  \left( \sum_{j = j_{2}+1}^{j_{1}}  \sum_{k=0}^{2^{j}-1} \frac{ s^2_{j}(n) }{4} \1_{\{ |\beta_{j,k}| \geq  s_{j}(n)  /2 \}} + \frac{2^{2j \nu}}{n^{2}}   + |\beta_{j,k}|^{2}  \right) \\
& \leq & C \left(  \sum_{j = j_{2}+1}^{j_{1}}     \sum_{k=0}^{2^{j}-1}  |\beta_{j,k}|^{2}  + \frac{1}{n} \sum_{j = j_{2}+1}^{j_{1}}   \frac{2^{j (2\nu+1)}}{n}    \right).
\end{eqnarray*}
Then (\ref{eq:john}),  the definition of $j_{2}$, $j_{1}$ and the fact that $s^{\ast} = s$ imply that
\begin{equation}
T_{22}   = \opO \left( 2^{-2j_{2}s} + \frac{1}{n} \sum_{j = j_{2}+1}^{j_{1}}   \frac{2^{j (2\nu+1)}}{n}   \right) = \opO  \left( n^{-\frac{2s}{2s+2\nu+1}} (\log n)^{\frac{2s}{2s+2\nu+1}}  \right) \label{eq:boundT22_1}
\end{equation}
Now, consider the case $1 \leq p < 2$. Using again  inequality (\ref{eq:boundbeta})  one obtains that
\begin{eqnarray}
T_{22} &  \leq &  C \left( \sum_{j = j_{2}+1}^{j_{1}}  \sum_{k=0}^{2^{j}-1} \frac{s^{2}_{j}(n)}{4} \1_{\{ |\beta_{j,k}| \geq s_{j}(n)/2 \}}  + \frac{2^{2j \nu}}{n^{2}}   + \E |\beta_{j,k}|^{2}  \1_{\{ |\beta_{j,k}| < \frac{3}{2} \hat{s}_{j}(n) \}} \right) \nonumber \\
& \leq &   C \left(  \sum_{j = j_{2}+1}^{j_{1}}  \sum_{k=0}^{2^{j}-1} s_{j}(n)^{2-p}  |\beta_{j,k}|^{p} +  |\beta_{j,k}|^{p} \E \hat{s}_{j}(n)^{2-p}  + \frac{1}{n} \sum_{j = j_{2}+1}^{j_{1}}   \frac{2^{j (2\nu+1)}}{n}   \right) \label{eq:T22} 
\end{eqnarray}
By Hölder inequality, it follows that for any $\alpha > 1 $, $ \E \hat{s}_{j}(n)^{2-p} \leq  \left( \E \hat{s}_{j}(n)^{\alpha(2-p)} \right)^{1/\alpha} $. Hence, by taking $\alpha = 2/(2-p)$ it follows that $ \E \hat{s}_{j}(n)^{2-p} \leq \left( \E \hat{s}_{j}(n)^{2} \right)^{(2-p)/2} $. Then, using the following upper bounds (as a consequence of the definition of $ s^2_{j}(n)$ and the arguments used to derive inequalities \eqref{eq:boundthr}, \eqref{eq:boundK})
$$
s^2_{j}(n) \leq  C 2^{2j\nu} \frac{\log(n)}{n} \mbox{ and } \E \hat{s}_{j}(n)^{2}  \leq C 2^{2j\nu} \E \tilde{K}_{n} \frac{\log(n)}{n} \leq C 2^{2j\nu}  \frac{\log(n)}{n},
$$
it follows that inequality \eqref{eq:T22} and the fact that for $\lambda \in B_{p,q}^{s}(A)$,  $ \sum_{k=0}^{2^{j}-1}  |\beta_{j,k}|^{p} \leq C 2^{-jps^{\ast}}$ (with $ps^{\ast} =  ps + p/2-1$) imply that
\begin{eqnarray}
T_{22} 
&  \leq &   C \left( \sum_{j = j_{2}+1}^{j_{1}} 2^{2j\nu(1-p/2)} \left( \frac{\log(n)}{n} \right)^{1-p/2} 2^{-j ps^{\ast}}  + \frac{1}{n} \sum_{j = j_{2}+1}^{j_{1}}   \frac{2^{j (2\nu+1)}}{n}  \right) \nonumber \\
& \leq & C \left(  \left( \frac{\log n}{n} \right)^{1-p/2}  \sum_{j = j_{2}+1}^{j_{1}}  2^{j(\nu(2-p) - ps^{\ast})} + \frac{1}{n} \sum_{j = j_{2}+1}^{j_{1}}   \frac{2^{j (2\nu+1)}}{n}  \right)  \nonumber \\
&=& \opO \left(   \left( \frac{\log n }{n} \right)^{1-p/2}  2^{j_{2}(\nu(2-p) - ps^{\ast})} + \frac{1}{n} \sum_{j = j_{2}+1}^{j_{1}}   \frac{2^{j (2\nu+1)}}{n}   \right) \nonumber \\
& = &  \opO \left( n^{-\frac{2s}{2s+2\nu+1}}  (\log n)^{\frac{2s}{2s+2\nu+1}}\right) \label{eq:boundT22_2}
\end{eqnarray}
where we have used the assumption $\nu(2-p) < p s^{\ast}$ and the definition of $j_{2}$, $j_{1}$ for the last inequalities. Finally, combining the bounds     (\ref{eq:R4}), (\ref{eq:R1}), (\ref{eq:R21R31}), (\ref{eq:boundT21}), (\ref{eq:boundT22_1}) and (\ref{eq:boundT22_2}) completes the proof of Theorem \ref{theo:upperbound}.  \hfill $\square$

\subsubsection{Technical results}
\label{technicalupper}
Arguing as in the proof of Proposition 3 in \cite{MR2676894}, one has the following lemma:

\begin{lemma} \label{lem:psitilde}  Suppose that $g$ satisfies Assumption \ref{ass:g} and Assumption \ref{ass:gdecay}. Then, there exists a constants $C > 0$ such that for any $j \geq 0$ and $0 \leq k \leq 2^{j}-1$
$$
\| \tilde{\psi}_{j,k} \|_{\infty} \leq C 2^{j(\nu +1/2)} ,  \;  \| \tilde{\psi}_{j,k} \|^{2}_{2} \leq C 2^{2j\nu} \mbox{ and }   \| \tilde{\psi}_{j,k}^{2} \|^{2}_{2} \leq C 2^{j(4\nu+1)}. 
$$
\end{lemma}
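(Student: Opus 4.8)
The goal is to bound three quantities attached to the dual wavelet $\tilde\psi_{j,k} = \sum_{\ell\in\Omega_j}\gamma_\ell^{-1}c_\ell(\psi_{j,k})e_\ell$: its sup-norm, its $L^2$-norm, and the $L^2$-norm of its square. The starting point in every case is the two facts we already have on hand. First, from the band-limited structure of Meyer wavelets, $c_\ell(\psi_{j,k})=\int_0^1 e^{-2i\pi\ell t}\psi_{j,k}(t)\,dt$ is supported on $\Omega_j$, with $\#\Omega_j\le C2^j$, $\Omega_j\subset[-2^{j+2}c_0,-2^jc_0]\cup[2^jc_0,2^{j+2}c_0]$, and $|c_\ell(\psi_{j,k})|\le 2^{-j/2}$ (this last bound used in the main proof). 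Second, from Assumption \ref{ass:g}, for $\ell\in\Omega_j$ one has $|\ell|\asymp 2^j$, hence $|\gamma_\ell|^{-1}\le (C')^{-1}|\ell|^\nu\le C'' 2^{j\nu}$. Combining these gives $|\gamma_\ell^{-1}c_\ell(\psi_{j,k})|\le C 2^{j(\nu-1/2)}$ for each $\ell\in\Omega_j$.

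For the sup-norm, I would simply write $\|\tilde\psi_{j,k}\|_\infty\le \sum_{\ell\in\Omega_j}|\gamma_\ell^{-1}c_\ell(\psi_{j,k})|\le \#\Omega_j\cdot C 2^{j(\nu-1/2)}\le C'2^j\cdot 2^{j(\nu-1/2)}=C'2^{j(\nu+1/2)}$, which is the first claim. For the $L^2$-norm, Parseval in the (orthonormal) Fourier basis $(e_\ell)$ gives $\|\tilde\psi_{j,k}\|_2^2=\sum_{\ell\in\Omega_j}|\gamma_\ell|^{-2}|c_\ell(\psi_{j,k})|^2\le \#\Omega_j\cdot C 2^{2j\nu}\cdot 2^{-j}\le C'2^j\cdot C 2^{2j\nu}\cdot 2^{-j}=C''2^{2j\nu}$, giving the second claim. (Here the $2^{-j}$ from $|c_\ell|^2\le 2^{-j}$ cancels the $2^j$ from the cardinality of $\Omega_j$, which is why the $L^2$ bound does not carry the extra $2^j$ that the crude sup-norm bound has.)

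The third claim, $\|\tilde\psi_{j,k}^2\|_2^2\le C 2^{j(4\nu+1)}$, is the step requiring a little care, and I expect it to be the main obstacle. Note $\|\tilde\psi_{j,k}^2\|_2^2=\int_0^1|\tilde\psi_{j,k}(t)|^4\,dt=\|\tilde\psi_{j,k}\|_4^4$. The crudest route bounds $\|\tilde\psi_{j,k}\|_4^4\le \|\tilde\psi_{j,k}\|_\infty^2\|\tilde\psi_{j,k}\|_2^2\le C 2^{2j(\nu+1/2)}\cdot C 2^{2j\nu}=C'2^{j(4\nu+1)}$, using the first two bounds already established; this is exactly the claimed order, so in fact no subtler argument is needed — the interpolation inequality $\|f\|_4^4\le\|f\|_\infty^2\|f\|_2^2$ does the job cleanly. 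The one point to double-check is that $\tilde\psi_{j,k}$ is real-valued (or at least that working with $|\tilde\psi_{j,k}|$ causes no trouble), which follows from the symmetry properties of the Meyer wavelet Fourier coefficients, $\overline{c_\ell(\psi_{j,k})}=c_{-\ell}(\psi_{j,k})$ and $\gamma_{-\ell}=\overline{\gamma_\ell}$, so that $\overline{\tilde\psi_{j,k}}=\tilde\psi_{j,k}$. The reference to "arguing as in the proof of Proposition 3 in \cite{MR2676894}" suggests the authors want the same combination of the band-limited support, the cardinality bound $\#\Omega_j\le C2^j$, Assumption \ref{ass:g}, and the size bound on the Meyer coefficients; Assumption \ref{ass:gdecay} (the polynomial tail of $g$) is presumably invoked to control boundary/periodization effects in making the coefficient estimates $|c_\ell(\psi_{j,k})|\le 2^{-j/2}$ and $|\ell|\asymp 2^j$ uniform in $k$, so I would cite it at that point rather than in the norm computations themselves.
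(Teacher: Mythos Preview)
Your argument is correct and is exactly the elementary route one expects here: the triangle inequality together with $\#\Omega_j\le C2^j$, $|c_\ell(\psi_{j,k})|\le C2^{-j/2}$ and $|\gamma_\ell|^{-1}\le C2^{j\nu}$ for $\ell\in\Omega_j$ gives the sup-norm bound; Parseval gives the $L^2$ bound; and the interpolation $\|\tilde\psi_{j,k}\|_4^4\le\|\tilde\psi_{j,k}\|_\infty^2\|\tilde\psi_{j,k}\|_2^2$ then yields the third estimate. The paper does not give its own proof but defers to Proposition~3 of \cite{MR2676894}, where precisely this combination of band-limitedness, cardinality of $\Omega_j$, Assumption~\ref{ass:g}, and the Meyer coefficient bound is used, so your approach matches the intended one.

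One small remark: your proof uses only Assumption~\ref{ass:g}; Assumption~\ref{ass:gdecay} plays no role in these three inequalities (it is a hypothesis on the spatial decay of $g$, not on the $\gamma_\ell$), so your speculation about periodization effects is unnecessary here---the lemma statement simply carries the standing hypotheses of the section.
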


\begin{proposition} \label{prop:moments}
There exists $C > 0$ such that for any $j \geq 0$ and $0 \leq k \leq 2^{j}-1$
\begin{equation}
\mathbb{E}|\hat{c}_{j,k} - c_{j,k}|^2 \leq C \frac{2^{2 j \nu}}{n} \left(1 +  \|\lambda \|_{2}  \| g\|_{\infty} \right), \quad
\mathbb{E}|\hat{\beta}_{j,k} - \beta_{j,k}|^2 \leq C \frac{2^{2 j \nu}}{n} \left(1 +  \|\lambda \|_{2}  \| g\|_{\infty} \right),
\end{equation}
and
\begin{equation}
\mathbb{E}|\hat{\beta}_{j,k} - \beta_{j,k}|^4 \leq C \frac{2^{4 j \nu}}{n^2} \left(1 +\frac{2^{j}}{n}\right) \left( 1 +  \|\lambda \|_{2}^2  \| g\|_{\infty}^2 + \|\lambda \|_{2}  \| g\|_{\infty} + \|\lambda \|_{2}^2  \| g\|_{\infty}   \right).
\end{equation}
\end{proposition}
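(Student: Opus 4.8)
The plan is to express $\hat{\beta}_{j,k} - \beta_{j,k}$ directly as a centered linear functional of the point processes and then bound its moments using concentration properties of Poisson integrals. Starting from \eqref{eq:estimation} and the definition $\hat\theta_\ell = \gamma_\ell^{-1} y_\ell$ with $y_\ell$ given in \eqref{eq:y_k}, one writes
\begin{equation}
\hat{\beta}_{j,k} = \sum_{\ell \in \Omega_j} c_\ell(\psi_{j,k}) \gamma_\ell^{-1} \frac{1}{n}\sum_{i=1}^n \int_0^1 e_\ell(t)\, dN^i_t = \frac{1}{n}\sum_{i=1}^n \int_0^1 \tilde\psi_{j,k}(t)\, dN^i_t,
\end{equation}
recalling $\tilde\psi_{j,k} = \sum_{\ell\in\Omega_j}\gamma_\ell^{-1} c_\ell(\psi_{j,k}) e_\ell$. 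Since $\beta_{j,k} = \int_0^1 \psi_{j,k}(t)\lambda(t)dt$ and a short computation (using $\E[\int_0^1 e_\ell(t)dN^i_t] = \gamma_\ell\theta_\ell$ and Parseval) shows $\E \hat\beta_{j,k} = \beta_{j,k}$, we get the centered representation $\hat{\beta}_{j,k} - \beta_{j,k} = \frac{1}{n}\sum_{i=1}^n Z_{i}$ where $Z_i = \int_0^1 \tilde\psi_{j,k}(t)(dN^i_t - \lambda(t-\btau_i)dt) + (\int_0^1 \tilde\psi_{j,k}(t)\lambda(t-\btau_i)dt - \beta_{j,k})$, the first part being the centered Poisson integral given $\btau_i$ and the second the fluctuation of the conditional mean around its $\btau$-average. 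The same identities hold verbatim for $\hat c_{j,k}-c_{j,k}$ with $\tilde\psi_{j,k}$ replaced by $\tilde\phi_{j_0,k}$, which obeys the same $\ell^\infty$ and $L^2$ bounds.

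For the second-moment bound I would condition on $\btau_1,\dots,\btau_n$ and use the classical variance formula for a Poisson integral, $\var\!\big(\int_0^1 f\,dN^i \,\big|\,\btau_i\big) = \int_0^1 |f(t)|^2 \lambda(t-\btau_i)dt \le \|\tilde\psi_{j,k}\|_\infty^{?}$; more precisely $\le \|\tilde\psi_{j,k}\|_2^2 \|\lambda\|_\infty$ after extending $\lambda$ periodically — but since the bound is stated in terms of $\|\lambda\|_2\|g\|_\infty$ one instead integrates in $\btau$ first: $\E\int_0^1|\tilde\psi_{j,k}(t)|^2\lambda(t-\btau_i)dt = \int_0^1|\tilde\psi_{j,k}(t)|^2(\lambda\star g)(t)dt \le \|\tilde\psi_{j,k}\|_2^2 \|\lambda\star g\|_\infty \le \|\tilde\psi_{j,k}\|_2^2\|g\|_\infty\|\lambda\|_1 \le \|\tilde\psi_{j,k}\|_2^2\|g\|_\infty\|\lambda\|_2$. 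The fluctuation-of-the-mean term is handled by $\var(\int_0^1\tilde\psi_{j,k}(t)\lambda(t-\btau_i)dt) \le \E|\int \tilde\psi_{j,k}\lambda(\cdot-\btau_i)|^2$, and since $\int_0^1\tilde\psi_{j,k}(t)\lambda(t-\btau_i)dt = \sum_{\ell\in\Omega_j} c_\ell(\psi_{j,k})\gamma_\ell^{-1}e^{-2i\pi\ell\btau_i}\theta_\ell$, its second moment is $\sum_{\ell\in\Omega_j}|c_\ell(\psi_{j,k})|^2|\gamma_\ell|^{-2}|\theta_\ell|^2 \le \|\lambda\|_2^2 \max_{\ell\in\Omega_j}|\gamma_\ell|^{-2} \le C 2^{2j\nu}\|\lambda\|_2^2$ by Assumption \ref{ass:g}; this is the ``$1$'' contribution (after absorbing $\|\lambda\|_2^2$, which is $\le A$ on the ball). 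Dividing by $n$ because the $Z_i$ are i.i.d.\ centered (independent given the $\btau_i$, and the unconditional independence follows since the pairs $(\btau_i, N^i)$ are i.i.d.) gives the factor $1/n$ and Lemma \ref{lem:psitilde}'s bound $\|\tilde\psi_{j,k}\|_2^2 \le C 2^{2j\nu}$ finishes it.

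For the fourth-moment bound the strategy is the same decomposition $\hat\beta_{j,k}-\beta_{j,k} = \frac1n\sum_i Z_i$ with i.i.d.\ centered summands, so $\E|\frac1n\sum Z_i|^4 \le C(n^{-3}\E|Z_1|^4 + n^{-2}(\E|Z_1|^2)^2)$ by the standard expansion of a fourth moment of a sum of i.i.d.\ centered variables (the cross terms with a single $Z_i$ to the first power vanish). The term $(\E|Z_1|^2)^2 \cdot n^{-2}$ produces the $C 2^{4j\nu}n^{-2}$ piece with the quadratic-in-$\|\lambda\|_2\|g\|_\infty$ factors. For $\E|Z_1|^4$ I would again condition on $\btau_1$ and invoke the fourth-moment bound for centered Poisson integrals, $\E\big(\int f\,d\tilde N\big)^4 \le C\big(\int f^2 \lambda\big)^2 + C\int f^4\lambda$ (Rosenthal-type inequality for Poisson), giving a main term controlled by $\|\tilde\psi_{j,k}\|_2^4\|\lambda\star g\|_\infty^2$ and a term $\int|\tilde\psi_{j,k}|^4(\lambda\star g) \le \|\tilde\psi_{j,k}^2\|_2^2\|g\|_\infty\|\lambda\|_2 \le C 2^{j(4\nu+1)}\|g\|_\infty\|\lambda\|_2$ by Lemma \ref{lem:psitilde}; dividing the latter by $n^3$ yields the $2^{j}/n$ correction factor $C 2^{4j\nu}n^{-2}(2^j/n)$. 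The fluctuation-of-mean part of $Z_1$ is bounded in $L^4$ by its $L^\infty$ norm $\|\tilde\psi_{j,k}\|_\infty\|\lambda\|_1 \le C 2^{j(\nu+1/2)}\|\lambda\|_2$ (or more carefully by a fourth-moment of a bounded variable), which again contributes to the $2^{j(4\nu+1)}n^{-3}$-type term. The main obstacle is purely bookkeeping: keeping track of which combination of $\|\lambda\|_2$, $\|g\|_\infty$ appears with each of the $2^{2j\nu}$ vs.\ $2^{j(4\nu+1)}$ scalings so that the final bound collapses into the stated form — the probabilistic inputs (Poisson variance and fourth-moment inequalities, i.i.d.\ sum moments) are standard, and the deterministic inputs are exactly Lemma \ref{lem:psitilde} together with Assumption \ref{ass:g}.
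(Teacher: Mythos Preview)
Your approach coincides with the paper's: the same decomposition into a centered Poisson integral (the paper's $Z_2=\frac1n\sum_i\int\tilde\psi_{j,k}\,d\tilde N^i$) plus the fluctuation of the conditional mean coming from the random shifts (the paper's $Z_1=\sum_{\ell\in\Omega_j}(\tilde\gamma_\ell\gamma_\ell^{-1}-1)\theta_\ell c_\ell(\psi_{j,k})$), the Poisson variance formula for $\E|Z_2|^2$, Rosenthal's inequality together with the Poisson fourth-moment identity (the paper cites Proposition~6 in \cite{Reynaud}) for $\E|Z_2|^4$, and Lemma~\ref{lem:psitilde} for the sizes of $\|\tilde\psi_{j,k}\|_2^2$ and $\|\tilde\psi_{j,k}^2\|_2^2$ --- all of this is exactly what the paper does, only you group by $i$ whereas the paper groups by type.

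One small slip worth flagging: the second moment of your shift-fluctuation term $\sum_{\ell\in\Omega_j}c_\ell(\psi_{j,k})\gamma_\ell^{-1}e^{-2i\pi\ell\btau_i}\theta_\ell$ is \emph{not} the diagonal sum $\sum_{\ell}|c_\ell(\psi_{j,k})|^2|\gamma_\ell|^{-2}|\theta_\ell|^2$, because the characters $e^{-2i\pi\ell\btau_i}$ are not orthogonal under the law of $\btau_i$. A Cauchy--Schwarz bound $|\int\tilde\psi_{j,k}\lambda(\cdot-\btau_i)|\le\|\tilde\psi_{j,k}\|_2\|\lambda\|_2$ rescues the argument but leaves a $\|\lambda\|_2^2$ where the statement has the ``$1$''. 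The paper obtains the sharper constant by treating $Z_1$ globally and invoking concentration of the empirical characters $\tilde\gamma_\ell-\gamma_\ell$ (it refers to \cite{MR2676894}), which gives $\E|Z_1|^2\le C\,2^{2j\nu}/n$ and $\E|Z_1|^4\le C(2^{4j\nu}/n^2+2^{j(4\nu+1)}/n^3)$ with no $\|\lambda\|$-dependence in the constant.
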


\begin{proof}  We only prove the proposition for the wavelet coefficients $\hat{\beta}_{j,k}$ since the arguments are the same to prove the result for the scaling coefficients $\hat{c}_{j,k}$. Remark first that 
$
\hat{\beta}_{j,k} - \beta_{j,k} = \sum_{\ell  \in \Omega_j} c_\ell(\psi_{j,k}) (\hat{\theta}_\ell - \theta_\ell) =  Z_1 + Z_2,
$
where $Z_1$ and $Z_2$ are the centered variables
$$
Z_1: = \sum_{\ell  \in \Omega_j} (\tilde{\gamma}_\ell \gamma^{-1}_\ell -1) \theta_\ell c_\ell(\phi_{j,k}).
$$
and
$$
Z_2 :=  \frac{1}{n} 
\sum_{i=1}^n \int_{0}^1 \tilde{\psi}_{j,k} (t)  d \tilde{N}_t^{i} .
$$
where $d \tilde{N}_t^{i}  =  dN_t^{i} - \lambda(t-\btau_{i})dt$. \\

\noindent {\bf Control of the moments of $Z_{1}$}: by arguing as in the proof of Proposition 3 in \cite{MR2676894}, one obtains that  there exists a universal constant $C > 0$ such that
\begin{equation}\label{eq:Z1square}
\mathbb{E} |Z_1|^2 \leq C \frac{2^{2 j \nu}}{n}  \mbox{ and } \mathbb{E} |Z_1|^4 \leq C \left(\frac{2^{4 j \nu}}{n^2}+\frac{2^{j(4\nu+1)}}{n^3}\right).
\end{equation}
The main arguments  to obtain \eqref{eq:Z1square} rely on concentration inequalities on the variables $\btau_i,i=1,\ldots,n$.\\

\noindent
{\bf Control of the moments of $Z_{2}$}: using Lemma \ref{lem:psitilde} remark that
\begin{eqnarray*}
\mathbb{E} |Z_2|^2& =& \frac{1}{n^2} \sum_{i=1}^n \mathbb{E} \int_0^1 \tilde{\psi}^{2}_{j,k} (t)  \lambda(t-\btau_{i}) dt  = \frac{1}{n} \int_0^1\tilde{\psi}^{2}_{j,k} (t) \lambda \star g(t) dt,  \\
& \leq & C \frac{2^{2j \nu}}{n} \|\lambda \star g\|_{\infty} \leq C \frac{2^{2j \nu}}{n} \|\lambda \|_{2}  \| g\|_{\infty}.
\end{eqnarray*}
Let us now bound $\mathbb{E} |Z_2|^4$ by using Rosenthal's inequality \cite{MR0440354}
$$
\mathbb{E} \left| \sum_{i=1}^n Y_i\right|^{2p} \leq \left( \frac{16 p}{\log(2p)}\right)^{2p} \max \left\{\left(\sum_{i=1}^n \mathbb{E}  Y_i^2\right)^{p} ;  \sum_{i=1}^n \mathbb{E} |Y_i|^{2p}\right\},
$$
which is valid for independent, centered and real-valued random variables $(Y_i)_{i=1 \ldots,n}$.
We apply this inequality to
$
Y_i =   \int_{0}^{1} \tilde{\psi}_{j,k}(t) d\tilde{N}_t^{i}
$
with $p=2$. Conditionnaly to  $\btau_i$, using Proposition 6 in \cite{Reynaud} and the Jensen's inequality, it follows that
\begin{eqnarray*}
\E \left[ Y_i ^4 | \btau_{i} \right] & = &  \int_{0}^{1} \tilde{\psi}_{j,k}^{4}(t) \lambda(t-\btau_{i}) dt + 3 \left(   \int_{0}^{1} \tilde{\psi}_{j,k}^{2}(t) \lambda(t-\btau_{i}) dt \right)^2, \\
& \leq &  \int_{0}^{1} \tilde{\psi}_{j,k}^{4}(t) \left( \lambda(t-\btau_{i}) + 3 \lambda^{2}(t-\btau_{i}) \right) dt.
\end{eqnarray*}
Hence 
$
\mathbb{E} \sum_{i=1}^n Y_i ^4 \leq n  \int_{0}^{1} \tilde{\psi}_{j,k}^{4}(t) \left( \lambda \star g(t)  + 3\lambda^2 \star g(t) \right) dt.
$
Then, using Lemma \ref{lem:psitilde}
$
\mathbb{E} \sum_{i=1}^n Y_i ^4 \leq C n 2^{j(4\nu +1)} \left( \|\lambda \|_{2}  + \|\lambda \|_{2}^{2} \right)  \| g\|_{\infty}.
$
Using again Proposition 6 in \cite{Reynaud} and Lemma \ref{lem:psitilde} one obtains that $\mathbb{E}  Y_i^2 = \int_{0}^{1} \tilde{\psi}_{j,k}^{2}(t)  \lambda \star g(t) dt \leq C 2^{2j\nu}  \|\lambda \|_{2}    \| g\|_{\infty} $ which ends the proof of the proposition. \hfill $\square$
\end{proof} \\

%In the sequel, we will use the following trivial inequality
%$$
%\forall x \in [0;\alpha] \qquad e^{x} \leq 1+x+x^2 \qquad \text{with} \qquad \alpha \simeq 1.79.
%$$
%For any $j$, define the threshold $t_j$ as
%$$
%t_j =  \alpha \|\lambda\|_{\infty} 2^{j(\nu-1/2)}.
%$$

\begin{proposition} \label{prop:dev}
Assume that $\lambda \in \Lambda_{\infty}$ and let $\gamma > 0$.  Then, there exists a constant $C >0$ such that for any $j \geq 0$, $k \in \{0 \dots 2^{j}-1\}$ and all sufficiently large $n$
\begin{equation}
\mathbb{P} \left(|\hat{\beta}_{j,k} - \beta_{j,k}|> 2 \max \left(\Delta^n_{jk}(\gamma)) , \sqrt{\frac{2 V_j^2 \gamma \log n}{n}} + \delta_j \frac{\gamma \log n}{3n} \right)\right)  \leq C n^{-\gamma},
\end{equation}
where $\Delta^n_{jk}(\gamma)$ is defined in (\ref{eq:deltajkn}).
\end{proposition}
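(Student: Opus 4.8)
The plan rests on the decomposition $\hat\beta_{j,k}-\beta_{j,k}=Z_1+Z_2$ already introduced in the proof of Proposition~\ref{prop:moments}, where $Z_1=\sum_{\ell\in\Omega_j}(\tilde\gamma_\ell\gamma_\ell^{-1}-1)\theta_\ell c_\ell(\psi_{j,k})$ carries the randomness of the shifts and $Z_2=\frac1n\sum_{i=1}^n\int_0^1\tilde\psi_{j,k}(t)\,d\tilde N^i_t$, with $d\tilde N^i_t=dN^i_t-\lambda(t-\btau_i)\,dt$, carries the Poisson fluctuations. Since $|Z_1|+|Z_2|\le 2\max(|Z_1|,|Z_2|)$, a union bound reduces the statement to proving separately, for all large $n$, that $\PP\big(|Z_1|>\sqrt{2V_j^2\gamma\log n/n}+\delta_j\gamma\log n/(3n)\big)\le Cn^{-\gamma}$ and $\PP\big(|Z_2|>\Delta^n_{jk}(\gamma)\big)\le Cn^{-\gamma}$.

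\textbf{The term $Z_1$.} Writing $\tilde\gamma_\ell-\gamma_\ell=\frac1n\sum_i(e^{i2\pi\ell\btau_i}-\gamma_\ell)$ gives $Z_1=\frac1n\sum_{i=1}^n W_i$ with $W_i=\sum_{\ell\in\Omega_j}a_\ell(e^{i2\pi\ell\btau_i}-\gamma_\ell)$ and $a_\ell:=\gamma_\ell^{-1}\theta_\ell c_\ell(\psi_{j,k})$, the $W_i$ being i.i.d.\ and centered. Two observations make Bernstein's inequality applicable with the constants of the statement. First, $\big|\sum_{\ell\in\Omega_j}a_\ell e^{i2\pi\ell\btau_i}\big|\le\sum_{\ell\in\Omega_j}|a_\ell|\le 2^{-j/2}\sum_{\ell\in\Omega_j}|\theta_\ell|/|\gamma_\ell|=\delta_j$ (using $|c_\ell(\psi_{j,k})|\le 2^{-j/2}$), so the $W_i$ are differences of variables a.s.\ bounded by $\delta_j$. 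Second, since $g\ge 0$ the Toeplitz form $(\gamma_{\ell-\ell'})$ is nonnegative, so
\[
\E|W_i|^2\le\E\Big|\sum_{\ell\in\Omega_j}a_\ell e^{i2\pi\ell\btau_i}\Big|^2=\sum_{\ell,\ell'\in\Omega_j}a_\ell\overline{a_{\ell'}}\gamma_{\ell-\ell'}=\int_0^1 g(t)\Big|\sum_{\ell\in\Omega_j}a_\ell e_\ell(t)\Big|^2dt\le\|g\|_\infty\sum_{\ell\in\Omega_j}|a_\ell|^2\le V_j^2,
\]
the last steps by Parseval and $|c_\ell(\psi_{j,k})|^2\le 2^{-j}$. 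Bernstein's inequality for i.i.d.\ averages (exactly as in the proof of Proposition~3 of \cite{MR2676894}) then gives $\PP\big(|Z_1|\ge\sqrt{2V_j^2x/n}+\delta_j x/(3n)\big)\le 2e^{-x}$, and $x=\gamma\log n$ concludes.

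\textbf{The term $Z_2$.} I would condition on $\btau_1,\dots,\btau_n$: the superposition $N=\bigcup_{i=1}^n N^i$ is then a Poisson process on $[0,1]$ with intensity $\sum_i\lambda(\cdot-\btau_i)$ and $nZ_2=\int_0^1\tilde\psi_{j,k}(t)\,d\big(N_t-\sum_i\lambda(t-\btau_i)\,dt\big)$, so a Bernstein-type deviation inequality for Poisson processes (see \cite{Reynaud}) yields, conditionally, $\PP\big(|Z_2|\ge\frac1n\sqrt{2x\,\tilde v_n}+\frac{x}{3n}\|\tilde\psi_{j,k}\|_\infty\big)\le 2e^{-x}$, where $\tilde v_n:=\sum_{i=1}^n\int_0^1\tilde\psi_{j,k}^2(t)\lambda(t-\btau_i)\,dt$. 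Taking $x=\gamma\log n$ and noting that $\|\tilde\psi_{j,k}\|_\infty$ already matches the linear part of $\Delta^n_{jk}(\gamma)$ in \eqref{eq:deltajkn}, it remains to control the unobserved conditional variance: $\frac1n\tilde v_n=\frac1n\sum_iU_i$ with $U_i:=\int_0^1\tilde\psi_{j,k}^2(t)\lambda(t-\btau_i)\,dt$ i.i.d., a.s.\ bounded by $\|\tilde\psi_{j,k}\|_\infty^2\|\lambda\|_1$ (periodicity of $\lambda$), with mean $\E U_i=\int_0^1\tilde\psi_{j,k}^2(\lambda\star g)\le\|\lambda\star g\|_\infty\|\tilde\psi_{j,k}\|_2^2\le\|g\|_\infty\|\lambda\|_1\|\tilde\psi_{j,k}\|_2^2$. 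Bernstein's inequality for $\frac1n\sum_iU_i$, the orders of $\|\tilde\psi_{j,k}\|_\infty$ and $\|\tilde\psi_{j,k}\|_2$ under Assumption~\ref{ass:g}, the constraint $j\le j_1(n)$, and the property $u_n(\gamma)=o(\gamma\log n/n)$ then give, with probability $\ge 1-n^{-\gamma}$, $\frac1n\tilde v_n\le\|\tilde\psi_{j,k}\|_2^2\big(\|g\|_\infty\|\lambda\|_1+nu_n(\gamma)/(2\gamma\log n)\big)$. Combining this with $\PP(\|\lambda\|_1\ge\tilde K_n(\gamma))\le 2n^{-\gamma}$ from \eqref{eq:lambda1} turns the square-root in the conditional bound into the square-root of $\Delta^n_{jk}(\gamma)$, whence $\PP(|Z_2|>\Delta^n_{jk}(\gamma))\le Cn^{-\gamma}$, using $\|\tilde\psi_{j,k}\|_2^2\le\sigma_j^2$, $\|\tilde\psi_{j,k}\|_\infty\le\epsilon_j$ to close up.

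\textbf{Main obstacle.} The crux is this last step: the conditional variance $\frac1n\tilde v_n$ of $Z_2$ is not observable and must be dominated by the threshold ingredient $\|\tilde\psi_{j,k}\|_2^2\|g\|_\infty\tilde K_n(\gamma)$. The deterministic bound $U_i\le\|\tilde\psi_{j,k}\|_\infty^2\|\lambda\|_1$ is too lossy — it overshoots by the factor $\|\tilde\psi_{j,k}\|_\infty^2/\|\tilde\psi_{j,k}\|_2^2\sim 2^j$ forced by Assumption~\ref{ass:g} — so one genuinely needs the concentration of $\frac1n\sum_iU_i$ around $\int_0^1\tilde\psi_{j,k}^2(\lambda\star g)$, together with $\|\lambda\star g\|_\infty\le\|g\|_\infty\|\lambda\|_1$ (which uses $\|g\|_\infty<\infty$ from Assumption~\ref{ass:gdecay} and $\lambda\in\Lambda_\infty$), the empirical upper bound $\tilde K_n(\gamma)$ on $\|\lambda\|_1$, and the slack $u_n(\gamma)$ in \eqref{eq:deltajkn} to soak up the $\opO\big(2^{j/2}(\log n/n)^{1/2}+2^j\log n/n\big)$ fluctuation of the empirical variance (which is $o(\|\tilde\psi_{j,k}\|_2^2\,nu_n(\gamma)/(\gamma\log n))$ precisely because $j\le j_1(n)$). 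A secondary nuisance is keeping the constants $2$ and $1/3$ sharp enough that the two bounds fit \emph{literally} under $\Delta^n_{jk}(\gamma)$ and under $\sqrt{2V_j^2\gamma\log n/n}+\delta_j\gamma\log n/(3n)$ rather than merely up to a constant; for $Z_1$ this is exactly why one applies Bernstein with the a.s.\ bound $\delta_j$ on $\sum_\ell a_\ell e^{i2\pi\ell\btau_i}$ rather than the bound $2\delta_j$ on $W_i$ itself.
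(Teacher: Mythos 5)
Your proposal is correct and follows essentially the same route as the paper: the same decomposition $\hat\beta_{j,k}-\beta_{j,k}=Z_1+Z_2$, Bernstein's inequality for $Z_1$ (you supply the details that the paper delegates to \cite{MR2676894}), the conditional Bennett/Bernstein inequality for Poisson processes for $Z_2$, and then a concentration bound for the unobservable conditional variance $M^n_{jk}=\frac1n\sum_i\int_0^1\tilde\psi_{j,k}^2(t)\lambda(t-\btau_i)dt$ combined with the empirical bound \eqref{eq:lambda1} on $\|\lambda\|_1$. The one place you deviate is the almost-sure bound fed into Bernstein for $M^n_{jk}$: you use $U_i\le\|\tilde\psi_{j,k}\|_\infty^2\|\lambda\|_1$, which costs a factor of order $2^j$ and forces you to invoke $j\le j_1(n)$ to absorb the fluctuation into $u_n(\gamma)$, whereas the paper uses $U_i\le\|\lambda\|_\infty\|\tilde\psi_{j,k}\|_2^2$ (this is where the hypothesis $\lambda\in\Lambda_\infty$ is actually consumed), making the relative fluctuation $\log n\bigl(\frac{\gamma\log n}{3n}+\sqrt{\frac{2\gamma\log n}{n}}\bigr)$ uniform in $j$ and hence matching the statement's ``for any $j\ge0$'' with a $j$-independent $u_n(\gamma)$; since the proposition is only ever applied for $j_0(n)\le j\le j_1(n)$, this difference is immaterial for the theorem.
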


\begin{proof}

Using the notations introduced in the proof of Proposition \ref{prop:moments}, write $\hat{\beta}_{j,k}-\beta_{j,k} = Z_1+Z_2$ and remark that for any $u > 0$
\begin{equation}
\mathbb{P}(|Z_1+Z_2|> u) \leq \mathbb{P}(|Z_1|> u/2)+\mathbb{P}(|Z_2|> u/2) \label{eq:Z1Z2}
\end{equation}
Now, arguing as in Proposition 4 in \cite{MR2676894} and using Bernstein's inequality,  one has immediately that 
\begin{equation}
\mathbb{P}\left( |Z_1|> \sqrt{\frac{2 V_j^2 \gamma\log n}{n}} + \delta_j \frac{\gamma\log n}{3n}\right) \leq 2n^{-\gamma}. \label{eq:Z1}
\end{equation}

Let us now control the deviation of $Z_2  = \frac{1}{n} \sum_{i=1}^n \int_{0}^1 \tilde{\psi}_{j,k} (t) d\tilde{N}_t^{i}$. First, remark that conditionnaly to the shifts $\btau_1,\dots,\btau_n$, the process $\sum_{i=1}^n N^i$ is a Poisson process with intensity $\sum_{i=1}^n \lambda(.-\btau_i)$. For the sake of convenience, we introduce some additionnal notations. For $n \geq 1$, $j \geq 0$ and $0 \leq k \leq 2^{j}-1$, 
define
$$ M_{jk}^n = \frac{1}{n} \sum_{i=1}^n \int_0^1 \tilde \psi_{jk}^2(t) \lambda(t-\btau_i)dt, \ \mathrm{and} \ M_{jk} = \E M_{jk}^n = \int_0^1 \tilde \psi^2_{jk}(t) \lambda \star g(t)dt .
$$  

Using  an analogue of Bennett's inequality for Poisson processes (see e.g. Proposition 7 in \cite{Reynaud}), we get that for any $s>0$
\begin{equation} \label{eq:Bennett}
\PP \left( |Z_2|    >  \sqrt{\frac{2 s}{n} M_{jk}^n}     +  \frac{s}{3 n}     \| \tilde{\psi}_{j,k}\|_{\infty} \big| \btau_{1},\ldots,\btau_{n} \right) \leq 2 \exp\left(-s \right)
\end{equation}
Remark that the quantity $M_{jk}^n$ is not computable from the data as its depends on $\lambda$ and the unobserved shifts $\btau_{1},\ldots,\btau_{n}$. Nevertheless it is possible to compute a data-based upper bound for $M_{jk}^n$. Indeed,  note that Bernstein's inequality (see e.g. Proposition 2.9 in \cite{massart}) implies that 
$$
\mathbb{P} \left( M_{jk}^n > M_{jk} + \tilde{M}_{jk} \left( \frac{\gamma \log n}{ 3 n} + \sqrt{ \frac{2 \gamma \log n}{ n}  } \right)  \right) \leq n^{-\gamma}.
$$
with $\tilde{M}_{jk} = \|\lambda\|_{\infty} \|\tilde{\psi}_{j,k}\|_2^2$. Obviously, $\tilde{M}_{jk}$ is  unknown but for all sufficiently large $n$, one has that
$$
\tilde{M}_{jk} = \|\lambda\|_{\infty} \|\tilde{\psi}_{j,k}\|_2^2 \leq \log n \|\tilde{\psi}_{j,k}\|_2^2.
$$
Moreover, remark that
$
M_{jk} =  \|\psi_{jk} \sqrt{\lambda\star g} \|_2^2  \leq \|\psi_{jk} \|_2^2 \| g\|_{\infty} \|\lambda\|_1.
$
Hence, 
\begin{equation} \label{eq:Mjkn}
\mathbb{P} \left( M_{jk}^n > \| \tilde{\psi}_{j,k} \|_{2}^{2} \left( \| g \|_{\infty} \| \lambda \|_{1}  + \left( \frac{\gamma (\log n)^2}{ 3 n} + \sqrt{ \frac{2 \gamma (\log n)^3 }{ n}  } \right) \right) \right) \leq n^{-\gamma}.
\end{equation}
To obtain a data-based upper bound for $M_{jk}^n$, it remains to obtain an upper bound for $\| \lambda \|_{1}$. Recall that we have denoted by $K_{i}$ the number of points of the process $N^i$. Conditionally to $\btau_{i}$,  $K_{i}$ is real random variable that follows a Poisson distribution with intensity $\int_{0}^{1} \lambda(t-\btau_{i}) dt$. Since $\lambda$ is assumed to be periodic with period 1, it follows that for any $i=1,\ldots,n$, $\int_{0}^{1} \lambda(t-\btau_{i}) dt =  \int_{0}^{1} \lambda(t) dt$, and thus $(K_{i})_{i=1,\ldots,n}$ are i.i.d.\ random variables following  a Poisson distribution with intensity  $\| \lambda \|_{1} = \int_{0}^{1} \lambda(t) dt$. Using standard arguments to derive concentration inequalities one has that for any $u > 0$
$$
\mathbb{P}\left(\| \lambda \|_{1} \geq  \frac{1}{n}  \sum_{i=1}^{n} K_{i} + \sqrt{\frac{2 u \| \lambda \|_{1} }{n}} + \frac{u}{3 n}  \right) \leq 2 \exp(-u),
$$ 
where $\| \lambda \|_{1} =  \int_{0}^{1} \lambda(t) dt$. Now, define the function $h(y) = y^2-\sqrt{2a}y-a/3$ for $y \geq 0$ and with $a = u/n$. Then, the above inequality can be written as
$$
\mathbb{P}\left(h\left( \sqrt{\| \lambda \|_{1}} \right) \geq  \frac{1}{n}  \sum_{i=1}^{n} K_{i}  \right) \leq 2 \exp(-u).
$$
Since $h$ restricted on $[\sqrt{a}(\sqrt{30}+3\sqrt{2})/6;+\infty[$ is invertible with $h^{-1}(y) = \sqrt{y + \frac{5 a}{6}} + \sqrt{\frac{a}{2}}$ it follows that for $u =  \gamma \log n$ and all sufficiently large $n$
\begin{equation} \label{eq:lambda1}
\mathbb{P}\left(\| \lambda \|_{1} \geq  \bar{K}_{n} + \frac{4 \gamma \log n }{3n} + \sqrt{\frac{2  \gamma \log n }{n}  \bar{K}_{n}  + \frac{5  \gamma^{2} (\log n )^{2}}{3n^2}  } \right) \leq 2 n^{-\gamma},
\end{equation}
where $\bar{K}_{n} = \frac{1}{n}  \sum_{i=1}^{n} K_{i}$. Therefore, using \eqref{eq:Mjkn} it follows that
\begin{equation} \label{eq:Mjkn2}
\mathbb{P} \left( M_{jk}^n > \| \tilde{\psi}_{j,k} \|_{2}^{2} \left( \| g \|_{\infty}  \tilde{K}_n(\gamma) + \left( \frac{\gamma (\log n)^2}{ 3 n} + \sqrt{ \frac{2 \gamma (\log n)^3 }{ n}  } \right) \right) \right) \leq 3 n^{-\gamma},
\end{equation}
where $ \tilde{K}_n(\gamma)$ is defined in \eqref{eq:tildeK}. Hence, combining \eqref{eq:Bennett} with $s = \gamma\log n$ and \eqref{eq:Mjkn2} we obtain that
\begin{equation} \label{eq:Z2}
\PP \left( |Z_2|    >  \sqrt{\frac{2 \gamma\log n}{n}  \| \tilde{\psi}_{j,k} \|_{2}^{2} \left( \| g \|_{\infty}  \tilde{K}_n(\gamma) + \left( \frac{\gamma (\log n)^2}{ 3 n} + \sqrt{ \frac{2 \gamma (\log n)^3 }{ n}  } \right) \right)  }     +  \frac{\gamma\log n}{3 n}     \| \tilde{\psi}_{j,k}\|_{\infty} \right) \leq 5 n^{-\gamma}
\end{equation}
Combining inequalities \eqref{eq:Z1Z2},  \eqref{eq:Z1} and \eqref{eq:Z2} concludes the proof. \hfill $\square$ \\

\end{proof}

%%%%%%%%%%%%%
%\input{borneinf.tex}
%%%%%%%%%%%%%

\section{Lower bound on the minimax risk}\label{sec:lowerbound}

\subsection{Main result}

\begin{theo}
\label{th:borneinf}
Suppose that  $g$ satisfies Assumption \ref{ass:g} and Assumption \ref{ass:gdecay}. Introduce the class of functions
 $$
 \Lambda_{0} = \left\{ \lambda \in L^{2}([0,1]); \; \lambda(t) \geq 0  \mbox{ for all } t \in [0,1] \right\}.
 $$
Let $1 \leq p \leq \infty$, $1 \leq q \leq \infty$, $A > 0$ and assume that
 $
 s > 2 \nu +1.
 $
Then, there exists a constant  $C_0>0$ (independent of $n$) such that for all sufficiently large $n$
$$ \inf_{\hat \lambda_{n}} \sup_{\lambda \in B^s_{p,q}(A) \bigcap  \Lambda_{0}} \mathcal{R}(\hat{\lambda}_{n},\lambda) \geq C_0 n^{-\frac{2s}{2s+2\nu+1}} ,$$
where the above infimum is taken over the set of all possible estimators $\hat\lambda_{n} \in L^{2}([0,1]) $ of the  intensity $\lambda$ (i.e the set of all measurable mapping of the random processes $N^{i}, \; i=1,\ldots,n$ taking their value in $L^{2}([0,1])$).
\end{theo}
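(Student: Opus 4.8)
The plan is to derive the lower bound by a version of Assouad's cube lemma, with the family of competing intensities designed so that telling them apart is as hard as a deconvolution problem of degree of ill‑posedness $\nu$. Let $j_n$ be the largest integer with $2^{j_n}\leq n^{1/(2s+2\nu+1)}$, and for $\epsilon=(\epsilon_k)_{0\leq k<2^{j_n}}\in\{0,1\}^{2^{j_n}}$ put
\[
\lambda_\epsilon \;=\; c_0 \;+\; \rho_n\sum_{k=0}^{2^{j_n}-1}\epsilon_k\,\psi_{j_n,k},
\qquad \rho_n \;=\; c_1\,2^{-j_n(s+1/2)},
\]
for two positive constants $c_0,c_1$ fixed later. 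Since the perturbation has sup‑norm at most of order $\rho_n2^{j_n/2}\asymp 2^{-j_ns}\to0$, for $n$ large each $\lambda_\epsilon$ lies between $c_0/2$ and $2c_0$, so $\lambda_\epsilon\in\Lambda_0$; by the wavelet characterisation of the Besov norm recalled in Section \ref{sec:adapt}, the Besov seminorm of the perturbation is of order $2^{j_n(s+1/2)}\rho_n=c_1$, hence $\lambda_\epsilon\in B^s_{p,q}(A)$ provided $c_0,c_1$ are small enough; and by (quasi‑)orthonormality of $(\psi_{j_n,k})_k$ one has $\|\lambda_\epsilon-\lambda_{\epsilon'}\|_2^2\asymp\rho_n^2\,d_H(\epsilon,\epsilon')$, with $d_H$ the Hamming distance. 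Assouad's lemma then gives a bound of the form
\[
\inf_{\hat\lambda_n}\ \sup_{\lambda\in B^s_{p,q}(A)\cap\Lambda_0}\mathcal{R}(\hat\lambda_n,\lambda)
\;\geq\; \kappa\,2^{j_n}\rho_n^2\ \min_{d_H(\epsilon,\epsilon')=1}\bigl(1-\|P^{(n)}_\epsilon-P^{(n)}_{\epsilon'}\|_{\mathrm{TV}}\bigr),
\]
for a numerical constant $\kappa>0$, where $P^{(n)}_\epsilon$ is the law of $(N^1,\dots,N^n)$ when the shared intensity is $\lambda_\epsilon$, and where $2^{j_n}\rho_n^2\asymp c_1^2\,2^{-2j_ns}\asymp c_1^2\,n^{-2s/(2s+2\nu+1)}$ is precisely the announced rate. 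It thus remains to keep the total variation distances bounded away from $1$ for neighbouring $\epsilon,\epsilon'$.

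Because, conditionally on the shifts, the processes $N^i$ are independent and the shifts are i.i.d., the trajectories $N^1,\dots,N^n$ are i.i.d. with common distribution the mixture $Q_\epsilon=\int_{\R}\Pi_{\lambda_\epsilon(\cdot-\tau)}\,g(\tau)\,d\tau$, where $\Pi_\mu$ denotes the law of a Poisson process on $[0,1]$ with intensity $\mu$; hence $P^{(n)}_\epsilon=Q_\epsilon^{\otimes n}$. Using $\|P^{(n)}_\epsilon-P^{(n)}_{\epsilon'}\|_{\mathrm{TV}}^2\leq\tfrac12\,\mathrm{KL}(Q_{\epsilon'}^{\otimes n}\,\|\,Q_\epsilon^{\otimes n})=\tfrac n2\,\mathrm{KL}(Q_{\epsilon'}\,\|\,Q_\epsilon)$ (or the analogous bound through the Hellinger distance), everything comes down to proving that $n\,\mathrm{KL}(Q_{\epsilon'}\,\|\,Q_\epsilon)$ stays bounded, uniformly in $n$ and in neighbouring $\epsilon,\epsilon'$, and then taking $c_1$ small enough for this bound to be as small as needed. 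This is exactly the point at which the random shifts must be handled with care: $Q_\epsilon$ is a genuine \emph{mixture} of Poisson laws, not itself a Poisson process, so the explicit Poisson formula for the Kullback–Leibler divergence does not apply, and the naive bounds coming from the joint convexity of $\mathrm{KL}$ or of the squared Hellinger distance — which amount to comparing the shifted intensities pointwise, coupling $\tau$ with $\tau$ — lose the ill‑posedness factor $2^{-2j_n\nu}$ and are far too crude.

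To recover that factor one uses the exponential structure of Poisson likelihood ratios. Set $h_\epsilon=\lambda_\epsilon-c_0$; since all $\lambda_\epsilon$ are comparable to the constant $c_0$ for $n$ large, work with $L^\epsilon_\tau=d\Pi_{\lambda_\epsilon(\cdot-\tau)}/d\Pi_{c_0}$, for which the exponential‑martingale identity gives $\E_{\Pi_{c_0}}\bigl[L^\epsilon_\tau\,L^{\epsilon'}_{\tau'}\bigr]=\exp\bigl(c_0^{-1}\int_0^1 h_\epsilon(t-\tau)\,h_{\epsilon'}(t-\tau')\,dt\bigr)$. Writing $dQ_\epsilon/d\Pi_{c_0}=\E_\tau[L^\epsilon_\tau]$ and expanding the exponentials, Parseval's formula turns the inner products into $\int_0^1 h_\epsilon(t-\tau)h_{\epsilon'}(t-\tau')\,dt=\sum_\ell c_\ell(h_\epsilon)\overline{c_\ell(h_{\epsilon'})}\,e^{-2i\pi\ell(\tau-\tau')}$, and averaging over the \emph{independent} shifts $\tau,\tau'$ brings out the weights $|\gamma_\ell|^2=|\E\,e^{2i\pi\ell\btau_1}|^2$, which by Assumption \ref{ass:g} are of order $|\ell|^{-2\nu}\asymp 2^{-2j_n\nu}$ on the Fourier support $\Omega_{j_n}$ of the perturbations. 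For neighbours differing only at $k_0$, so that $h_{\epsilon'}-h_\epsilon=\pm\rho_n\psi_{j_n,k_0}$, the leading term of $\mathrm{KL}(Q_{\epsilon'}\,\|\,Q_\epsilon)$ is of order $c_0^{-1}\,2^{-2j_n\nu}\,\|\rho_n\psi_{j_n,k_0}\|_2^2\asymp c_0^{-1}\,2^{-2j_n\nu}\rho_n^2$, so that $n\,\mathrm{KL}(Q_{\epsilon'}\,\|\,Q_\epsilon)\asymp c_0^{-1}c_1^2\,n\,2^{-j_n(2s+2\nu+1)}\asymp c_0^{-1}c_1^2$, made small by choosing $c_1$ small. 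The delicate part is to show that all higher‑order terms in these Taylor expansions are negligible in front of the leading one, uniformly over neighbouring $\epsilon,\epsilon'$; this is where the standing hypothesis $s>2\nu+1$ is used, since it forces the amplitude $\rho_n$ — equivalently $\|h_\epsilon\|_\infty\asymp 2^{-j_ns}$ — to be small enough, relative to $2^{-2j_n\nu}$, for the remainders (controlled by geometric‑type sums in $\|h_\epsilon\|_2^2/c_0$) to be $o(1/n)$.

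Combining the three steps: for $c_1$ chosen small enough, $\min_{d_H(\epsilon,\epsilon')=1}(1-\|P^{(n)}_\epsilon-P^{(n)}_{\epsilon'}\|_{\mathrm{TV}})\geq 1/2$ for all large $n$, and therefore $\inf_{\hat\lambda_n}\sup_\lambda\mathcal{R}(\hat\lambda_n,\lambda)\geq\tfrac{\kappa}{2}\,2^{j_n}\rho_n^2\geq C_0\,n^{-2s/(2s+2\nu+1)}$. I expect the third step to be the main obstacle — bounding the Kullback–Leibler (or Hellinger) divergence between the shift‑mixture laws $Q_\epsilon$, and in particular controlling the higher‑order terms of the relevant expansions uniformly — which is exactly what the restriction $s>2\nu+1$ buys, and what it means here that Assouad's cube technique ``has to be carefully adapted'' to the presence of the random shifts.
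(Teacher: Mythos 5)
Your overall strategy (Assouad's cube with Meyer wavelet perturbations at scale $2^{j_n}\asymp n^{1/(2s+2\nu+1)}$ and amplitude $\rho_n\asymp 2^{-j_n(s+1/2)}$, with the shifts producing the $|\gamma_\ell|^2\asymp 2^{-2j_n\nu}$ attenuation) is the same as the paper's, and your first two steps are fine; the positivity of $\lambda_\epsilon$ via smallness of $\|h_\epsilon\|_\infty$ is a legitimate simplification of the paper's device of adding the constant $\xi_D 2^{D/2}c(\psi)$. Where you genuinely diverge is in how the hypotheses are separated. The paper does \emph{not} go through KL or total variation between the mixture laws $Q_\epsilon$: it keeps the reduction at the level of the likelihood ratio $\mathcal{Q}_{k,\omega}(N)$ of the two shift-averaged Poisson likelihoods (Lemmas \ref{lowertrick} and \ref{calcul}), uses the elementary bound $\E_{\lambda_\omega}[1\wedge\mathcal{Q}_{k,\omega}]\geq\delta\,\PP_{\lambda_\omega}(\mathcal{Q}_{k,\omega}\geq\delta)$, and proves the latter probability is bounded below by Taylor-expanding $\log$ and $\exp$ inside the Girsanov formula and controlling each term by Bennett/Bernstein/Chebyshev. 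The condition $s>2\nu+1$ enters exactly as you guessed, to make the cubic remainders $O_p(nm_{D_n}^3)=o(1)$.

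The concrete gap in your route is the passage from your exponential-martingale computation to a tensorizable divergence. The identity $\E_{\Pi_{c_0}}[L^\epsilon_\tau L^{\epsilon'}_{\tau'}]=\exp(c_0^{-1}\langle h_\epsilon(\cdot-\tau),h_{\epsilon'}(\cdot-\tau')\rangle)$ is correct and, after averaging over independent $\tau,\tau'$ and expanding, it yields
$\|dQ_{\epsilon'}/d\Pi_{c_0}-dQ_\epsilon/d\Pi_{c_0}\|^2_{L^2(\Pi_{c_0})}\asymp c_0^{-1}\rho_n^2\,2^{-2j_n\nu}\asymp n^{-1}$. But this is the squared $L^2(\Pi_{c_0})$ distance of the densities, not $\mathrm{KL}(Q_{\epsilon'}\|Q_\epsilon)$ nor $\chi^2(Q_{\epsilon'},Q_\epsilon)$. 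By Cauchy--Schwarz it only gives $\|Q_{\epsilon'}-Q_\epsilon\|_{\mathrm{TV}}\lesssim n^{-1/2}$, and total variation does not tensorize; converting to KL, $\chi^2$ or Hellinger at the required rate $n^{-1}$ needs a lower bound on $dQ_\epsilon/d\Pi_{c_0}$, which fails uniformly: this density equals $\E_\tau\exp\bigl(\int_0^1\log(1+h_\epsilon(t-\tau)/c_0)\,dN_t\bigr)$ and can be exponentially small in the number of observed points. So your step three, as written, does not close; it would require an additional truncation/localization on the event that the point counts are moderate (or a direct treatment of the product likelihood ratio, which is in effect what the paper does by working with $\mathcal{Q}_{k,\omega}(N)$ and in-probability bounds rather than expectations of divergences). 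You correctly identified this step as the main obstacle, but the obstacle is not only the higher-order Taylor terms you mention --- it is the choice of divergence itself.
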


\subsection{Some properties of Meyer wavelets}

Recall that the Meyer mother wavelet $\psi$ is not compactly supported. Nevertheless, Meyer wavelet function satisfies the following proposition which will be useful for the construction of a lower bound of the minimax risk.

\begin{proposition}\label{bornemeyer}
There exists a universal constant $c(\psi)$ such that for any $j \in \mathbb{N}$ and for any $(\omega_k)_{0 \leq k \leq 2^j -1} \in \{0,  1\}^{2^{j}}$
$$
\sup_{x \in \mathbb{R}}  \left| \sum_{k=0}^{2^j-1} \omega_k \psi_{j,k}(x) \right| \leq c(\psi) 2^{j/2}.
$$
\end{proposition}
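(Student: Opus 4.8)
The plan is to reduce the statement to a single quantitative feature of the Meyer wavelet, namely its rapid decay: since the Fourier transform $\hat\psi$ is $C^{\infty}$ with compact support, $\psi$ belongs to the Schwartz class $\mathcal{S}(\mathbb{R})$, so there is a constant $C=C(\psi)$ with $|\psi(x)|\le C(1+|x|)^{-2}$ for all $x\in\mathbb{R}$ (this decay is part of the standard properties of the Meyer basis recalled in \cite{MR2488345,JKPR}). Because we only need a bound on $\sup_{x}$ and each $\omega_{k}\in\{0,1\}$, the coefficients will enter only through $|\omega_{k}|\le1$, so it suffices to control $\sum_{k}|\psi_{j,k}(x)|$ uniformly in $x$.

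\textbf{Unfolding the periodization.} First I would use that the periodic Meyer wavelet on $[0,1]$ is obtained by periodizing the wavelet on $\mathbb{R}$, i.e. $\psi_{j,k}(x)=\sum_{\ell\in\mathbb{Z}}2^{j/2}\psi\bigl(2^{j}(x+\ell)-k\bigr)$. Inserting this into $\sum_{k=0}^{2^{j}-1}\omega_{k}\psi_{j,k}(x)$, exchanging the (absolutely convergent) sums, and re-indexing the double sum over $\ell\in\mathbb{Z}$ and $0\le k\le 2^{j}-1$ by the single integer $m=k-2^{j}\ell$ — for which one sets $\widetilde\omega_{m}:=\omega_{m\bmod 2^{j}}\in\{0,1\}$ — gives
$$
\sum_{k=0}^{2^{j}-1}\omega_{k}\psi_{j,k}(x)=2^{j/2}\sum_{m\in\mathbb{Z}}\widetilde\omega_{m}\,\psi\bigl(2^{j}x-m\bigr),
$$
hence $\bigl|\sum_{k=0}^{2^{j}-1}\omega_{k}\psi_{j,k}(x)\bigr|\le 2^{j/2}\sum_{m\in\mathbb{Z}}\bigl|\psi(2^{j}x-m)\bigr|$ for every $x\in\mathbb{R}$.

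\textbf{A uniform bound on the periodized modulus.} It then remains to prove the elementary fact that $c(\psi):=\sup_{y\in\mathbb{R}}\sum_{m\in\mathbb{Z}}|\psi(y-m)|<\infty$. I would write $y=\lfloor y\rfloor+r$ with $r\in[0,1)$ and shift the summation index to get $\sum_{m}|\psi(y-m)|=\sum_{m\in\mathbb{Z}}|\psi(m+r)|$; the three terms with $|m|\le1$ are bounded by $3\|\psi\|_{\infty}$, while for $|m|\ge2$ one has $|m+r|\ge|m|-1\ge|m|/2$, so the tail is at most $\sum_{|m|\ge2}C(1+|m|/2)^{-2}\le 4C\sum_{|m|\ge2}|m|^{-2}$, a convergent series independent of $r$. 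This yields a bound that is uniform in $y$; taking $y=2^{j}x$ in the previous display then gives $\sup_{x\in\mathbb{R}}\bigl|\sum_{k}\omega_{k}\psi_{j,k}(x)\bigr|\le c(\psi)\,2^{j/2}$ with $c(\psi)$ depending only on $\psi$, which is the assertion.

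\textbf{Where the difficulty lies.} There is no real analytic obstacle here — the proposition is a soft consequence of the Schwartz decay of Meyer wavelets — and the only points that require care are essentially bookkeeping: carrying out the re-indexing so that exactly a factor $2^{j/2}$ survives (rather than an extra power of $2^{j}$) and so that the $2^{j}$-periodicity of $(\widetilde\omega_{m})_{m}$ is used correctly, and checking that the constant $c(\psi)$ is genuinely independent of $j$, of $x$, and of the chosen sign pattern $(\omega_{k})_{k}$.
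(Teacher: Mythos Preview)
Your argument is correct and is essentially the same as the paper's: both reduce the claim to the fact that $\sup_{y\in\mathbb{R}}\sum_{m\in\mathbb{Z}}|\psi(y-m)|<\infty$ and then invoke the scaling $\psi_{j,k}(x)=2^{j/2}\psi(2^{j}x-k)$. You simply supply more detail---you explicitly unfold the periodization and you prove the finiteness of the periodized modulus from the Schwartz decay of $\psi$---whereas the paper states these two points in one line.
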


\begin{proof}
Note that for periodic Meyer wavelets, one has that
$$
\sup_{x \in \mathbb{R}} \sum_{k \in \mathbb{Z}} |\psi(x-k)| < \infty.
$$ 
Hence the proof follows using the definition of $\psi_{j,k}(x) = 2^{j/2} \psi(2^j x - k )$.\cqfd
\end{proof}

\subsection{Definitions and notations}
%Recall that $\tau_{1},\ldots,\tau_{n}$ are iid random variables with density $g$, and that for $\lambda \in  \Lambda_{0}$ a given intensity, we denote by $N^1_{\lambda(\cdot -\tau_{1})},\ldots,N^n_{\lambda(\cdot -\tau_{n})}$  the  counting processes such that conditionally to $\tau_{1},\ldots,\tau_{n}$, $N^1_{\lambda(\cdot -\tau_{1})},\ldots,N^n_{\lambda(\cdot -\tau_{n})}$ are independent Poisson processes with intensities $\lambda(\cdot -\tau_{1}),\ldots,\lambda(\cdot -\tau_{n})$.
Recall that $\btau_{1},\ldots,\btau_{n}$ are i.i.d.\ random variables with density $g$, and that for $\lambda \in  \Lambda_{0}$ a given intensity, we denote by $N^1,\ldots,N^n$  the  counting processes such that conditionally to $\btau_{1},\ldots,\btau_{n}$, $N^1,\ldots,N^n$ are independent Poisson processes with intensities $\lambda(\cdot -\btau_{1}),\ldots,\lambda(\cdot -\btau_{n})$. %Then, $\mathbb{E}_{\tau_{i},N_{\lambda(\cdot -\tau_{i})}}$ will denote the expectation with respect to the distribution of the process $N^i_{\lambda(\cdot -\tau_{i})}$ for $i=1\ldots,n$.
Then, the notation $\mathbb{E}_{\lambda}$ will be used to denote the expectation with respect to the distribution $\PP_{\lambda}$ (tensorized law) of the multivariate counting process $N = \left( N^1, \dots , N^n \right)$. In the rest of the proof, we also assume that $p,q$ denote two integers such that $1 \leq p \leq \infty$, $1 \leq q \leq \infty$, $A$ is a positive constant, and  that $s$ is a positive real such that $ s > 2 \nu+1 $, where $\nu$ is the degree of ill-posedness defined in Assumption \ref{ass:g}.

A key step in the proof is the use of the likelihood ratio $\Lambda(H_0,H_1)$  between two measures associated to two  hypotheses $H_0$ and $H_1$ on the  intensities of the Poisson processes we consider. The following lemma, whose proof can be found in \cite{Bremaud}, is a Girsanov's like formula for Poisson processes.

\begin{lemma} [Girsanov's like formula]
\label{bremaud}
Let $\mathcal{N}_0$ (hypothesis $H_0$) and $\mathcal{N}_1$ (hypothesis $H_1$) two Poisson processes having respective intensity $\lambda_{0}(t) = \rho$ and $\lambda_{1}(t) = \rho+\mu(t) $  for all $t \in [0,1]$, where $\rho>0$ is a positive constant and $\mu \in  \Lambda_{0} $ is a positive function. Let $\PP_{\lambda_{1}}$ (resp. $\PP_{\lambda_{0}}$) be the distribution of $\mathcal{N}_1$ (resp. $\mathcal{N}_0$).  Then, the likelihood ratio between $H_0$ and $H_1$ is
\begin{equation} \label{eq:Girsa1}
\Lambda(H_0,H_1)(\mathcal{N}) := \frac{d \PP_{\lambda_{1}}}{d \PP_{\lambda_{0}}} (\mathcal{N}) =  \exp \left[ - \int_0^1 \mu(t) dt + \int_0^1 \log \left( 1+ \frac{\mu(t)}{\rho} \right) d\mathcal{N}_{t} \right],
\end{equation}
where  $\mathcal{N}$ is a Poisson process with intensity belonging to $ \Lambda_{0}$.
\end{lemma}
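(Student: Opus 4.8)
The plan is to prove \eqref{eq:Girsa1} by a direct computation of the Radon--Nikodym derivative on the space of finite point configurations of $[0,1]$, using the elementary description of a Poisson process conditionally on its number of points; the stochastic-calculus ``Girsanov'' interpretation that gives the lemma its name can then be recovered afterwards.

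First I would recall the canonical description of a Poisson process on $[0,1]$ with intensity $\lambda \in \Lambda_0$: writing $K = \mathcal{N}([0,1])$ for the total number of points and $m = \int_0^1 \lambda(t)\,dt$, the variable $K$ is Poisson distributed with parameter $m$, and conditionally on $\{K = k\}$ the $k$ points are i.i.d.\ with density $\lambda(\cdot)/m$ on $[0,1]$ (this uses $m>0$, which holds here since $\rho>0$). Decomposing the configuration space as the disjoint union $\bigsqcup_{k \geq 0}$ of its $k$-point strata and parametrising each stratum by the ordered simplex $\{0 \le x_1 < \cdots < x_k \le 1\}$, this gives an explicit expression for $\PP_\lambda$ restricted to the $k$-point stratum: with respect to Lebesgue measure on the simplex its density is $e^{-m}\prod_{i=1}^k \lambda(x_i)$, obtained as the product of the Poisson weight $e^{-m}m^k/k!$ and the conditional density $k!\prod_i \lambda(x_i)/m^k$. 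This is the Janossy-density representation of the Poisson law.

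Next I would form, on each $k$-point stratum, the ratio of the two densities obtained in this way for $\lambda_1 = \rho + \mu$ and $\lambda_0 \equiv \rho$. Using $\int_0^1 \lambda_1 - \int_0^1 \lambda_0 = \int_0^1 \mu$, this ratio equals
\[
\exp\!\Big(-\int_0^1 \mu(t)\,dt\Big)\prod_{i=1}^k \frac{\rho + \mu(x_i)}{\rho}.
\]
Since $\rho>0$ is bounded away from zero and $\mu \ge 0$, the two laws are mutually absolutely continuous on each stratum, whence $\PP_{\lambda_1} \ll \PP_{\lambda_0}$ globally, so this common expression is genuinely a version of $d\PP_{\lambda_1}/d\PP_{\lambda_0}$. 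It then remains to rewrite the product as an exponential: on the $k$-point stratum one has $\int_0^1 f(t)\,d\mathcal{N}_t = \sum_{i=1}^k f(x_i)$ for any measurable $f$, so taking $f(t) = \log(1 + \mu(t)/\rho)$ turns $\prod_{i=1}^k (1 + \mu(x_i)/\rho)$ into $\exp\big(\int_0^1 \log(1 + \mu(t)/\rho)\,d\mathcal{N}_t\big)$, which yields exactly \eqref{eq:Girsa1} (the empty-configuration case $k=0$ reduces to $e^{-\int_0^1\mu}$, consistent with $\int d\mathcal{N}=0$).

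The algebra is elementary; the point that needs care is the rigorous set-up of the configuration space and of the notion of ``density'', that is, justifying the Janossy representation and the measurability of the parametrising maps, together with the absolute-continuity statement that makes the Radon--Nikodym derivative meaningful. The cleanest way to package this, and the reason for the name of the lemma, is the martingale route: introduce the filtration $(\mathcal{F}_t)$ generated by $\mathcal{N}$ and check that $L_t = \exp(-\int_0^t \mu + \int_0^t \log(1 + \mu/\rho)\,d\mathcal{N}_s)$ is the Dol\'eans--Dade exponential solving $dL_t = L_{t^-}\,(\mu(t)/\rho)\,(d\mathcal{N}_t - \rho\,dt)$, that it is a nonnegative $\PP_{\lambda_0}$-martingale with $\E_{\lambda_0} L_1 = 1$, and that a Girsanov-type theorem for point processes identifies $L_1\,d\PP_{\lambda_0}$ with $\PP_{\lambda_1}$. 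I would present the direct density computation as the main argument and refer to \cite{Bremaud} for the martingale formulation, both reaching \eqref{eq:Girsa1}. \cqfd
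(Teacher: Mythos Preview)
Your argument is correct. The Janossy-density computation is standard and clean: conditioning on the total count, writing the joint density of the ordered points on each $k$-point stratum, taking the ratio, and then rewriting the product $\prod_i (1+\mu(x_i)/\rho)$ as $\exp\big(\int_0^1 \log(1+\mu(t)/\rho)\,d\mathcal{N}_t\big)$ gives exactly \eqref{eq:Girsa1}. The absolute-continuity step is justified since $\lambda_0\equiv\rho>0$ dominates on every stratum.

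As for comparison with the paper: the paper does not actually give a proof of this lemma. It simply states the result and refers to \cite{Bremaud} for the derivation, which is the stochastic-calculus route you sketch at the end (the Dol\'eans--Dade exponential and the Girsanov theorem for point processes). Your direct density computation is therefore more than what the paper itself provides; it is a genuinely self-contained alternative to the martingale argument in \cite{Bremaud}. The elementary approach has the advantage of avoiding filtrations and stochastic exponentials entirely, at the cost of requiring the Janossy/stratum set-up to be stated carefully; the martingale route is heavier in machinery but extends more readily beyond the finite-interval, bounded-intensity setting. Either is adequate here. Since you already cite \cite{Bremaud} for the martingale formulation, your write-up is in fact strictly more informative than the paper's treatment.
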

The above lemma means that if  $F(\mathcal{N})$ is a real-valued and bounded measurable function of the counting process  $\mathcal{N} = \mathcal{N}_1$ (hypothesis $H_1$), then
\begin{eqnarray*}
\mathbb{E}_{H_{1}} \left[ F(\mathcal{N}) \right]  = \mathbb{E}_{H_{0}} \left[F(\mathcal{N}) \Lambda(H_0,H_1)(\mathcal{N})  \right] 
\end{eqnarray*}
where $\mathbb{E}_{H_{1}}$ denotes the expectation with respect to $\PP_{\lambda_{1}}$  (hypothesis $H_1$), and $\mathbb{E}_{H_{0}}$ denotes the expectation with respect to $\PP_{\lambda_{0}}$ (hypothesis $H_0$).

Obviously, one can adapt Lemma \ref{bremaud} to the case of $n$ independent  Poisson processes $\mathcal{N}=(\mathcal{N}^1, \dots \mathcal{N}^n)$ with respective intensities $\lambda_{i}(t) = \rho+\mu_{i}(t), \; t \in [0,1], i=1,\ldots,n$ under $H_{1}$ and  $\lambda_{i}(t) = \rho , \; t \in [0,1], i=1,\ldots,n$ under $H_{0}$, where $\mu_{1},\ldots,\mu_{n}$ are  positive intensities in $  \Lambda_{0}$. In such a case, the Girsanov's like formula \eqref{eq:Girsa1} becomes
\begin{equation} \label{eq:Girsan}
\Lambda(H_0,H_1)(\mathcal{N}) = \prod_{i=1}^n \exp \left[ - \int_0^1 \mu_{i}(t) dt + \int_0^1 \log \left( 1+ \frac{\mu_{i}(t)}{\rho} \right) d\mathcal{N}^i_t \right].
\end{equation}

\subsection{Minoration of the minimax risk using the Assouad's cube technique}

Let us first describe the main idea of the proof. In Lemma \ref{lowertrick}, we  provide a first result giving a lower bound on the quadradic risk of any estimator over a specific set of test functions. These test functions are appropriate linear combinations of  Meyer wavelets whose construction follows ideas of the Assouad's cube technique to derive lower bounds for minimax risks (see e.g.\ \cite{HKPT,MR2488345}). A key step in the proof of  Lemma \ref{lowertrick} is the use of the likelihood ratio formula \eqref{eq:Girsan}. Then, we detail precisely in Lemma \ref{calcul} the asymptotic behavior of the likelihood ratio \eqref{eq:r} defined in   Lemma \ref{lowertrick}   under well-chosen hypotheses  $H_{1}$ and $H_{0}$. The result of Theorem \ref{th:borneinf} then follows from these two lemmas.

Given an integer $D \geq 1$, introduce $$ S_D(A) = \{f \in  \Lambda_{0} \cap  B^{s}_{p,q}(A)  \quad | \quad \langle f,\psi_{j,k} \rangle = 0 \ \forall j\not = D\ \forall k \in \{0 \dots 2^{j}-1\}   \}.$$
For any $\omega = (\omega_{k})_{k=0,\ldots,2^D-1} \in \lbrace 0,1 \rbrace^{2^D}$ and $\ell\in \lbrace 0,\dots,2^D-1\rbrace$, we define $\bar {\omega}^{\ell}\in \lbrace 0,1 \rbrace^{2^D}$ as $\bar{\omega}^{\ell}_k = \omega_k, \forall k \neq l$
 and $\bar{\omega}^{\ell}_\ell  = 1-\omega_{\ell}$. In what follows, we will use the likelihood ratio formula \eqref{eq:Girsan} with the intensity
 \begin{equation}
\lambda_{0}(t) = \rho(A) = \frac{A}{2}, \forall t \in [0,1], \label{eq:lambda0}
\end{equation}
 which corresponds to the hypothesis $H_{0}$ under which  all the intensities of  the observed counting processes are constant and equal to $A/2$ where $A$ is the radius of the Besov ball $B^{s}_{p,q}(A)$.
Next, for any  $\omega\in \lbrace 0,1 \rbrace^{2^D-1}$,
we denote by $\lambda_{D,\omega}$ the intensity defined as
\begin{equation}
\lambda_{D,\omega} = \rho(A) + \xi_D \sum_{k=0}^{2^D-1} w_k \psi_{D,k} + \xi_D 2^{D/2} c(\psi), \ \mathrm{with} \ \xi_D= c 2^{-D(s+1/2)},
\label{eq:fonctionstests}
\end{equation}
for some constant $0 < c \leq  A / (2+ c(\psi) )$, and  where $c(\psi)$ is the constant introduced in Proposition \ref{bornemeyer}. 
For the sake of convenience, we omit in what follows the subscript $D$ and write $\lambda_{\omega}$ instead of $\lambda_{D,\omega}$. First, remark that each function $\lambda_{\omega}$ can be written as $\lambda_{\omega} = \rho(A) + \mu_{\omega}$ where
$$
\mu_{\omega} = \xi_D \sum_{k=0}^{2^D-1} w_k \psi_{D,k} + \xi_D 2^{D/2} c(\psi),
$$
is a positive intensity belonging to $\Lambda_{0}$ by Proposition \ref{bornemeyer}. Moreover, it can be checked that the condition $c  \leq A / (2+ c(\psi) )$ implies that $\lambda_{\omega} \in B^{s}_{p,q}(A)$. Therefore, $\lambda_{\omega} \in S_D(A)$ for any $\omega \in \lbrace 0,1 \rbrace^{2^D}$. The following lemma provides a lower bound on $S_D$.

\begin{lemma}
\label{lowertrick}
Using the notations defined above,  the following inequality holds
$$\inf_{\hat \lambda_{n}} \sup_{\lambda \in S_D(A)} \mathbb{E}_{\lambda} \| \hat \lambda_{n} - \lambda \|^2 \geq 
\frac{\xi_D^2}{4}\frac{1}{2^{2^D}}   \sum_{k=0}^{2^D-1}  \sum_{\omega\in \lbrace 0,1 \rbrace^{2^D} | w_{k}=1}  \mathbb{E}_{\lambda_{\omega}} \left[ 1 \wedge \mathcal{Q}_{k,\omega}(N) \right], $$
with $N = \left( N^1, \ldots , N^n \right)$ and

  \begin{equation}\label{eq:r}
\mathcal{Q}_{k,\omega}(N) = \frac{ \int_{\R^{n}} \prod_{i=1}^n \exp\left[ -\int_0^1 \mu_{\bar{\omega}^k}(t-\alpha_i)dt + \int_0^1 \log \left( 1+ \frac{\mu_{\bar{\omega}^k}(t-\alpha_i)}{\rho(A)} \right)  dN^{i}_{t} \right] g(\alpha_{i}) d \alpha_{i} }{ \int_{\R^{n}}  \prod_{i=1}^n \exp\left[ -\int_0^1 \mu_{\omega}(t-\alpha_i)dt + \int_0^1 \log \left( 1+ \frac{\mu_{\omega}(t-\alpha_i)}{\rho(A)} \right) dN^{i}_{t} \right] g(\alpha_{i}) d \alpha_{i} }.\end{equation}
\end{lemma}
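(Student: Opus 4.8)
The plan is to run the classical Assouad cube argument, the only new ingredient being that the two–point likelihood ratio is replaced by the ratio of the \emph{shift–averaged} observation laws $\PP_{\lambda_\omega}$, which is precisely $\mathcal{Q}_{k,\omega}(N)$.

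First I would bound the supremum over $S_D(A)$ from below by the average over the cube: since every $\lambda_\omega$ lies in $S_D(A)$,
\[
\sup_{\lambda\in S_D(A)}\E_\lambda\|\hat\lambda_n-\lambda\|^2 \;\geq\; \frac{1}{2^{2^D}}\sum_{\omega\in\{0,1\}^{2^D}}\E_{\lambda_\omega}\|\hat\lambda_n-\lambda_\omega\|^2 .
\]
Because the periodized Meyer system is orthonormal and $\int_0^1\psi_{D,k}=0$, the level-$D$ wavelet coefficients of $\lambda_\omega=\rho(A)+\mu_\omega$ are $\langle\lambda_\omega,\psi_{D,k}\rangle=\xi_D\omega_k$ (the constant terms $\rho(A)$ and $\xi_D2^{D/2}c(\psi)$ drop out), so by Parseval $\|\hat\lambda_n-\lambda_\omega\|^2\geq\sum_{k=0}^{2^D-1}|\hat\beta_{D,k}-\xi_D\omega_k|^2$ with $\hat\beta_{D,k}:=\langle\hat\lambda_n,\psi_{D,k}\rangle$. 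Setting $\hat\omega_k:=\1_{\{\hat\beta_{D,k}\geq\xi_D/2\}}$, an elementary case check gives $|\hat\beta_{D,k}-\xi_D\omega_k|^2\geq\frac{\xi_D^2}{4}\1_{\{\hat\omega_k\neq\omega_k\}}$, whence
\[
\sup_{\lambda\in S_D(A)}\E_\lambda\|\hat\lambda_n-\lambda\|^2 \;\geq\; \frac{\xi_D^2}{4}\,\frac{1}{2^{2^D}}\sum_{\omega\in\{0,1\}^{2^D}}\sum_{k=0}^{2^D-1}\PP_{\lambda_\omega}(\hat\omega_k\neq\omega_k).
\]

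Next I would reorganize the double sum by pairing each $\omega$ with its flip $\bar\omega^k$ along coordinate $k$: for fixed $k$, the map $\omega\mapsto\bar\omega^k$ is a bijection from $\{\omega_k=1\}$ onto $\{\omega_k=0\}$ with $\lambda_{\bar{(\bar\omega^k)}^k}=\lambda_\omega$, so the double sum equals $\sum_{k=0}^{2^D-1}\sum_{\omega:\,\omega_k=1}\big(\PP_{\lambda_\omega}(\hat\omega_k=0)+\PP_{\lambda_{\bar\omega^k}}(\hat\omega_k=1)\big)$. It then suffices to show, for each such pair, that $\PP_{\lambda_\omega}(\hat\omega_k=0)+\PP_{\lambda_{\bar\omega^k}}(\hat\omega_k=1)\geq\E_{\lambda_\omega}[1\wedge\mathcal{Q}_{k,\omega}(N)]$. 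This is where the random shifts enter: conditioning on $\btau_1,\dots,\btau_n$, Lemma \ref{bremaud} in its $n$-sample form \eqref{eq:Girsan} expresses the conditional law of $N$ relative to the constant intensity $\rho(A)$; integrating these conditional densities against $g(\alpha_1)d\alpha_1\cdots g(\alpha_n)d\alpha_n$ shows that $\PP_{\lambda_\omega}$ has, with respect to the $\rho(A)$-law, exactly the density appearing in the denominator of \eqref{eq:r}, and similarly $\PP_{\lambda_{\bar\omega^k}}$ the numerator. Hence $d\PP_{\lambda_{\bar\omega^k}}/d\PP_{\lambda_\omega}=\mathcal{Q}_{k,\omega}(N)$, and the change of measure gives
\[
\PP_{\lambda_\omega}(\hat\omega_k=0)+\PP_{\lambda_{\bar\omega^k}}(\hat\omega_k=1) = \E_{\lambda_\omega}\big[\1_{\{\hat\omega_k=0\}}\big] + \E_{\lambda_\omega}\big[\1_{\{\hat\omega_k=1\}}\,\mathcal{Q}_{k,\omega}(N)\big] \;\geq\; \E_{\lambda_\omega}\big[1\wedge\mathcal{Q}_{k,\omega}(N)\big],
\]
the last inequality from $\1_A+\1_{A^c}L\geq(1\wedge L)(\1_A+\1_{A^c})$. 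Assembling the three displays yields the lemma.

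The main obstacle is the identification $d\PP_{\lambda_{\bar\omega^k}}/d\PP_{\lambda_\omega}=\mathcal{Q}_{k,\omega}(N)$ at the level of the shift–averaged laws: one must verify that Lemma \ref{bremaud} applies (the intensities $\lambda_\omega,\lambda_{\bar\omega^k}$ are bounded and bounded below by $\rho(A)=A/2>0$, so all densities are well defined and the denominator in \eqref{eq:r} is a.s.\ strictly positive) and that Fubini legitimately turns the product of conditional likelihood ratios into a ratio of mixtures over the $\alpha_i$; the rest is standard cube bookkeeping. The subsequent quantitative work, carried out in Lemma \ref{calcul}, is then to bound $\E_{\lambda_\omega}[1\wedge\mathcal{Q}_{k,\omega}(N)]$ from below for $\xi_D=c\,2^{-D(s+1/2)}$ and a suitable choice $D=D(n)$, which together with the exponent arithmetic delivers the rate $n^{-2s/(2s+2\nu+1)}$ of Theorem \ref{th:borneinf}.
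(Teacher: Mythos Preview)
Your proposal is correct and follows essentially the same Assouad-cube argument as the paper: Parseval plus a $\{0,1\}$-discretization of the wavelet coefficients, pairing each $\omega$ with its flip $\bar\omega^k$, and a two-point likelihood-ratio bound via the Girsanov formula with the constant intensity $\rho(A)$ as reference. The only cosmetic difference is that you package the shift-averaging as the identification $d\PP_{\lambda_{\bar\omega^k}}/d\PP_{\lambda_\omega}=\mathcal{Q}_{k,\omega}(N)$ (the ratio of mixture densities relative to the $\rho(A)$-law), whereas the paper keeps the intermediate $H_0$-expectation explicit and performs the Fubini swap by observing that under $H_0$ the law of $N$ is independent of the shifts; both routes rely on exactly the same observation and yield the same bound.
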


\begin{proof}
Let $\hat \lambda_{n} = \hat \lambda_{n}(N ) \in L^{2}([0,1])$ denote any estimator of $\lambda \in S_D(A)$ (a measurable function of the process $N$). Note that, to simplify the notations, we will drop in the proof the dependency of $\hat \lambda_{n}(N )$ on $N$ and $n$, and we write $\hat \lambda = \hat \lambda_{n}(N )$. %Note that to simplify the notations we sometimes drop in the proof the dependency of $N = (N^{1},\ldots,N^{n})$ on the intensity $\lambda$ and the random shifts $\tau = (\tau_{1},\ldots,\tau_{n})$. 
Then,  define
$$
R(\hat \lambda ) =  \sup_{\lambda \in S_D(A)} \mathbb{E}_{\lambda} \| \hat \lambda - \lambda \|^2.
$$
Since $\lambda_{\omega} \in S_D(A)$ for any $\omega \in \lbrace 0,1 \rbrace^{2^D}$, it follows from Parseval's relation that
\begin{equation*}
R(\hat \lambda ) \geq   \sup_{\omega \in \lbrace 0,1 \rbrace^{2^D} } \mathbb{E}_{\lambda_{\omega}} \| \hat \lambda - \lambda_{\omega} \|^2
\geq  \sup_{\omega \in \lbrace 0,1 \rbrace^{2^D} } \mathbb{E}_{\lambda_{\omega}} \sum_{k=0}^{2^D-1} | \beta_{D,k}(\hat \lambda) - \omega_k \xi_D |^2,
\end{equation*}
where we have used the notation $\beta_{D,k}(\hat\lambda) = \langle \hat\lambda, \psi_{D,k} \rangle$. For all $k\in \lbrace 0,\dots,2^D-1 \rbrace$ define
$$ \hat \omega_k = \hat \omega_k(N) := \arg \min_{v \in \lbrace 0,1 \rbrace} |\beta_{D,k}(\hat\lambda(N)) -v \xi_D |.$$
Then, the triangular inequality and the definition of $\hat \omega_k$ imply that
$$ \xi_D | \hat\omega_k  - \omega_k  | \leq  |\hat \omega_k \xi_D - \beta_{D,k}(\hat\lambda) | + | \beta_{D,k}(\hat\lambda) - \omega_k \xi_D | \leq 2 | \beta_{D,k}(\hat\lambda) - \omega_k \xi_D  |.   $$
Thus, 
\begin{eqnarray}
R(\hat \lambda )  & \geq & \frac{\xi_D^2}{4}  \sup_{\omega\in \lbrace 0,1 \rbrace^{2^D}} \mathbb{E}_{\lambda_{\omega}} \sum_{k=0}^{2^D-1} |\hat\omega_k(N ) -\omega_k|^2, \nonumber \\
& \geq & \frac{\xi_D^2}{4}\frac{1}{2^{2^D}} \sum_{\omega\in \lbrace 0,1 \rbrace^{2^D}} \sum_{k=0}^{2^D-1} \mathbb{E}_{\lambda_{\omega}} | \hat\omega_k(N  ) - \omega_k |^2. \label{eq:lowerboundR1}
\end{eqnarray}
Let $k\in \lbrace 0,\dots,2^D-1 \rbrace$ and $\omega\in \lbrace 0,1 \rbrace^{2^D}$ be fixed parameters. Conditionally to the vector $\btau=(\btau_1, \dots \btau_n) \in \R^{n}$, we define the two hypothesis $H_0$ and $H_{\omega}^{\btau}$ as
\begin{itemize}
\item[$H_0$:] $N^1,\ldots,N^n$ are independent Poisson processes with intensities $\left( \lambda_{0}(\cdot -\btau_{1}),\ldots,\lambda_{0}(\cdot -\btau_{n}) \right) = \left( \lambda_{0}(\cdot),\ldots,\lambda_{0}(\cdot) \right)$, where $\lambda_{0}$ is the constant intensity defined by \eqref{eq:lambda0},
\item[$H_{\omega}^{\btau}$:] $N^1,\ldots,N^n$ are independent Poisson processes with intensities $(\lambda_{\omega}(\cdot -\btau_1), \dots , \lambda_{\omega}(\cdot -\btau_n))$.
\end{itemize}
In what follows, we use the notation  $\E_{H_0}$ (resp.\  $\E_{H_{\omega}^{\btau}}$) to denote the expectation under the hypothesis $H_0$ (resp.\ $H_{\omega}^{\btau}$) conditionally to $\btau = (\btau_1, \dots \btau_n)$.  The Girsanov's like formula \eqref{eq:Girsan} yields
\begin{eqnarray*}
\mathbb{E}_{\lambda_{\omega}} | \hat\omega_k(N) - \omega_k |^2 &=& \int_{\R^{n}} \E_{H_1^{\tau}}   | \hat\omega_k(N) - \omega_k |^2 g(\tau_{1}) \ldots g(\tau_{n}) d \tau \\
& =&  \int_{\R^{n}}  \E_{H_0}  \left[ |\hat\omega_k(N) - \omega_k |^2 \Lambda(H_0,H_{\omega}^{\tau})(N)  \right] g(\tau_{1}) \ldots g(\tau_{n}) d \tau,
\end{eqnarray*}
with $d \tau = d \tau_1, \ldots, d \tau_n $ and
$$
\Lambda(H_0,H_{\omega}^{\tau})(N) = \prod_{i=1}^n \exp \left[ - \int_0^1 \mu_{\omega}(t- \tau_{i}) dt + \int_0^1 \log \left( 1+ \frac{\mu_{\omega}(t- \tau_{i})}{\rho(A)} \right) dN^i_{t} \right],
$$
for $N = (N^{1},\ldots,N^{n})$. Now, remark that under the hypothesis $H_0$, the law of the random variable $\hat{\omega}_k(N)$ does not depend on the random shifts $\btau = (\btau_{1},\ldots,\btau_{n})$ since $\lambda_{0}$ is a constant intensity. Thus, we obtain the following equality
\begin{equation} \label{eq:key}
\mathbb{E}_{\lambda_{\omega}} | \hat\omega_k(N) - \omega_k |^2 
= \E_{H_0}   \left[ |\hat\omega_k(N) - \omega_k |^2  \int_{\R^{n}} \Lambda(H_0,H_{\omega}^{\tau})(N) g(\tau_{1}) \ldots g(\tau_{n}) d \tau \right].
\end{equation}
 Using equality \eqref{eq:key}, we may re-write the lower bound \eqref{eq:lowerboundR1} on $R(\hat \lambda )$ as
 \begin{eqnarray*}
 R(\hat \lambda ) & \geq & \frac{\xi_D^2}{4}\frac{1}{2^{2^D}} \sum_{\omega\in \lbrace 0,1 \rbrace^{2^D}} \sum_{k=0}^{2^D-1}  \E_{H_0}   \left[ |\hat\omega_k(N) - \omega_k |^2  \int_{\R^{n}} \Lambda(H_0,H_{\omega}^{\tau})(N) g(\tau_{1}) \ldots g(\tau_{n}) d \tau \right] \\
 & = & \frac{\xi_D^2}{4}\frac{1}{2^{2^D}}  \sum_{k=0}^{2^D-1}  \sum_{\omega\in \lbrace 0,1 \rbrace^{2^D} | w_{k}=1}  \left( \E_{H_0}   \left[ |\hat\omega_k(N) - \omega_k |^2  \int_{\R^{n}} \Lambda(H_0,H_{\omega}^{\tau})(N) g(\tau_{1}) \ldots g(\tau_{n}) d \tau \right]  \right.+ \\
 & & \left.\E_{H_0}   \left[ |\hat\omega_k(N) - \bar{\omega}_k^{k} |^2  \int_{\R^{n}} \Lambda(H_0,H_{\bar{\omega}^{k}}^{\tau})(N) g(\tau_{1}) \ldots g(\tau_{n}) d \tau \right] \right).
 % & \geq & \frac{\xi_D^2}{8}\frac{1}{2^{2^D}}   \sum_{\omega\in \lbrace 0,1 \rbrace^{2^D}}\sum_{k=0}^{2^D-1} \left\{ \mathbb{E}_{N^n_{\rho}} \left[ |\hat\omega_k - \omega_k |^2 \mathbb{E}_{\tau} \Lambda(H_0,H_{\omega}^{\tau}) \right]+ \mathbb{E}_{N^n_{\rho}} \left[ |\hat\omega_k - \bar{\omega}^k_k |^2 \mathbb{E}_{\tau} \Lambda(H_0,H_{\bar{\omega}^{k}}^{\tau}) \right]\right\}.
  \end{eqnarray*}
Using the inequality $|1-v|^2 z + |v|^2 z' \geq  z \wedge z' $ that holds for all $v \in \{0,1\}$ and all reals $z,z'>0$, we deduce that %(by taking $z = \int_{\R^{n}} \Lambda(H_0,H_{\omega}^{\tau})(N) g(\tau_{1}) \ldots g(\tau_{n}) d \tau$ and $z' =   \int_{\R^{n}} \Lambda(H_0,H_{\bar{\omega}^{k}}^{\tau})(N) g(\tau_{1}) \ldots g(\tau_{n}) d \tau$)
  \begin{eqnarray*}
 R(\hat \lambda ) & \geq &  \frac{\xi_D^2}{4}\frac{1}{2^{2^D}}   \sum_{k=0}^{2^D-1}  \sum_{\omega\in \lbrace 0,1 \rbrace^{2^D} | w_{k}=1} \E_{H_0} \left\{
\int_{\R^{n}} \Lambda(H_0,H_{\omega}^{\tau})(N) g(\tau_{1}) \ldots g(\tau_{n}) d \tau  \wedge \right., \\
& & \left. 
\int_{\R^{n}} \Lambda(H_0,H_{\bar{\omega}^{k}}^{\tau})(N) g(\tau_{1}) \ldots g(\tau_{n}) d \tau \right\} \\ 
& \geq  & \frac{\xi_D^2}{4}\frac{1}{2^{2^D}}    \sum_{k=0}^{2^D-1}  \sum_{\omega\in \lbrace 0,1 \rbrace^{2^D} | w_{k}=1} \E_{H_0} \int_{\R^{n}} \Lambda(H_0,H_{\omega}^{\tau})(N) g(\tau_{1}) \ldots g(\tau_{n}) d \tau  \left( 1 \wedge \right. \\ 
& & \left. \frac{\int_{\R^{n}} \Lambda(H_0,H_{\bar{\omega}^{k}}^{\alpha})(N) g(\alpha_{1}) \ldots g(\alpha_{n}) d \alpha}{\int_{\R^{n}} \Lambda(H_0,H_{\omega}^{\alpha})(N) g(\alpha_{1}) \ldots g(\alpha_{n}) d \alpha} \right), \\
& \geq  & \frac{\xi_D^2}{4}\frac{1}{2^{2^D}}     \sum_{k=0}^{2^D-1}  \sum_{\omega\in \lbrace 0,1 \rbrace^{2^D} | w_{k}=1}  \int_{\R^{n}}  \E_{H_0} \left[ \Lambda(H_0,H_{\omega}^{\tau})(N)  \left( 1 \wedge \mathcal{Q}_{k,\omega}(N) \right) \right]  g(\tau_{1}) \ldots g(\tau_{n}) d \tau, 
 \end{eqnarray*}
 where
 $$
 \mathcal{Q}_{k,\omega}(N) = \frac{\int_{\R^{n}} \Lambda(H_0,H_{\bar{\omega}^{k}}^{\alpha})(N) g(\alpha_{1}) \ldots g(\alpha_{n}) d \alpha}{\int_{\R^{n}} \Lambda(H_0,H_{\omega}^{\alpha})(N) g(\alpha_{1}) \ldots g(\alpha_{n}) d \alpha},
 $$
 and $d \alpha = d \alpha_{1} \ldots d \alpha_{n} $. Then, using again the  Girsanov's like formula   \eqref{eq:Girsan}, we obtain  the lower bound
$$
 R(\hat \lambda ) \geq  \frac{\xi_D^2}{4}\frac{1}{2^{2^D}}    \sum_{k=0}^{2^D-1}  \sum_{\omega\in \lbrace 0,1 \rbrace^{2^D} | w_{k}=1} \mathbb{E}_{ \lambda_{\omega} } \left[ 1 \wedge  \mathcal{Q}_{k,\omega}\right],
$$
that is independent of $\hat \lambda$ which ends the proof of the lemma. \hfill $\square$ \\

\end{proof}

\noindent We detail in the next paragraph how to use Lemma \ref{lowertrick}  with a suitable value for the parameter $D$ to obtain the desired lower bound on the minimax risk.

\subsection{Quantitative settings}

In the rest of the proof, we will suppose that $D = D_{n}$ satisfies the asymptotic equivalence
\begin{equation} \label{eq:Dn}
2^{D_{n}} \sim n^{\frac{1}{2s+2\nu+1}} \mbox{ as } n \to + \infty.
\end{equation}
To simplify the notations we will  drop the subscript $n$, and we write $D = D_{n}$. For two sequences of reals $(a_{n})_{n \geq 1}$ and $(b_{n})_{n \geq 1}$ we use the notation $a_{n} \asymp b_{n}$ if there exists two positive constants $C,C' > 0$ such that $C \leq \frac{a_{n}}{b_{n}} \leq C'$ for all sufficiently large $n$. 
% Remark that such a choice implies that each $\lambda_{\omega}$ built from \ref{eq:fonctionstests} belong to a Besov ball. Indeed we obtain
%$\xi_D \sim n^{-\frac{s+1/2}{2s+2\nu+1}}$ and one can check that for any choice of $\omega$ one can find a sufficiently large $A$ such that
%$$2^{2Ds} \|\lambda_{\omega}\|_2^2  \sim 2^{2Ds} 2^D \xi_D^2 \sim  (2^D)^{2s+1} n^{-\frac{2s+1}{2s+2\nu+1}} \leq A.
%$$
%Hence, each $\lambda_{\omega}$ belongs to $B^s_{p,q}(A)$.
Then, define $m_{D_{n}} = 2^{D_{n}/2} \xi_{D_{n}}$. Since $\xi_{D_{n}}= c 2^{-D_{n}(s+1/2)}$, it follows that  $$m_{D_{n}}   \asymp n^{-s/(2s+2\nu+1)} \to 0$$ as $n \to \infty$. Remark also that the condition $s>2 \nu+1$ implies that $$n m_{D_{n}}^3    \asymp n^{-(s-2\nu -1)/(2s+2\nu+1)} \to 0$$ as $n \to \infty$.
% $2^{D}  m_{D}  \asymp n^{-(s-1)/(2s+2\nu+1)} \to 0$ 
\subsection{Lower bound of the ``likelihood ratio" $ \mathcal{Q}_{k,\omega}$}

The above quantitative settings combined with Lemma \ref{lowertrick} will allow us to obtain a lower bound of the minimax risk. For this purpose, let $0 < \delta < 1$, and remark that Lemma  \ref{lowertrick} implies that
\begin{equation} \label{eq:boundS}
\inf_{\hat \lambda_{n}} \sup_{\lambda \in S_D(A)} \mathbb{E}_{\lambda} \| \hat \lambda - \lambda \|^2  \geq    \frac{\delta \xi_D^2}{4}\frac{1}{2^{2^D}}    \sum_{k=0}^{2^D-1}  \sum_{\omega\in \lbrace 0,1 \rbrace^{2^D} | w_{k}=1}  \PP_{\lambda_{\omega}} \left( \mathcal{Q}_{k,\omega}(N) \geq \delta \right).
\end{equation}
The remainder of the proof is thus devoted to the construction of a lower bound in probability for the random variable $\mathcal{Q}_{k,\omega}(N) := \frac{I_1}{I_2}$ where
$$
I_1 = I_1(N) =  \int_{\R^{n}} \prod_{i=1}^n \exp\left[ -\int_0^1 \mu_{\bar{\omega}^k}(t-\alpha_i)dt + \int_0^1 \log \left( 1+ \mu_{\bar{\omega}^k}(t-\alpha_i) \right)  dN^{i}_{t} \right]g(\alpha_{i}) d  \alpha_{i}
$$
and
$$
I_2 = I_2(N) =  \int_{\R^{n}}  \prod_{i=1}^n \exp\left[ -\int_0^1 \mu_{\omega}(t-\alpha_i)dt + \int_0^1 \log \left( 1+ \mu_{\omega}(t-\alpha_i) \right) dN^{i}_{t} \right] g(\alpha_{i}) d \alpha_{i},
$$
where to simplify the presentation of the proof we have taken $\rho(A) = 1$ i.e.\ $A=2$. Then, the following lemma holds (which is also valid for  $\rho(A) \neq 1$).

\begin{lemma}
\label{calcul} There exists  $0 < \delta < 1$ and a constant $p_0(\delta) > 0$ such that for any $k \in \{0 \dots 2^{D_{n}}-1\}$, any $\omega\in \lbrace 0,1 \rbrace^{2^{D_{n}}}$ and all sufficiently large $n$ 
$$
\PP_{ \lambda_{\omega} } \left( \mathcal{Q}_{k,\omega}(N) \geq \delta \right) \geq p_0(\delta)>0.
$$

\end{lemma}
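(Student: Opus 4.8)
The plan is to show that the log-likelihood ratio $\log \mathcal{Q}_{k,\omega}(N)$ is bounded below by a quantity that remains of order one with positive probability. Writing $\mathcal{Q}_{k,\omega}(N) = I_1(N)/I_2(N)$, I would first rewrite both $I_1$ and $I_2$ as expectations over the shift variables, i.e.\ $I_m(N) = \int_{\R^n} \prod_{i=1}^n \Phi_{m}(\alpha_i, N^i)\, g(\alpha_i)\,d\alpha_i$, where $\Phi_m$ is the single-process likelihood factor. Because $\mu_{\bar\omega^k}$ and $\mu_{\omega}$ differ only in the single wavelet coefficient indexed by $k$, namely $\mu_{\bar\omega^k} - \mu_{\omega} = \pm\,\xi_D \psi_{D,k}$, the ratio of integrands differs from $1$ only through the terms involving $\psi_{D,k}(t-\alpha_i)$. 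The key structural fact is that $\xi_D = c\,2^{-D(s+1/2)} \to 0$ and, under the quantitative setting \eqref{eq:Dn}, $m_{D_n} = 2^{D_n/2}\xi_{D_n} \asymp n^{-s/(2s+2\nu+1)} \to 0$ while $n m_{D_n}^3 \to 0$ (using $s > 2\nu+1$). So the perturbation is small in a way that is calibrated precisely against $n$.

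The main computation is a second-order Taylor expansion (in the small parameter $\xi_D$) of $\log I_1 - \log I_2$. Expanding $\log(1 + \mu_{\bar\omega^k}(t-\alpha_i)) - \log(1+\mu_\omega(t-\alpha_i))$ and $-\int_0^1(\mu_{\bar\omega^k}-\mu_\omega)(t-\alpha_i)\,dt$, the first-order term contributes a centered stochastic integral against $d\tilde N^i_t$ whose variance, summed over $i$ and after integrating over the shifts, is of order $n\, \xi_D^2 \|\psi_{D,k}\|_2^2 \asymp n\,\xi_D^2 = m_{D_n}^2\cdot 2^{D_n}\,/\,2^{D_n}\cdot n\ldots$ — more precisely one checks this is $O(n \xi_D^2) = O(m_{D_n}^2 \cdot (n/2^{D_n})) = O(1)$ by \eqref{eq:Dn}; the deterministic drift terms are of order $n\,\xi_D^2$ and the higher-order (cubic and beyond) remainder is controlled by $n\,\xi_D^3 2^{3D_n/2} = n m_{D_n}^3 \to 0$. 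One must be careful that the ratio $I_1/I_2$ mixes the $\alpha_i$-integrals, so the clean way is to introduce the tilted probability measure on the $\alpha_i$ proportional to $\Phi_\omega(\alpha_i,N^i)g(\alpha_i)$ and express $\mathcal{Q}_{k,\omega}$ as an expectation under that measure of $\exp(\text{first order} + \text{second order} + \text{remainder})$; then a Jensen-type bound, or direct $L^2$ control of the exponent, gives that $\mathcal{Q}_{k,\omega}$ is bounded below by a fixed constant $\delta \in (0,1)$ on an event of probability bounded below by some $p_0(\delta) > 0$. I would invoke a Chebyshev/Paley–Zygmund argument: since $\E_{\lambda_\omega}[\mathcal{Q}_{k,\omega}]$ and $\E_{\lambda_\omega}[\mathcal{Q}_{k,\omega}^{-1}]$ (or a suitable second moment) are both $O(1)$ uniformly in $k,\omega$ by the expansion, the random variable cannot be small with probability close to $1$.

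The hard part will be handling the mixing of the $n$ shift-integrals in the numerator and denominator rigorously — i.e.\ making the "tilted measure" argument precise and showing the remainder terms are negligible \emph{uniformly} over $k$ and $\omega$, using only Assumption~\ref{ass:g}, Assumption~\ref{ass:gdecay}, and the bound $\sup_x \sum_{k\in\Z}|\psi(x-k)| < \infty$ from the proof of Proposition~\ref{bornemeyer}. In particular one needs uniform bounds on moments of $\int_0^1 \psi_{D,k}(t-\alpha_i)\,d\tilde N^i_t$ conditionally on $\alpha_i$, which follow from the Poisson moment formulas (Proposition 6 in \cite{Reynaud}) together with $\|\psi_{D,k}\|_2 = O(1)$ and $\|\psi_{D,k}\|_\infty = O(2^{D/2})$. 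Once Lemma~\ref{calcul} is established, combining it with \eqref{eq:boundS} and $\xi_D^2 \cdot 2^{D_n} \asymp n^{-2s/(2s+2\nu+1)}$ yields the claimed lower bound in Theorem~\ref{th:borneinf}.
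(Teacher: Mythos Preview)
Your overall plan (second-order expansion of $\log I_1 - \log I_2$, control the drift and stochastic terms, show remainders are $o_p(1)$ since $n m_{D_n}^3\to 0$) matches the paper's strategy. However, there is a genuine gap in the order computation for the leading terms, and it is exactly the point where the degree of ill-posedness $\nu$ enters.

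You write that the first-order term has variance of order $n\,\xi_D^2\|\psi_{D,k}\|_2^2 \asymp n\,\xi_D^2$ and then claim this is $O(1)$. It is not: with $2^{D_n}\asymp n^{1/(2s+2\nu+1)}$ and $\xi_D\asymp 2^{-D(s+1/2)}$ one gets $n\,\xi_D^2 \asymp n^{2\nu/(2s+2\nu+1)}\to\infty$. What you are missing is that the integration over the shifts $\alpha_i$ (with weight $g$) replaces $\psi_{D,k}$ by $g\star\psi_{D,k}$ in both the deterministic drift and the stochastic fluctuation; by Parseval and Assumption~\ref{ass:g} one has
\[
\|g\star\psi_{D,k}\|_2^2=\sum_{\ell\in\Omega_D}|c_\ell(\psi_{D,k})|^2|\gamma_\ell|^2\asymp 2^{-2D\nu},
\]
and it is this extra factor $2^{-2D\nu}$ that makes the leading quantity $n\,\xi_D^2\,\|g\star\psi_{D,k}\|_2^2\asymp n\,\xi_D^2\,2^{-2D\nu}\asymp 1$. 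In the paper, the drift term is computed exactly as $-\tfrac{n}{2}\|g\star(\mu_\omega-\mu_{\bar\omega^k})\|^2=-\tfrac{n}{2}\xi_D^2\|g\star\psi_{D,k}\|^2\geq -c_0$, and the stochastic term is $\xi_D\sum_{i=1}^n\int_0^1 g\star\psi_{D,k}(t)\,d\tilde N_t^i$, controlled via a Bennett-type inequality with the \emph{same} $\|g\star\psi_{D,k}\|^2$ scaling. Without this convolution factor, neither the drift nor the variance is bounded and the argument collapses.

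A secondary remark: your Paley--Zygmund/tilted-measure idea is plausible in spirit, but the paper instead performs a fully explicit expansion of $\ln J_1-\ln J_2$ into five groups of terms (one deterministic drift $\asymp -1$, one centered Poisson integral against $g\star\psi_{D,k}$ of order $O_p(1)$, and three further groups shown to be $o_p(1)$ via moment computations and Chebyshev). Either route could work, but both require the $\|g\star\psi_{D,k}\|^2\asymp 2^{-2D\nu}$ identity as the key input; that is the step you need to insert.
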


\begin{proof}
For a function $\lambda \in L^{2}([0,1])$, we denote by $\| \lambda \| = \left( \int_{0}^{1} |\lambda(t)|^{2} dt \right)^{1/2}$ its $L_2$ norm and by  $\| \lambda \|_{\infty} = \sup_{t \in [0,1]} \left\{ |\lambda(t)|\right\}$ its supremum norm. In the proof, we repeatedly use the following inequalities that hold for any $\omega \in  \lbrace 0,1 \rbrace^{2^{D_{n}}}$
\begin{eqnarray}
 \|\mu_{ \omega }\| \leq & \|\mu_{ \omega }\|_{\infty} & \leq 2 c(\psi) m_{D_{n}}   \to 0, \label{eq:ineqmu}  \\ %& \quad &  \|\mu_{ \omega }\|^2 \leq (1 + c^2(\psi)) m_{D_{n}}^2  \to 0, \label{eq:ineqmu} \\
\|\lambda_{ \omega }\| \leq & \|\lambda_{ \omega }\|_{\infty}  & \leq \rho(A) + 2 c(\psi) m_{D_{n}} \to \rho(A)=1/2,  \nonumber %  \to \rho(A)=1/2, & \quad &  \|\lambda_{ \omega }\|^2 \leq  \left(\rho(A) +  c(\psi) m_{D_{n}} \right)^2+m_{D_{n}}^2  \to \rho(A)^2=1/4, \nonumber
\end{eqnarray}
as $n \to + \infty$.
Since for any $k$, one has $\int_{0}^1 \psi_{D,k}(t) dt = 0$, it follows that for
any $\omega$ and  $\alpha$,  $\int_0^1 \mu_{\omega}(t-\alpha)dt =   c(\psi) \xi_{D_{n}} 2^{D_{n}/2} =  c(\psi) m_{D_{n}}.$
Therefore, 
$$ I_1 = \int_{\R^{n}} g(\alpha_1)\dots g(\alpha_n)e^{-c(\psi) n m_{D_{n}} } \prod_{i=1}^n \exp\left[ \int_0^1 \log \left( 1+ \mu_{\bar{\omega}^{k}}(t-\alpha_i) \right) d N_{t}^{i} \right]d\alpha,$$
and
$$ I_2 = \int_{\R^{n}} g(\alpha_1)\dots g(\alpha_n)e^{-c(\psi) n m_{D_{n}} } \prod_{i=1}^n \exp\left[ \int_0^1 \log \left( 1+ \mu_{ \omega }(t-\alpha_i) \right) d N_{t}^{i} \right]d\alpha.$$
Let $z > 0$ be a positive real, and consider the following second order expansion of the logarithm
\begin{equation} \label{eq:log}
\log(1 + z) = z - \frac{z^2}{2}  +  \frac{z^3}{3} u^{-3} \mbox{ for some } 1 \leq u \leq 1 + z.
\end{equation}
Applying \eqref{eq:log}   implies that
\begin{equation} \label{eq:upboundlog}
\int_0^1 \log \left( 1+ \mu_{\bar{\omega}^{k}}(t-\alpha_i) \right) d N_{t}^{i}  \leq  \int_0^1  \left\{ \mu_{\bar{\omega}^{k}}(t-\alpha_i) - \frac{\mu^2_{\bar{\omega}^{k}}(t-\alpha_i)}{2}   \right\} d N_{t}^{i} + \int_0^1 \mu^3_{\bar{\omega}^{k}}(t-\alpha_i) d N_{t}^{i}, 
\end{equation}
and that
\begin{equation} \label{eq:lowboundlog}
\int_0^1 \log \left( 1+ \mu_{\omega}(t-\alpha_i) \right) d N_{t}^{i}  \geq  \int_0^1 \left\{ \mu_{\omega}(t-\alpha_i) - \frac{\mu^2_{\omega}(t-\alpha_i)}{2}   \right\} d N_{t}^{i}.
\end{equation}
Then, remark that inequalities \eqref{eq:ineqmu} imply that
\begin{eqnarray*}
\E_{\lambda_{ \omega }} \int_0^1 \mu^3_{\bar{\omega}^{k}}(t-\alpha_i) d N_{t}^{i} & = & \int_0^1 \mu^3_{\bar{\omega}^{k}}(t-\alpha_i) \int_{\R} \lambda_{\omega}(t-\tau_{i}) g(\tau_{i}) d \tau_{i}dt \\
& \leq  & \| \mu_{\bar{\omega}^{k}} \|_{\infty} \| \mu_{\bar{\omega}^{k}} \|^2  \| \lambda_{\omega}  \|_{\infty} = \mathcal{O}\left( m_{D_{n}}^3 \right)
\end{eqnarray*}
Therefore, by Markov's inequality it follows that $ \int_0^1 \mu^3_{\bar{\omega}^{k}}(t-\alpha_i) d N_{t}^{i} =  \mathcal{O}_{p}\left( m_{D_{n}}^3 \right)$ as $n \to +\infty$.
Hence, using inequality \eqref{eq:upboundlog}, one obtains that
\begin{eqnarray*}
I_2 & \leq &  e^{-c(\psi) n m_{D_{n}} + \mathcal{O}_{p}\left( n m_{D_{n}}^3 \right)}  \int_{\R^{n}} g(\alpha_1)\dots g(\alpha_n) \prod_{i=1}^n \exp\left[ \int_0^1 \left\{ \mu_{\omega}(t-\alpha_i) - \frac{\mu_{\omega}^2(t-\alpha_i)}{2}   \right\} dN_t^{i} \right ] d\alpha.
% I_1 & \leq & e^{-c(\psi) n m_{D_{n}} + \mathcal{O}_{p}\left( n m_{D_{n}}^3 \right)} \int_{\R^{n}} g(\alpha_1)\dots g(\alpha_n) \prod_{i=1}^n \exp\left[ \int_0^1 \left\{ \mu_{\bar{\omega}^{k}}(t-\alpha_i) - \frac{\mu_{\bar{\omega}^{k}}^2(t-\alpha_i)}{2}  \right\} dN_t^{i} \right ] d\alpha,
\end{eqnarray*}
and by inequality \eqref{eq:lowboundlog} it follows that
\begin{eqnarray*}
I_1 & \geq & e^{-c(\psi) n m_{D_{n}} } \int_{\R^{n}} g(\alpha_1)\dots g(\alpha_n) \prod_{i=1}^n \exp\left[ \int_0^1 \left\{ \mu_{\bar{\omega}^{k}}(t-\alpha_i) - \frac{\mu_{\bar{\omega}^{k}}^2(t-\alpha_i)}{2}  \right\} dN_t^{i} \right ] d\alpha,
% I_2 & \geq & e^{-c(\psi) n m_{D_{n}} } \int_{\R^{n}} g(\alpha_1)\dots g(\alpha_n) \prod_{i=1}^n \exp\left[ \int_0^1 \left\{ \mu_{\omega}(t-\alpha_i) - \frac{\mu_{\omega}^2(t-\alpha_i)}{2}   \right\} dN_t^{i} \right ] d\alpha.
\end{eqnarray*}
Combining the above inequalities and the Fubini's relation we obtain that
\begin{eqnarray}
\mathcal{Q}_{k,\omega}(N) & \geq  & e^{\mathcal{O}_p(n m_{D_{n}}^3)} \frac{ \prod_{i=1}^n \int_{\R} g(\alpha_i) \exp\left[ \int_0^1 \left\{\mu_{\bar{\omega}^{k}}(t-\alpha_i) - \frac{\mu_{\bar{\omega}^{k}}^2(t-\alpha_i)}{2} \right\} dN^i_t  \right] d\alpha_i} {\prod_{i=1}^n \int_{\R} g(\alpha_i) \exp\left[ \int_0^1  \left\{ \mu_{\omega}(t-\alpha_i)  - \frac{\mu_{\omega}^2(t-\alpha_i)}{2} \right\} d N^i_{t} \right] d\alpha_i}, \nonumber \\
& := & e^{\mathcal{O}_p(n m_{D_{n}}^3)} \frac{J_1}{J_2}.
\label{eq:I1I2} 
\end{eqnarray}
Let $z \in \R$  and consider the following second order expansion of the exponential
\begin{equation} \label{eq:exp}
\exp(z) = 1 +  z +  \frac{z^2}{2}  +  \frac{z^3}{6} \exp(u) \mbox{ for some } - |z| \leq u \leq  |z|.
\end{equation}
Let us now use  \eqref{eq:exp} with $z = \int_0^1 \left\{ \mu_{\bar{\omega}^{k}}(t-\alpha_i) -\frac{1}{2} \mu_{\bar{\omega}^{k}}^2(t-\alpha_i) \right\} dN^i_{t}$. By inequalities \eqref{eq:ineqmu}, one has that
\begin{eqnarray*}
\E_{\lambda_{ \omega }} | z | & \leq & \int_0^1 \left( \mu_{\bar{\omega}^{k}}(t-\alpha_i)  + \frac{1}{2} \mu_{\bar{\omega}^{k}}^2(t-\alpha_i) \right) \int_{\R} \lambda_{\omega}(t-\tau_{i}) g(\tau_{i}) d \tau_{i}dt, \\
& \leq & \|  \lambda_{\omega}\|_{\infty} \left( \| \mu_{\bar{\omega}^{k}} \| + \frac{1}{2} \| \mu_{\bar{\omega}^{k}} \|^2 \right) = \mathcal{O} \left( m_{D_{n}} \right).
\end{eqnarray*}
Therefore, $| z | =  \mathcal{O}_{p} \left( m_{D_{n}} \right) $ by Markov's inequality. 
Since $m_{D_{n}} \to 0$,  we obtain by using \eqref{eq:exp} that for each $i\in \lbrace 1,\dots, n \rbrace$, 
\begin{eqnarray*}
\lefteqn{\exp\left[ \int_0^1 \left\{ \mu_{\bar{\omega}^{k}}(t-\alpha_i) -\frac{1}{2} \mu_{\bar{\omega}^{k}}^2(t-\alpha_i) \right\} dN^i_{t} \right] } \\
& = & 1 + \int_0^1 \mu_{\bar{\omega}^{k}}(t-\alpha_i) dN^i_{t} - \frac{1}{2}\int_0^1 \mu_{\bar{\omega}^{k}}^2(t-\alpha_i) dN^i_{t}\\
&  &  + \frac{1}{2}\left( \int_0^1 \mu_{\bar{\omega}^{k}}(t-\alpha_i) dN^i_{t} - \frac{1}{2}\int_0^1 \mu_{\bar{\omega}^{k}}^2(t-\alpha_i) dN^i_{t} \right)^2 + \mathcal{O}_p(m_{D_{n}}^3),\\
& = &  1 + \int_0^1 \mu_{\bar{\omega}^{k}}(t-\alpha_i) dN^i_{t} - \frac{1}{2}\int_0^1 \mu_{\bar{\omega}^{k}}^2(t-\alpha_i) dN^i_{t}
+ \frac{1}{2}\left( \int_0^1 \mu_{\bar{\omega}^{k}}(t-\alpha_i) dN^i_{t} \right)^2+ \mathcal{O}_p(m_{D_{n}}^3),
\end{eqnarray*}
where we have used the fact that $$\left( \int_0^1 \mu_{\bar{\omega}^{k}}(t-\alpha_i) dN^i_{t} - \frac{1}{2}\int_0^1 \mu_{\bar{\omega}^{k}}^2(t-\alpha_i) dN^i_{t} \right)^2 =  \left( \int_0^1 \mu_{\bar{\omega}^{k}}(t-\alpha_i) dN^i_{t} \right)^2+ \mathcal{O}_p(m_{D_{n}}^3)$$.
From the definition of $J_1$ in (\ref{eq:I1I2}), we can use a stochastic version of the Fubini theorem (see \cite{Jacod}, Theorem 5.44) to obtain
\begin{eqnarray*}
J_1
& = & \prod_{i=1}^n  \left[ 1 +  \int_{\R} \int_0^1 g(\alpha_i) \mu_{\bar{\omega}^{k}}(t-\alpha_i)  dN^i_{t} d\alpha_i - \frac{1}{2}\int_{\R} \int_0^1 g(\alpha_i) \mu_{\bar{\omega}^{k}}^2(t-\alpha_i)  dN^i_{t} d\alpha_i \right.\\
& & \left. + \frac{1}{2} \int_{\R} g(\alpha_i) \left( \int_0^1 \mu_{\bar{\omega}^{k}}(t-\alpha_i)  dN^i_{t} \right)^2 d\alpha_i + \mathcal{O}_p(m_{D_{n}}^3) \right],\\
& = & \prod_{i=1}^n  \left[ 1 +  \int_0^1 g\star\mu_{\bar{\omega}^{k}}(t) dN^i_{t} - \frac{1}{2}\int_0^1  g\star\mu_{\bar{\omega}^{k}}^2(t) dN^i_{t} \right. \\ 
& &+\frac{1}{2} \int_{\R} g(\alpha_i) \left( \int_0^1 \mu_{\bar{\omega}^{k}}(t-\alpha_i) dN^i_{t} \right)^2 d\alpha_i \left. + \mathcal{O}_p(m_{D_{n}}^3) \right].
\end{eqnarray*}
At this step, it will be more convenient to work with the logarithm of the term $J_1$. We have
\begin{eqnarray*}
\ln(J_1)
& = & \sum_{i=1}^n  \ln \left[ 1 +  \int_0^1 g\star\mu_{\bar{\omega}^{k}}(t) dN^i_{t} - \frac{1}{2}\int_0^1  g\star\mu_{\bar{\omega}^{k}}^2(t) dN^i_{t} \right. \\ &  & \left. + \frac{1}{2} \int_{\R} g(\alpha_i) \left( \int_0^1 \mu_{\bar{\omega}^{k}}(t-\alpha_i) dN^i_{t} \right)^2 d\alpha_i + \mathcal{O}_p(m_{D_{n}}^3) \right].
\end{eqnarray*}
Using again the second order expansion of the logarithm \eqref{eq:log}, we obtain that
\begin{eqnarray*}
\ln(J_1)
& = & \sum_{i=1}^n \left[ \int_0^1 g\star\mu_{\bar{\omega}^{k}}(t) dN^i_{t} - \frac{1}{2}\int_0^1  g\star\mu_{\bar{\omega}^{k}}^2(t)  dN^i_{t} \right. \\
& & \hspace{-1cm} \left.+ \frac{1}{2} \int_{\R} g(\alpha_i) \left( \int_0^1 \mu_{\bar{\omega}^{k}}(t-\alpha_i) dN^i_{t}  \right)^2 d\alpha_i - \frac{1}{2} \left( \int_0^1 g\star\mu_{\bar{\omega}^{k}}(t)  dN^i_{t} \right )^2 + \mathcal{O}_p(m_{D_{n}}^3) \right].
\end{eqnarray*}
Using similar arguments for the term $J_2$ defined in (\ref{eq:I1I2}), we obtain that
\begin{eqnarray*}
\ln(J_2)
& = & \sum_{i=1}^n \left[ \int_0^1 g\star\mu_{\omega}(t) dN^i_{t} - \frac{1}{2}\int_0^1  g\star\mu_{\omega}^2(t)  dN^i_{t} \right. \\
& & \hspace{-1cm} \left.+ \frac{1}{2} \int_{\R} g(\alpha_i) \left( \int_0^1 \mu_{\omega}(t-\alpha_i) dN^i_{t}  \right)^2 d\alpha_i - \frac{1}{2} \left( \int_0^1 g\star\mu_{\omega}(t)  dN^i_{t} \right )^2 + \mathcal{O}_p(m_{D_{n}}^3) \right].
\end{eqnarray*}
Combing the above equalities for $J_{1}$ and $J_{2}$, we obtain the following lower bound  for $\ln(\mathcal{Q}_{k,\omega}(N))$
\begin{eqnarray}
 \hspace{-1cm}  \ln(\mathcal{Q}_{k,\omega}(N)) & \geq & \ln(J_1) - \ln(J_2)+  \mathcal{O}_p(n m_{D_{n}}^3)  \nonumber \\
 & = &  \sum_{i=1}^n \big\{ \mathbb{E}_{\lambda_{\omega}}\left(\int_0^1 g\star\lbrace \mu_{\bar{\omega}^k}(t) - \mu_{\omega}(t)\rbrace dN^i_{t}\right) + \frac{1}{2} \| g\star\lambda_{\omega} \|^2 - \frac{1}{2} \| g\star\lambda_{\bar{\omega}^k} \|^2 \label{eq:bourin2} \\
 &  & +   \int_0^1 g\star\lbrace \mu_{\bar{\omega}^k}(t) - \mu_{\omega}(t) \rbrace dN^i_{t} - \mathbb{E}_{\lambda_{\omega}}\left(\int_0^1 g\star\lbrace \mu_{\bar{\omega}^k}(t) - \mu_{\omega}(t) \rbrace dN^i_{t}\right)  \label{eq:bourin1} \\
& &  + \frac{1}{2}\int_0^1  g\star\mu_{\omega}^2(t) dN^i_{t} - \frac{1}{2} \int_\R g(\alpha_i) \left( \int_0^1 \mu_{\omega}(t-\alpha_i) dN^i_{t} \right)^2 d\alpha_i  \label{eq:bourin3} \\
& & + \frac{1}{2} \int_\R g(\alpha_i) \left( \int_0^1 \mu_{\bar{\omega}^k}(t-\alpha_i) dN^i_{t} \right)^2 d\alpha_i - \frac{1}{2}\int_0^1  g\star\mu_{\bar{\omega}^k}^2(t) dN^i_{t}  \label{eq:bourin4} \\
& & \hspace{-1.5cm} - \frac{1}{2} \left( \int_0^1 g\star\mu_{\bar{\omega}^k}(t) dN^i_{t} \right )^2 + \frac{1}{2} \| g\star\lambda_{\bar{\omega}^k} \|^2 +\frac{1}{2} \left( \int_0^1 g\star\mu_{\omega}(t) dN^i_{t} \right )^2  -\frac{1}{2} \| g\star\lambda_{\omega} \|^2   \big\} \label{eq:bourin5} \\
& &  +\mathcal{O}_p(n m_{D_{n}}^3) \nonumber. 
\end{eqnarray}
In what follows, we will show that, for all sufficiently large $n$, the terms (\ref{eq:bourin2})-(\ref{eq:bourin5}) are bounded from below (in probability). Since $n m_{D_{n}}^3 \to 0$, this will imply that there exists  $c > 0$ (not depending on $\lambda_{\omega}$) and a constant $p(c) > 0$  such that for all sufficiently large $n$
$$
\PP_{\lambda_{\omega} }  \left( \ln \left( \mathcal{Q}_{k,\omega}(N) \right) \geq - c \right) = \PP_{\lambda_{\omega} }  \left( \mathcal{Q}_{k,\omega}(N) \geq \exp(-c) \right) \geq p(c)  > 0
$$
which is the result stated in Lemma \ref{calcul}. \\

\noindent \underline{Lower bound for (\ref{eq:bourin2})}: since for any $1 \leq i \leq n$
\begin{eqnarray*}
\mathbb{E}_{\lambda_{\omega} } \left(\int_0^1 g\star\lbrace \mu_{\bar{\omega}^k}(t) - \mu_{\omega}(t) \rbrace  dN^i_{t}\right) & = & \int_0^1 g\star\lbrace  \lambda_{\bar{\omega}^k}(t) - \lambda_{\omega}(t)\rbrace  \{g\star\lambda_{\omega}(t) \} dt.
\end{eqnarray*}
%Indeed
%$$ \int_0^1 \psi_{D,l}*g(t)dt = 0 \ \forall l\in \lbrace 1,\dots, 2^D \rbrace.$$
We obtain that 
\begin{eqnarray*}
 \sum_{i=1}^n \left[ \mathbb{E}_{\lambda_{\omega}}\left(\int_0^1 g\star\lbrace  \lambda_{\bar{\omega}^k}(t) - \lambda_{\omega}(t)\rbrace dN^i_{t} + \frac{1}{2} \| g\star\lambda_{\omega} \|^2 - \frac{1}{2} \| g\star\lambda_{\bar{\omega}^k} \|^2 \right) \right]  & = &  - \frac{n}{2} \| g\star\lbrace \mu_{\omega} - \mu_{\bar{\omega}^k} \rbrace \|^2 
\end{eqnarray*}
Remark that $\mu_{\omega} - \mu_{\bar{\omega}^k} = \pm \xi_D \psi_{Dk}$.  In what follows we will repeatidely use the following relation
 \begin{equation} \label{eq:psistarg}
\| \psi_{Dk}\star g \|^2 = \int_0^1 \left( \psi_{Dk}\star g(t) \right)^2dt = \sum_{\ell\in \Omega_D} | c_{\ell}( \psi_{Dk} ) |^2 |\gamma_{\ell}|^2 \asymp 2^{-2 D \nu}
\end{equation}
which follows from Assumption \ref{ass:gdecay} combined with Parseval's relation,  from the fact that $\# \Omega_D \asymp 2^D$ and that under Assumption \ref{ass:g} $|\gamma_{\ell}| \asymp 2^{-D \nu}$ for all $\ell \in \Omega_D$. Therefore
\begin{eqnarray*}
\| g\star \lbrace  \mu_{\omega} - \mu_{\bar{\omega}^k} \rbrace \|^2 = \xi_D^2 \int_0^1 \left( \psi_{Dk}\star g(t) \right)^2dt  \asymp \xi_D^2 2^{-2 D \nu} \asymp   n^{-1}. 
\end{eqnarray*}
Therefore, 
$$
- \sum_{i=1}^n \left[ \mathbb{E}_{\lambda_{\omega}}\left(\int_0^1 g\star \lbrace  \lambda_{\bar{\omega}^k}(t) - \lambda_{\omega}(t)\rbrace dN^i_{t} + \frac{1}{2} \| g\star \lambda_{\omega} \|^2 - \frac{1}{2} \| g\star \lambda_{\bar{\omega}^k} \|^2 \right) \right] \asymp 1,
$$
which implies that there exists a constant $0 < c_{0} < + \infty $ such that for all sufficiently large $n$ the deterministic term (\ref{eq:bourin2}) satisfies
$$
(\ref{eq:bourin2})  = \sum_{i=1}^n \left[ \mathbb{E}_{\lambda_{\omega}}\left(\int_0^1 g\star \lbrace  \lambda_{\bar{\omega}^k}(t) - \lambda_{\omega}(t)\rbrace dN^i_{t} + \frac{1}{2} \| g\star \lambda_{\omega} \|^2 - \frac{1}{2} \| g\star \lambda_{\bar{\omega}^k} \|^2 \right) \right]   \geq -c_{0}. 
$$ 

In the rest of the proof, we show that, for all sufficiently large $n$, the terms (\ref{eq:bourin1})-(\ref{eq:bourin5}) are bounded from below in probability. Without loss of generality, we consider only in what follows the case  $\mu_{\omega} - \mu_{\bar{\omega}^k} =  \xi_D \psi_{Dk}$.  \\

\noindent \underline{Lower bound for (\ref{eq:bourin1})}: rewrite first (\ref{eq:bourin1}) as
$$
(\ref{eq:bourin1}) =    -   \xi_D  \sum_{i=1}^n \int_0^1  g\star  \psi_{D,k}(t) d\tilde{N}^i_{t}, 
$$
where $d\tilde{N}^i_{t} = d\tilde{N}^i_{t} - \lambda(t-\tau_{i})dt$.
Then, using the fact that, conditonnaly to $\tau_{1},\ldots,\tau_{n}$, the counting process $\sum_{i=1}^n N^i $ is a Poisson process with intensity $\sum_{i=1}^{n} \lambda_{\omega}(t-\tau_{i})$, it follows from an analogue of Bennett's inequality for Poisson processes (see e.g.\   Proposition 7 in \cite{Reynaud}) that for any $y > 0$

%$$
%\PP \left( \left| \xi_D  \sum_{i=1}^n \int_0^1  g\star  \psi_{D,k}(t) d\tilde{N}^i_{t} \right| \leq  y \big| \tau_{1},\ldots,\tau_{n} \right) \geq 1-\exp\left(-\frac{y ^2}{2  \xi_D \int_{0}^{1} \sum_{i=1}^{n} | g\star  \psi_{D,k}(t) |^2 \lambda_{\omega}(t-\tau_{i})dt + \frac{2}{3} y \xi_D  \|  g\star  \psi_{D,k} \|_{\infty} } \right)
%$$

\begin{eqnarray*}
\PP \left( \left| \xi_D  \sum_{i=1}^n \int_0^1  g\star  \psi_{D,k}(t) d\tilde{N}^i_{t} \right| \right. & \leq  & \sqrt{2 y \xi^2_D \int_{0}^{1} \sum_{i=1}^{n} | g\star  \psi_{D,k}(t) |^2 \lambda_{\omega}(t-\tau_{i})dt}  \\
& & \left.  +  \frac{1}{3} y \xi_D  \|  g\star  \psi_{D,k} \|_{\infty} \big| \tau_{1},\ldots,\tau_{n} \right) \geq 1-\exp\left(-y \right)
\end{eqnarray*}
Since $  \int_{0}^{1} \sum_{i=1}^{n} | g\star  \psi_{D,k}(t) |^2 \lambda_{\omega}(t-\tau_{i})dt \leq   n  \| g\star  \psi_{D,k} \|^{2} \| \lambda_{\omega} \|_{\infty} $ for any $\tau_{1},\ldots,\tau_{n}$, it follows that for  $y = \log(2)$
%$$
%\PP \left( \left| \xi_D  \sum_{i=1}^n \int_0^1  g* \psi_{D,k}(t) d\tilde{N}^i_{t} \right| \leq  y  \right) \geq 1-\exp\left(-\frac{y ^2}{ 2 n \xi_D \| g* \psi_{D,k}(t) \|^{2} \| \lambda_{\omega} \|_{\infty} + \frac{2}{3} y \xi_D  \|  g* \psi_{D,k} \|_{\infty} } \right)
%$$
\begin{eqnarray*}
\PP \left( \left| \xi_D  \sum_{i=1}^n \int_0^1  g\star  \psi_{D,k}(t) d\tilde{N}^i_{t} \right| \right. & \leq  & \sqrt{2 \log(2) \xi^2_D  n  \| g\star  \psi_{D,k}(t) \|^{2} \| \lambda_{\omega} \|_{\infty} }    \left.  +  \frac{1}{3} \log(2) \xi_D  \|  g\star  \psi_{D,k} \|_{\infty}  \right) \geq 1/2.
\end{eqnarray*}
Now, using that $\xi^2_D  n  \| g\star  \psi_{D,k}(t) \|^{2}  \| \lambda_{\omega} \|_{\infty}  \asymp 1$ and $\xi_D  \|  g\star  \psi_{D,k} \|_{\infty} \leq  \| \psi \|_{\infty} 2^{D/2} \xi_D \to 0$, it follows that there exists a constant $c_{1} > 0$ such that for all sufficiently large $n$
\begin{equation}
 \PP \left(  \left| (\ref{eq:bourin1})    \right|  \leq   c_{1}  \right) = \PP \left( \left| \xi_D  \sum_{i=1}^n \int_0^1  g\star  \psi_{D,k}(t) d\tilde{N}^i_{t} \right|  \leq   c_{1}  \right) \geq 1/2.
\end{equation}

\noindent \underline{Lower bound for (\ref{eq:bourin3}) and (\ref{eq:bourin4})}: define
\begin{eqnarray*}
X_{i} & = &  \frac{1}{2}\int_0^1  g\star \mu_{\omega}^2(t) dN^i_{t} - \frac{1}{2}\int_0^1  g\star \mu_{\bar{\omega}^k}^2(t) dN^i_{t} \\
& & + \frac{1}{2} \int_\R g(\alpha_i) \left( \int_0^1 \mu_{\bar{\omega}^k}(t-\alpha_i) dN^i_{t} \right)^2 d\alpha_i - \frac{1}{2} \int_\R g(\alpha_i) \left( \int_0^1 \mu_{\omega}(t-\alpha_i) dN^i_{t} \right)^2 d\alpha_i,
\end{eqnarray*}
and note that $(\ref{eq:bourin3}) + (\ref{eq:bourin4}) = \sum_{i=1}^{n} X_{i}$.
For any $1 \leq i \leq n$
\begin{eqnarray*}
\E_{\lambda_{\omega}} X_{i} & = & \frac{1}{2} \int_\R g(\alpha_i) \left(  \left(  \int_0^1 \mu_{\bar{\omega}^k}(t-\alpha_i) g \star  \lambda_{\omega}(t) dt \right)^2 -  \left(  \int_0^1 \mu_{\omega}(t-\alpha_i) g \star  \lambda_{\omega}(t) dt \right)^2\right) d\alpha_i \\
& = &  \frac{1}{2} \int_\R g(\alpha_i) \left(    \left( \int_0^1  - \xi_{D} \psi_{D,k}(t-\alpha_i)  g \star  \lambda_{\omega}(t) dt \right)  \left(  \int_0^1 \left( \mu_{\omega}(t-\alpha_i) - \mu_{\bar{\omega}^k}(t-\alpha_i) \right) g \star  \lambda_{\omega}(t) dt \right)   \right) d\alpha_i \\
&  = &  \frac{1}{2} \int_\R g(\alpha_i) \left(    \left( \int_0^1  - \xi_{D} \psi_{D,k}(t-\alpha_i)  g \star  \mu_{\omega}(t) dt \right)   \left(  \int_0^1 \left( \mu_{\omega}(t-\alpha_i) - \mu_{\bar{\omega}^k}(t-\alpha_i) \right) g \star  \mu_{\omega}(t) dt \right)   \right) d\alpha_i
\end{eqnarray*}
which implies that
$$
\left| \E_{\lambda_{\omega}} X_{i} \right|  \leq  \frac{1}{2}   \xi_{D} 2^{D/2} \|  \psi \|_{\infty}  \|  \mu_{\omega} \|_{\infty}^2\left( \|  \mu_{\omega} \|_{\infty} + \|  \mu_{\bar{\omega}^k} \|_{\infty} \right) \asymp m_{D}^4.
$$
Therefore
$
\sum_{i=1}^{n} \E_{\lambda_{\omega}} X_{i} \to 0 \mbox{ as } n \to + \infty,
$
since $n m_{D}^4 \to 0 $.
Now, remark that $X_{1},\ldots,X_{n}$ are i.i.d variables satisfying for all $1 \leq i \leq n$
\begin{equation} \label{eq:boundXi}
|X_{i}| \leq \frac{1}{2} (\| \mu_{\omega} \|_{\infty}^2 + \| \mu_{\bar{\omega}^k} \|_{\infty}^2 )(K_{i} + K_{i}^2) \leq  2 c^2(\psi) m^2_{D_{n}}  (K_{i} + K_{i}^2)
\end{equation}
where $K_{i} =  \int_0^1  dN^i_{t}$. Conditionally to $\tau_{i}$, $K_{i}$ is a Poisson variable with intensity $\int_{0}^{1} \lambda_{\omega}(t-\tau_{i}) dt = \int_{0}^{1} \lambda_{\omega}(t)dt = \| \lambda_{\omega} \|_{1} $. Hence, the bound \eqref{eq:ineqmu}  for $ \| \lambda_{\omega} \|_{\infty}$ and inequality \eqref{eq:boundXi} implies that there exists a constant $C > 0$ (not depending on $\lambda_{\omega}$)  such that
$$
 \E X_{1}^{2} \leq C m^4_{D_{n}},
$$
which implies that $\var( \sum_{i=1}^{n} X_{i})  = n \var(X_{1}) \leq n  \E X_{1}^{2}  \to 0$ as $n \to + \infty $ since $n m_{D}^4 \to 0 $. Therefore, $(\ref{eq:bourin3}) + (\ref{eq:bourin4}) =\sum_{i=1}^{n} X_{i}$ converges to zero in probability as $n \to + \infty $ using Chebyshev's inequality. \\

\noindent \underline{Lower bound for (\ref{eq:bourin5})}:
we denote by $S_i$ the difference 
$$
S_i := 2 \left( - \frac{1}{2} \left( \int_0^1 g\star \mu_{\bar{\omega}^k}(t) dN^i_{t} \right )^2 + \frac{1}{2} \| g\star \lambda_{\bar{\omega}^k} \|^2 +\frac{1}{2} \left( \int_0^1 g\star \mu_{\omega}(t) dN^i_{t} \right )^2 -\frac{1}{2} \| g\star \lambda_{\omega} \|^2 \right),
$$
and remark that $(\ref{eq:bourin5}) = \frac{1}{2} \sum_{i=1}^{n} S_i $.
First, remark that 
\begin{eqnarray*}
\mathbb{E}_{\lambda_{\omega}}  S_i& =  &\| g\star \lambda_{\bar{\omega}^k} \|^2-\| g\star \lambda_{\omega} \|^2 + \int_{0}^1 (g\star \mu_{\omega})^2(t) g\star \lambda_{\omega}(t ) dt - \int_{0}^1 (g\star \mu_{\bar{\omega}^k})^2(t) g\star  \lambda_{\omega}(t) dt  \\
& & + \int_{\R} g(\tau_{i}) \left( \left\{\int_{0}^1 (g\star \mu_{\omega})(t) \lambda_{\omega}(t-\tau_i) dt \right\}^2 - \left\{\int_{0}^1 (g\star \mu_{\bar{\omega}^k})(t) \lambda_{\omega}(t-\tau_i) dt \right\}^2 \right) d\tau_{i}.
\end{eqnarray*}
%Moreover, remark that $g * \lambda_{\omega} = 1+\xi_D 2^{D/2} c(\psi) + \sum_{i=0}^{2^D-1} \omega_i g * \psi_{D,i}$, thus the Parseval's equality yields
%\begin{eqnarray*}
%\| g*\lambda_{\bar{\omega}^k} \|^2-\| g*\lambda_{\omega} \|^2 & =& \sum_{\ell \in \mathbb{Z}} |c_{\ell}(g*\lambda_{\bar{\omega}^k})|^2 - |c_{\ell}(g*\lambda_{\omega})|^2\\
%&  = & |c_0(g*\lambda_{\bar{\omega}^k})|^2 - |c_0(g*\lambda_{\omega})|^2 \\
%& &  + \sum_{\ell \in \mathbb{Z}^{\ast}} \left|c_{\ell}\left(\sum_{i=0}^{2^D-1} \omega_i g * \psi_{D,i}\right)\right|^2 - \left|c_{\ell}\left(\sum_{i=0}^{2^D-1} \bar{\omega}^k_i g * \psi_{D,i}\right)\right|^2.
%\end{eqnarray*}
%Since $c_0(g*\lambda_{\bar{\omega}^k}) = c_0(g*\lambda_{\omega})$, one  gets 
%\begin{eqnarray*}
%\| g*\lambda_{\bar{\omega}^k} \|^2-\| g*\lambda_{\omega} \|^2 & =&  \sum_{\ell \in \mathbb{Z}^{\ast}} \left|c_{\ell}\left(\sum_{i=0}^{2^D-1} \omega_i g * \psi_{D,i}\right)\right|^2 - \left|c_{\ell}\left(\sum_{i=0}^{2^D-1} \bar{\omega}^k_i g * \psi_{D,i}\right)\right|^2.
%\end{eqnarray*}
%which implies that
Since
$
\| g\star \mu_{\bar{\omega}^k} \|^2-\| g\star \mu_{\omega} \|^2 = \| g\star \lambda_{\bar{\omega}^k} \|^2-\| g\star \lambda_{\omega} \|^2
$
and
$
g\star \lambda_{\omega} = 1 + g\star \mu_{\omega}
$
it follows that
\begin{eqnarray*}
\mathbb{E}_{\lambda_{\omega}}  S_i& =  &  \underbrace{  \int_{0}^1(g\star \mu_{\omega})^2(t) g\star \mu_{\omega}(t) dt - \int_{0}^1(g\star \mu_{\bar{\omega}^k})^2(t) g\star \mu_{\omega}(t) dt}_{S_{i,1}}\\
 & & +\underbrace{  \int_{\R} g(\tau_{i}) \left(  \left\{\int_{0}^1 (g\star \mu_{\omega})(t) \lambda_{\omega}(t-\tau_i) dt \right\}^2 - \left\{\int_{0}^1 (g\star \mu_{\bar{\omega}^k})(t) \lambda_{\omega}(t-\tau_i) dt \right\}^2 \right) d\tau_{i} }_{S_{i,2}}.
\end{eqnarray*}
One has that
$$
|S_{i,1} | \leq \| \mu_{\omega} \|_{\infty}^3 + \| \mu_{\bar{\omega}^k} \|_{\infty}^2 \| \mu_{\omega} \|_{\infty}   \leq 16 c^3(\psi) m_{D_{n}}^3,
$$
and that
\begin{eqnarray*}
S_{i,2} & =  &   \xi_{D}^2  \int_{\R} g(\tau_{i}) \left(   \left( \int_{0}^1 g\star \psi_{D,k} (t) \lambda_{\omega}(t-\tau_i) dt \right) \left( \int_{0}^1 g\star (\mu_{\omega} + \mu_{\bar{\omega}^k})(t) \lambda_{\omega}(t-\tau_i) dt \right) \right)  d\tau_{i}
\end{eqnarray*}
Hence using \eqref{eq:ineqmu} and  \eqref{eq:psistarg}  it follows that there exists a constant $C > 0$ such that for all sufficiently large $n$
$$
|S_{i,2}| \leq \xi_{D}^2 \| g\star \psi_{D,k} \| \left( \| \mu_{\omega} \|_{\infty} +  \| \mu_{\bar{\omega}^k} \|_{\infty}   \right) \leq C n^{-\frac{3s + \nu +1}{2s + 2 \nu +1}}
$$
Then, since $ s > 2 \nu +1 > \nu$ it follows that
$$
\sum_{i=1}^{n} \mathbb{E}_{\lambda_{\omega}}  S_i = \mathcal{O}\left( n^{-\frac{(s - 2\nu -1)}{2s + 2 \nu +1}} + n^{-\frac{(s - \nu)}{2s + 2 \nu +1}} \right) \to 0.
$$
Now, note that $\var( \sum_{i=1}^{n}  S_i ) = n \var(Y_{1})$ where
$$
Y_{1} =     \left( \int_0^1 g\star \mu_{\omega}(t) dN^1_{t} \right )^2 - \left( \int_0^1 g\star \mu_{\bar{\omega}^k}(t) dN^1_{t} \right )^2 .
$$
Since $| Y_{1}  | \leq \left( \|   \mu_{\omega} \|^{2}_{\infty} +   \| \mu_{\bar{\omega}^k} \|^{2}_{\infty} \right) K_{1}^2$ with $K_{1} =  \int_0^1 dN^1_{t}$ being, conditionally to $\tau_{1}$, a Poisson variable with intensity $\int_{0}^{1} \lambda_{\omega}(t-\tau_{1}) dt = \int_{0}^{1} \lambda_{\omega}(t)dt = \| \lambda_{\omega} \|_{1} $. Therefore, using \eqref{eq:ineqmu}, it follows that there exists a constant $C > 0$ (not depending on $\lambda_{\omega}$)   such that
$$
\var( \sum_{i=1}^{n}  S_i ) = n \var(Y_{1}) \leq n \E Y_{1}^2 \leq C n  m_{D}^4 \to 0.
$$
Therefore,  using Chebyshev's inequality, we obtain that  $(\ref{eq:bourin5}) = \frac{1}{2} \sum_{i=1}^{n} S_i $ converges to zero in probability as $n \to + \infty $, which ends the proof of the lemma. \hfill $\square$ 

%Note that this relation is also true when one integrates with respect to $\tau_i$ and we obtain $\mathbb{E}(S_i) = \mathcal{O}_p(m_{D_{n}}^3).$
%Now, it is sufficient to use the Chebychev inequality inequality to obtain:
%\begin{eqnarray*}
%P\left(\left| \sum_{i=1}^n S_i - \mathbb{E} S_i \right| \geq \delta\right) \leq \frac{n Var(S_1)}{\delta^2}.
%\end{eqnarray*}
%At last, using the moment identities for Poisson integrals (see \cite{privault}, Lemma 1 for instance), one has
%\begin{eqnarray*}
%\mathbb{E}_{\tau_i} \left( \int_0^1 g*\mu_{\bar{\omega}^k}(t) dN^i_{t} \right )^4 &=& \int_{0}^1 g*\mu_{\bar{\omega}^k}(t)^4 \lambda(t-\tau_i) dt \\
%& &  + 3 \left\{\int_{0}^1 g*\mu_{\bar{\omega}^k}(t) \lambda(t-\tau_i) dt\right\} \left\{ \int_{0}^1 (g*\mu_{\bar{\omega}^k}(t))^3 \lambda(t-\tau_i) dt \right\}
%\\ & & + 3 \left\{\int_{0}^1 g*\mu_{\bar{\omega}^k}^2(t) \lambda(t-\tau_i) dt\right\}^2 \\
%& = & \mathcal{O}_p(m_{D_{n}}^4).
%\end{eqnarray*}
%A similar formula is available for $\mathbb{E}_{\tau_i} \left( \int_0^1 g*\mu_{\omega}(t) dN^i_{t} \right )^4$ and we obtain  $P(\sum_{i=1}^n S_i > \delta) \to 0$. 

\end{proof}

\subsection{Lower bound on $B_{s}^{p,q}(A)$}

By applying inequality \eqref{eq:boundS} and Lemma \ref{calcul}, we obtain that there exists  $0 < \delta < 1$ such that  for all sufficiently large $n$
$$
\inf_{\hat \lambda_{n}} \sup_{\lambda \in S_D(A)} \mathbb{E}_{\lambda} \| \hat \lambda_{n} - \lambda \|^2_{2} \geq C \xi_{D_{n}}^2 2^{D_{n}},
$$
for some  constant $C > 0$ that is independent of $D_{n}$. From the definition \eqref{eq:fonctionstests} of $\xi_D$ and using the choice \eqref{eq:Dn} for $D_{n}$, we obtain that
$$
\inf_{\hat \lambda_{n}} \sup_{\lambda \in S_D(A)} \mathbb{E}_{\lambda} \| \hat \lambda_{n} - \lambda \|^2_{2} \geq  C \xi_{D_{n}}^2 2^{D_{n}} \asymp 2^{-2s D_{n}}  \asymp n^{-\frac{2s}{2s+2\nu+1}}.
$$
Now, since $S_{D}(A) \subset B_{p,q}^{s}(A)$  for any $D \geq 1$ we obtain from the above inequalities that there exists a constant $C_{0} > 0$ such that for all sufficiently large $n$
\begin{eqnarray*}
\inf_{\hat \lambda_{n}} \sup_{\lambda \in B_{p,q}^{s}(A)  \bigcap  \Lambda_{0}} n^{\frac{2s}{2s+2\nu+1}} \mathbb{E}_{\lambda} \| \hat \lambda_{n} - \lambda \|^2_{2}
& \geq & \inf_{\hat \lambda_{n}} \sup_{\lambda \in S_{D_{n}}(A)} \mathbb{E}_{\lambda} \| \hat \lambda - \lambda \|^2_{2},\\
& \geq &  C_{0} n^{-\frac{2s}{2s+2\nu+1}},
\end{eqnarray*}
which concludes the proof of Theorem \ref{th:borneinf}.\hfill $\square$

\bibliographystyle{plain}
\bibliography{poisson_shift}

\begin{thebibliography}{10}

\bibitem{MR2291495}
A.~Antoniadis and J.~Bigot.
\newblock Poisson inverse problems.
\newblock {\em Ann. Statist.}, 34(5):2132--2158, 2006.

\bibitem{MR2676894}
J.~Bigot and S.~Gadat.
\newblock A deconvolution approach to estimation of a common shape in a shifted
  curves model.
\newblock {\em Ann. Statist.}, 38(4):2422--2464, 2010.

\bibitem{MR2727451}
J.~Bigot, S.~Gadat, and C.~Marteau.
\newblock Sharp template estimation in a shifted curves model.
\newblock {\em Electron. J. Stat.}, 4:994--1021, 2010.

\bibitem{Bremaud}
P.~Br\'emaud.
\newblock {\em Point processes and Queues, Martingale Dynamics}.
\newblock Springer series in Statistics, 1981.

\bibitem{cavgopitsy}
L.~Cavalier, G.~Golubev, D.~Picard, and A.~Tsybakov.
\newblock Oracle inequalities for inverse problems.
\newblock {\em Ann. Statist.}, 30(3):843--874, 2002.
\newblock Dedicated to the memory of Lucien Le Cam.

\bibitem{MR1930348}
L.~Cavalier and J.-Y. Koo.
\newblock Poisson intensity estimation for tomographic data using a wavelet
  shrinkage approach.
\newblock {\em IEEE Trans. Inform. Theory}, 48(10):2794--2802, 2002.

\bibitem{MR1268002}
D.~L. Donoho.
\newblock Nonlinear wavelet methods for recovery of signals, densities, and
  spectra from indirect and noisy data.
\newblock In {\em Different perspectives on wavelets ({S}an {A}ntonio, {TX},
  1993)}, volume~47 of {\em Proc. Sympos. Appl. Math.}, pages 173--205. Amer.
  Math. Soc., Providence, RI, 1993.

\bibitem{MR2649451}
F.-X. Dup{\'e}, J.~M. Fadili, and J.-L. Starck.
\newblock A proximal iteration for deconvolving {P}oisson noisy images using
  sparse representations.
\newblock {\em IEEE Trans. Image Process.}, 18(2):310--321, 2009.

\bibitem{HKPT}
W.~Hardle, G.~Kerkyacharian, D.~Picard, and A.~Tsybakov.
\newblock {\em Wavelets, Approximation and. Statistical Applications.}, volume
  129.
\newblock Lecture Notes in Statistics, New York: Spriner-Verlag, 1998.

\bibitem{Jacod}
J.~Jacod.
\newblock {\em Calcul stochastique et probl\`emes de martingales}.
\newblock Springer Verlag, 1979.

\bibitem{johnstone}
I.~M. Johnstone.
\newblock Function estimation in gaussian noise: Sequence models.
\newblock {\em Unpublished Monograph, http://www-stat.stanford.edu/$\tilde{
  }$imj/}, 2002.

\bibitem{JKPR}
I.~M. Johnstone, G.~Kerkyacharian, D.~Picard, and M.~Raimondo.
\newblock Wavelet deconvolution in a periodic setting.
\newblock {\em J. Roy. Statist. Soc. Ser. B}, 66:547--573, 2004.

\bibitem{poisson-procs}
J.~F.~C. Kingman.
\newblock {\em Poisson Processes}, volume~3 of {\em Oxford Studies in
  Probability}.
\newblock Oxford University Press, Oxford, 1993.

\bibitem{MR1678884}
E.~D. Kolaczyk.
\newblock Wavelet shrinkage estimation of certain {P}oisson intensity signals
  using corrected thresholds.
\newblock {\em Statist. Sinica}, 9(1):119--135, 1999.

\bibitem{massart}
P.~Massart.
\newblock {\em Concentration Inequalities and Model Selection: Ecole d'été de
  Probabilités de Saint-Flour XXXIII - 2003}.
\newblock Lecture Notes in Mathematics Springer, 2006.

\bibitem{Meyer}
Y.~Meyer.
\newblock {\em Ondelettes et opérateurs, I}.
\newblock Hermann, 1989.

\bibitem{MR1790322}
R.~D. Nowak and E.~D. Kolaczyk.
\newblock A statistical multiscale framework for {P}oisson inverse problems.
\newblock {\em IEEE Trans. Inform. Theory}, 46(5):1811--1825, 2000.
\newblock Information-theoretic imaging.

\bibitem{MR2488345}
M.~Pensky and T.~Sapatinas.
\newblock Functional deconvolution in a periodic setting: uniform case.
\newblock {\em Ann. Statist.}, 37(1):73--104, 2009.

\bibitem{Reynaud}
P.~Reynaud-Bourret.
\newblock Adaptive estimation of the intensity of inhomogeneous poisson
  processes via concentration inequalities.
\newblock {\em Probability Theory and Related Fields}, 126:103--153, 2003.

\bibitem{Reynaud2}
P.~Reynaud-Bourret and V.~Rivoirard.
\newblock Near optimal thresholding estimation of a poisson intensity on the
  real line.
\newblock {\em Electronic journal of statistics}, 4:171--238, 2010.

\bibitem{MR0440354}
H.~P. Rosenthal.
\newblock On the span in {$L^{p}$} of sequences of independent random
  variables. {II}.
\newblock In {\em Proceedings of the {S}ixth {B}erkeley {S}ymposium on
  {M}athematical {S}tatistics and {P}robability ({U}niv. {C}alifornia,
  {B}erkeley, {C}alif., 1970/1971), {V}ol. {II}: {P}robability theory}, pages
  149--167, Berkeley, Calif., 1972. Univ. California Press.

\bibitem{bioman}
J.~Wang, A.~Huda, V.~Lunyak, and I.~Jordan.
\newblock A gibbs sampling strategy applied to the mapping of ambiguous
  short-sequence tags.
\newblock {\em Bioinformatics}, 26:2501--2508, 2010.

\bibitem{MR2417680}
R.~M. Willett and R.~D. Nowak.
\newblock Multiscale {P}oisson intensity and density estimation.
\newblock {\em IEEE Trans. Inform. Theory}, 53(9):3171--3187, 2007.

\bibitem{MR2516691}
B.~Zhang, J.~M. Fadili, and J.-L. Starck.
\newblock Wavelets, ridgelets, and curvelets for {P}oisson noise removal.
\newblock {\em IEEE Trans. Image Process.}, 17(7):1093--1108, 2008.

\bibitem{superman}
Y.~Zhang, T.~Liu, C.~Meyer, J.~Eeckhoute, D.~Johnson, B.~Bernstein,
  C.~Nussbaum, R.~Myers, M.~Brown, W~Li, and X.~Liu.
\newblock Model-based analysis of chip-seq (macs).
\newblock {\em Genome Biology}, 9, 2008.

\end{thebibliography}

\end{document}